\theoremstyle{plain}
\numberwithin{equation}{section}
\newtheorem{thm}{Theorem}[section]
\newtheorem*{thm*}{Theorem}
\newtheorem{prop}[thm]{Proposition}
\newtheorem{prop-dfn}[thm]{Proposition-Definition}
\newtheorem{prop'}[thm]{``Proposition"}
\newtheorem{cor}[thm]{Corollary}
\newtheorem{lem}[thm]{Lemma}
\theoremstyle{definition}
\newtheorem{dfn}[thm]{Definition}
\newtheorem{note}[thm]{Notation}
\newtheorem{rmk}[thm]{Remark}
\newtheorem{prb}[thm]{Problem}
\def\rank{\mathop{\mathrm{rank}}\nolimits}
\def\dim{\mathop{\mathrm{dim}}\nolimits}
\def\Im{\mathop{\mathrm{Im}}\nolimits}
\def\Hom{\mathop{\mathrm{Hom}}\nolimits}
\def\fib{\mathop{\mathsf{fib}}\nolimits}
\def\cof{\mathop{\mathsf{cof}}\nolimits}
\def\ker{\mathop{\mathsf{ker}}\nolimits}
\def\cok{\mathop{\mathsf{cok}}\nolimits}
\def\im{\mathop{\mathsf{im}}\nolimits}
\def\K{{\mathit{Kom}}}
\def\Map{{\mathrm{Map}}}
\def\<{{\langle}}
\def\>{{\rangle}}
\def\+{\mathop{\oplus}\nolimits}
\def\1{\mathop{\mathrm{id}}\nolimits}
\def\Spec{\mathop{\mathrm{Spec}}\nolimits}
\newcommand{\Aut}[1]{{\mathrm{Aut}\,{#1}}}
\newcommand{\Dom}[1]{{\mathrm{Dom}({#1}^{-1})}}
\newcommand{\gl}[2]{{\mathsf{gl}\left({#1} ,  {#2}\right)}}
\newcommand{\Stab}[1]{{\mathsf{Stab}\,{#1}}}
\newcommand{\Stabd}[1]{{\mathsf{Stab}^{\dagger}{#1}}}
\newcommand{\Stabr}[1]{{\mathsf{Stab}^{\mathsf{r}}\,{#1}}}
\newcommand{\Fun}[2]{{\mr{Fun}( {#1}, {#2})}}
\newcommand{\ho}[1]{{\mr{h}{(#1)}}}
\newcommand{\pair}[2]{{\left( {#1},{#2} \right)}}
\newcommand{\HNF}[3]{{
\xymatrix{
0	\ar[r]	&	{#2}_1	\ar[r]\ar[d]	&	{#2}_2	\ar[r]\ar[d]	&	\cdots\ar[r]	&	{#2}_{n-1}	\ar[r]\ar[d]	&	{#2}_n={#1}\ar[d]	\\
			&	{#3}_1\ar@{-->}[ul]	&	{#3}_2\ar@{-->}[ul] &					&	{#3}_{n-1}\ar@{-->}[ul]		&	{#3}_n .\ar@{-->}[ul]	
}
}}
\newcommand{\bb}[1]{{\mathbb{#1}}}
\newcommand{\mca}[1]{{\mathcal{#1}}}
\newcommand{\mr}[1]{{\mathrm{#1}}}
\newcommand{\ms}[1]{{\mathscr{#1}}}
\newcommand{\mb}[1]{{\mathbf{#1}}}
\title[Stability conditions on morphisms in a category]{Stability conditions on morphisms in a category}
\author{Kotaro Kawatani}
\email{kawatanikotaro@gmail.com}
\address{Yamato University, Katayama-cho, Suita-city, Osaka, Japan. Osaka }
\address{Osaka Prefecture University, Gakuen-cho, Sakai-city, Osaka, Japan. Osaka }
\keywords{Triangulated categories, Stability conditions, Functor category, Stable infinity category}
\date{\today}
\subjclass[2010]{18E30}
\begin{document}
\maketitle

\begin{abstract}
Let $\mathrm{h}\mathscr{C}$ be the homotopy category of a stable infinity category $\mathscr{C}$. 
Then the homotopy category $\mathrm{h}\mathscr{C}^{\Delta^{1}}$ of morphisms in the stable infinity category $\mathscr{C}$ is also triangulated. 
Hence the space $\mathsf{Stab}\,{ \mathrm{h}\mathscr{C}^{\Delta^{1}}}$ of stability conditions on $\mathrm{h}\mathscr{C}^{\Delta^{1}}$ is well-defined though the non-emptiness of $\mathsf{Stab}\,{ \mathrm{h}\mathscr{C}^{\Delta^{1}}}$ is not obvious. 
Our basic motivation is a comparison of the homotopy type of $\Stab{\mathrm{h}\mathscr{C}}$ and that of $\Stab{\mathrm{h}\mathscr{C}^{\Delta^{1}}}$. 
Under the motivation we show that functors $d_{0}$ and $d_{1} \colon \mathscr{C}^{\Delta^{1}} \rightrightarrows  \mathscr{C}$ induce continuous maps from $\Stab {\mathrm{h}\mathscr{C}}$ to $\Stab{\mathrm{h}\mathscr{C}^{\Delta^{1}}}$ contravariantly where $d_{0}$ (resp. $d_{1}$) takes a morphism to the target (resp. source) of the morphism. 
As a consequence, if $\Stab{\mathrm{h}\mathscr{C}}$ is nonempty then so is $\Stab{\mathrm{h}\mathscr{C}^{\Delta^{1}}}$. 
Assuming $\mathscr{C}$ is the derived infinity category of the projective line over a field, we further study basic properties of $d_{0}^{*} $ and $d_{1}^{*}$. 
In addition, we give an example of a derived category which does not have any stability condition. 
\end{abstract}

\section{Introduction}

Let $\mb D$ be a triangulated category. 
The space $\Stab {\mb D}$ of locally finite stability conditions on $\mb D$ was introduced by Bridgeland \cite{MR2373143}. 
One of remarkable features of $\Stab {\mb D}$ is that each connected component of $\Stab {\mb D}$ is a complex manifold  unless $\Stab{\mb D}$ is empty\footnote{In this article, an empty set is not a complex manifold. }. 
In general the non-emptiness of $\Stab {\mb D}$ is not obvious and the connectedness is an open problem. 
For instance, if $\mb D$ is the bounded derived category $\mb D^{b}(\mb{coh}\, \bb A^{1}_{\mb k})$ of coherent sheaves on the affine line over a field $\mb k$, 
$\Stab{\mb D^{b}(\mb{coh}\, \bb A^{1}_{\mb k})}$ is empty by Proposition \ref{pr:empty}.

If a scheme is projective, there are many non-empty examples. 
Let $\mb D^{b}(\mb{coh}\, M)$ be the bounded derived category of coherent sheaves on a smooth projective variety $M$. 
If $\dim M=1$, the space $\Stab {\mb D^{b}(\mb{coh}\, M)}$ of stability conditions on $\mb D^{b}(\mb{coh}\, M)$ is not empty and is connected by \cite{MR2373143}, \cite{MR2335991}, and \cite{MR2219846}. 
If $\dim M=2$, $\Stab{\mb D^{b}(\mb{coh}\, M)}$ is non-empty by \cite{MR2998828} and the connectedness is open.  
If $\dim M=3$, Bayer-Macri-Toda \cite{MR3121850} shows that a generalized Bogomolov-Gieseker type inequality implies the non-emptiness of $\Stab{\mb D^{b}(\mb{coh}\,M)}$.

It is very difficult to describe $\Stab {\mb D}$ globally. 
Concerned with the description the following working hypothesis states that $\Stab {\mb D}$ should be globally simple in a homotopical view: 
\begin{description}
\item[Hypothesis]The space $\Stab {\mb D}$ of stability conditions on $\mb D$ is contractible unless $\Stab{\mb D}$ is empty. 
\end{description}
If $\mb D  = \mb D^{b}(\mb{coh}\, M)$ with $\dim M=1$, then the hypothesis holds by \cite{MR2373143}, \cite{MR2335991} and \cite{MR2219846}.  Moreover there are no counter-example to the best of my knowledge. 

Specializing $\mb D$, we can specify the origin of the hypothesis. 
Let $X$ be the minimal resolution of a Kleinian singularity and $Z$ the schematic fiber of the singularity. 
Suppose that $\mb D$ is the category $\mb D_{Z}^{b}(\mb{coh}\, X)$ spanned by bounded complexes in $\mb D^{b}(\mb{coh}\, X)$ supported in $Z$. 
The space $\Stab{\mb D_{Z}^{b}(\mb{coh}\, X)}$ is conjecturally\footnote{If the singularity is type $A_{n}$ then $\Stab{\mb D^{b}_{Z}(\mb{coh}\, X)}$ is actually the universal cover by \cite{MR2629510}. } the universal covering space over a certain configuration space (see also \cite{MR2376815} and \cite{MR2549952}). 
It seems natural to expect that $\Stab{\mb D}$ is contractible in general since the configuration space should be an Eilenberg MacLane space by the $K(\pi,1)$ conjecture deriving from Brieskron \cite{MR0422674}  and Arnol'd (see also \cite{MR3205598}).

Stimulated by the hypothesis, it would be interesting to study the homotopy type of $\Stab{\mb D}$. 
Our basic motivation is 
a comparison of homotopy types of the spaces of stability conditions on $\mb D$ and on the category $\mr{Mor}(\mb D)$ of morphisms in $\mb D$. 
Unfortunately the category $\mr{Mor}(\mb D)$ is not triangulated in general, 
so we start with a stable infinity category (see also \S \ref{sc:stable}) which is a candidate of ``enhanced'' triangulated categories.

Let $\ms C$ be a stable infinity category. 
Then the infinity category $\ms C^{\Delta^{1}}$ of morphisms in $\ms C$ is also stable. 
Since the homotopy category of a stable infinity category is triangulated, we could introduce a triangulated structure on $\ho{\ms C^{\Delta^{1}}}$. 
Thus the space of stability conditions on $\ho {\ms C^{\Delta^1}}$ is well-defined though the non-emptiness of $\Stab {\ho {\ms C^{\Delta^{1}}}}$ is not obvious. 
The following is a natural question motivated by the hypothesis: 
 
\begin{prb}\label{problem1}
Is $\Stab {\ho {\ms C^{\Delta^{1}}}}$ homotopy equivalent to $\Stab {\ho {\ms C}}$?
\end{prb}
If the answer of Problem \ref{problem1} is negative we might find an interesting counter-example of the hypothesis. 
We note that the answer to the easiest case of the problem is affirmative. 
Precisely if $\ms C$ is the infinity category $\ms D_{\mr{coh}}^{b}(\Spec \mb k)$ of $\Spec \mb k$ (see Definition \ref{dfn:derivedmugen}) then $\Stab {\ho {\ms C^{\Delta^{1}}}}$ is homotopy equivalent to 
$\Stab{ \ho {\ms C} }$ by Proposition \ref{pr:easiest}.

It is difficult to generalize the argument of the easiest case since the answer comes from calculating $\Stab {\ho {\ms C}}$ and $\Stab {\ho {\ms C^{\Delta^{1}}}}$ independently.  
Our aim is a construction of maps between $\Stab {\ho {\ms C}}$ and $\Stab {\ho {\ms C^{\Delta^{1}}}}$ to study Problem \ref{problem1} in more general cases. 

Before main theorem, let us recall that there exist functors 
$d_0$ and $d_1 \colon \xymatrix{ \ms C^{\Delta^{1}}	\ar@<0.6ex>[r] \ar@<-0.6ex>[r] 	&  \ms C	}$ which take a morphism $f$ in $\ms C$ to the target and to the source of $f$ respectively. 
Though an exact functor between triangulated categories does not induce a map between the spaces of stability conditions in general, we show that both functors $d_0$ and $d_1$ respectively induce continuous maps $d_0^*$ and $d_1^*$ contravariantly. 
 The following theorem states some basic properties of these continuous maps. 

\begin{thm}\label{thm-main1}
Let $\ms  C$ be  a stable infinity category. 
Assume that the rank of the Grothendieck group $K_0(\ho{\ms C})$ is finite. 
\begin{enumerate}
\item \label{main1-1}If there exists a reasonable stability condition on $\ho{\ms C}$ then there exists a reasonable stability condition on $\ho{\ms C^{\Delta^{1}}}$. 
Moreover both functors $d_0$ and $d_1$ induce continuous and injective maps $d_0^*$ and $d_1^*$ from the space $\Stabr {\ho {\ms C}}$ of reasonable stability conditions on $\ho{\ms C}$ to that of $\ho {\ms C^{\Delta^{1}}}$: 
\[
d_0^*, d_1^* \colon \xymatrix{ \Stabr {\ho{\ms C}}	\ar@<0.6ex>[r] \ar@<-0.6ex>[r] 	& \Stabr {\ho{\ms C^{\Delta^{1}}}}	}. 
\]
\item \label{main1-2}The images $\Im d_0^*$ and $\Im d_1^*$ do not intersect each other. 
\item \label{main1-3}Both images $\Im d_0^*$ and $\Im d_1^*$ are closed in $\Stabr {\ho{\ms C^{\Delta^{1}}}}$. 
\item \label{main1-4}A stability condition $\sigma$ is full if and only if $d_0^* \sigma$ (or $d_1^* \sigma$) is full. 
\end{enumerate}
\end{thm}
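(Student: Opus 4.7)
My plan relies on the observation that $d_0$ and $d_1$ each admit a fully faithful adjoint, giving $\ho{\ms C^{\Delta^{1}}}$ a natural semi-orthogonal decomposition. Writing $i_0(Y) := (0 \to Y)$ and $i_1(X) := (X \to 0)$, one verifies the adjunctions $i_0 \dashv d_0$ and $d_1 \dashv i_1$, and that every object $(f : X \to Y)$ of $\ho{\ms C^{\Delta^{1}}}$ sits in the canonical exact triangle
\[
i_0(Y) \to (f) \to i_1(X) \to i_0(Y)[1].
\]
This exhibits $\ho{\ms C^{\Delta^{1}}}$ as a semi-orthogonal decomposition with pieces $i_0 \ho{\ms C}$ and $i_1 \ho{\ms C}$, and identifies $K_0(\ho{\ms C^{\Delta^{1}}}) \cong K_0(\ho{\ms C})^{\oplus 2}$ via $(i_0)_* \oplus (i_1)_*$.

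Given a reasonable $\sigma = (Z, \mca P) \in \Stabr{\ho{\ms C}}$, the plan for constructing $d_0^* \sigma = (Z^*, \mca P^*)$ is to set
\[
Z^*([f : X \to Y]) := Z([Y]) + \lambda \cdot Z([X])
\]
for a fixed $\lambda \in \bb C$ of non-real argument, ensuring $Z^*$ remains non-vanishing on $i_1 \ho{\ms C}$ and so the resulting stability condition is reasonable. The slicing $\mca P^*$ is defined by extension closure: $\mca P^*(\phi)$ is generated by $i_0 \mca P(\phi)$ together with $i_1 \mca P(\phi')$, where $\phi'$ is the phase shift dictated by the argument of $\lambda$. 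The canonical triangle above, refined by the HN filtrations of $Y$ (lifted into $i_0 \ho{\ms C}$) and $X$ (lifted into $i_1 \ho{\ms C}$), then assembles into an HN filtration of $(f)$. The construction of $d_1^*$ is symmetric, with the roles of $i_0, i_1$ and the SOD orientation exchanged.

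The main obstacle will be verifying the Harder--Narasimhan axiom: one must choose $\lambda$ so that the interleaved filtrations always produce a strictly decreasing-phase filtration of $(f)$, and show this is possible for every object under the ``reasonable'' hypothesis (which is presumably the very condition that controls the phase interleaving). Once this is done, the remainder of (1) is routine: continuity is immediate from the explicit formulas, and injectivity follows because restricting $d_0^* \sigma$ to $i_0 \ho{\ms C}$, transported via the equivalence $i_0 : \ho{\ms C} \xrightarrow{\sim} i_0 \ho{\ms C}$, recovers $\sigma$.

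For (2), any $\sigma' \in \Im d_0^*$ is distinguished from elements of $\Im d_1^*$ by which subcategory ($i_0 \ho{\ms C}$ or $i_1 \ho{\ms C}$) carries the untwisted copy of the central charge; under the finite-rank assumption on $K_0(\ho{\ms C})$, the two possibilities cannot coincide, giving disjointness. For (3), the image $\Im d_0^*$ is cut out by the closed condition that $Z^*$ is related to its restriction to $K_0(i_0 \ho{\ms C})$ by a fixed $\lambda$-twist on the complementary summand, together with matching slicing data --- all closed conditions in the Bridgeland topology. For (4), fullness of $\sigma$ is a rank-type condition on $Z$ that is preserved by the construction, since both summands of $K_0(\ho{\ms C})^{\oplus 2}$ contribute independently to $Z^*$; hence $\sigma$ is full if and only if $d_i^* \sigma$ is full.
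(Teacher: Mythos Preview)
Your central step has a genuine gap. From the triangle $i_0(Y)\to f\to i_1(X)$ you propose to refine by the HN filtrations of $Y$ and $X$ separately and ``assemble'' an HN filtration of $f$. But the triangle forces every $i_0$-factor of $Y$ to precede every $i_1$-factor of $X$ in such a filtration, and for a fixed $\lambda$ there is no reason the phases should be monotone: $Y$ can have $\sigma$-HN factors of phase arbitrarily close to $0$ while $X$ has factors of phase arbitrarily close to $1$. You also misidentify the role of the hypothesis ``reasonable'': it means $\inf\{|Z(E)|:E\ \sigma\text{-semistable}\}>0$, a condition ensuring local finiteness of a glued stability condition, not a phase-interleaving condition. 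A further problem is your proposed symmetry for $d_1^*$: the reversed pair $\langle\mb C_{0/},\mb C_{/0}\rangle$ is \emph{not} a semiorthogonal decomposition, since $\Hom_{\ho{\ms C^{\Delta^1}}}(j_!(x),j_*(y))\cong\Hom_{\ho{\ms C}}(x,y[-1])$ need not vanish; so ``exchanging the SOD orientation'' is not available.

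The paper proceeds differently. It uses the two semiorthogonal decompositions $\langle\mb C_s,\mb C_{/0}\rangle$ and $\langle\mb C_{0/},\mb C_s\rangle$ involving the diagonal $\mb C_s=\Im s$, and defines $d_0^*\sigma=\gl{\sigma}{[-1]\sigma}$ and $d_1^*\sigma=\gl{[1]\sigma}{\sigma}$ via the Collins--Polishchuk gluing theorem; the required $\Hom^{\le 0}$-vanishing follows directly from the adjunctions $d_0\dashv s\dashv d_1$, and that theorem then yields existence, reasonableness, and continuity at once, with no free parameter. For (2) the paper compares central charges explicitly. For (3) it invokes the inducing construction of Macr\`\i--Mehrotra--Stellari to realise $\Im d_0^*$ as the preimage of a diagonal under a continuous map $\sigma\mapsto(s^{-1}\sigma,[1]j_!^{-1}\sigma)$. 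For (4) it first proves that $f$ is $d_0^*\sigma$-semistable of phase $\phi$ iff $d_0f$ and $\cof f$ are both $\sigma$-semistable of phase $\phi$, and then deduces equivalence of the support properties; your one-line rank argument does not capture this.
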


Collins-Polishchuk \cite{MR2721656} constructed a ``glued" stability condition from a semiorthogonal decomposition of a triangulated category. 
Since the categoy $\ho{\ms C^{\Delta^{1}}}$ of morphisms has semiorthogonal decompositions with $\ho{\ms C}$ and $\ho{\ms C}$ (the details are in \S \ref{sc:SOD}), the gluing construction is effecitve in Theorem \ref{thm-main1}. 
Reasonable stability conditions (see Definition \ref{dfn-reasonable}) are necessary for the continuity of $d_{0}^{*}$ and $d_{1}^{*}$. 
In addition full stability conditions are most basic stability conditions (see also Section \ref{supportproperty}), and we do not know whether there exists a non full stability condition does exists or not. 
Since a full stability condition is reasonable, reasonable stability conditions are sufficiently ``reasonable".

Assertions (1) and (2) are consequences of the gluing construction. 
We use an ``inducing construction'' developed in \cite{MR2524593} to prove the third assertion (3) in Theorem \ref{thm-main1}. 
Roughly speaking the inducing construction asserts that a faithfull exact functor $F \colon \mb D \to \mb D'$ between triangulated categories induces a continuous map from a closed subset of $\Stab {\mb D'}$ to $\Stab{\mb D}$.   
The fourth assertion (4) follows from a necessary and sufficient condition for semistable objects in $\ho{\ms C^{\Delta^{1}}}$ with respect to $d_0^* \sigma$ and $d_1^* \sigma$.

The following problem is derived from the fourth assertion in Theorem \ref{thm-main1}:

\begin{prb}\label{problem2}
Is the image $\Im d_0^*$ path connected to $\Im d_1^*$? 
\end{prb}

Unfortunately it is quite difficult to prove the connectedness of the space of stability conditions and there are no counter-example such that the subspace of full stability conditions is connected. 
Thus it is natural to expect that both images $\Im d_0^*$ and $\Im d_1^*$ are path connected and the following gives an evidence. 

\begin{thm}\label{thm-main2}
If $\ms C$ is the infinity category $\ms D_{\mr{coh}}^{b}(\bb P^{1})$ of projective line $\bb P^1$ over a field $\mb k$, then the image $\Im d_0^*$ is path connected to $\Im d_1^*$. 
\end{thm}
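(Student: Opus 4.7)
The plan is a three-step reduction: first reduce to connecting a single pair of points, next identify that pair explicitly via the gluing construction, and finally exhibit an explicit deformation between them.

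By \cite{MR2373143}, \cite{MR2335991}, and \cite{MR2219846} the space $\Stabr{\ho{\ms C}} = \Stabr{\ms D^{b}_{\mr{coh}}(\bb P^{1})}$ is non-empty and path connected. Since $d_0^*$ and $d_1^*$ are continuous by Theorem \ref{thm-main1} (\ref{main1-1}), both $\Im d_0^*$ and $\Im d_1^*$ are path-connected subsets of $\Stabr{\ho{\ms C^{\Delta^{1}}}}$. It therefore suffices to produce one continuous path in $\Stabr{\ho{\ms C^{\Delta^{1}}}}$ joining a single chosen point $d_0^*\sigma \in \Im d_0^*$ to a single chosen point $d_1^*\sigma \in \Im d_1^*$.

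Next I would fix a geometric stability condition $\sigma \in \Stabr{\ho{\ms C}}$ whose heart is the standard heart $\mb{coh}\,\bb P^{1}$ and whose central charge is $Z(E) = -\deg E + \sqrt{-1}\,\rank E$. Using the semiorthogonal decomposition of $\ho{\ms C^{\Delta^{1}}}$ into two copies of $\ho{\ms C}$ (cf.\ \S \ref{sc:SOD}), the Collins--Polishchuk gluing of \cite{MR2721656} applied to the pair $(\sigma,\sigma)$ produces the two pull-backs $d_0^*\sigma$ and $d_1^*\sigma$; they differ precisely in which copy of $\ho{\ms C}$ is placed above the other in phase. Morally, $d_1^*\sigma$ is obtained from $d_0^*\sigma$ by rotating the central charge of one SOD component by $\pi$. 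I would then construct an interpolating family $\{\sigma_t\}_{t \in [0,1]}$ in $\Stabr{\ho{\ms C^{\Delta^{1}}}}$ with $\sigma_0 = d_0^*\sigma$ and $\sigma_1 = d_1^*\sigma$ by rotating the central charge of one SOD piece by $t\pi$ while keeping the other fixed and then applying the gluing construction at each $t$; continuity of $t \mapsto \sigma_t$ is inherited from continuity of the gluing parameters.

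The main obstacle is to verify that $\sigma_t$ actually belongs to $\Stabr{\ho{\ms C^{\Delta^{1}}}}$ for every $t \in [0,1]$. For $t$ away from the critical parameter at which the two SOD pieces share a common phase, the gluing recipe automatically yields a heart with the Harder--Narasimhan property and with reasonable-ness in the sense of Definition \ref{dfn-reasonable}. At the critical parameter, however, one must perform a tilt of the glued heart across the shared phase and check that the tilted heart again underlies a reasonable stability condition; for $\bb P^{1}$ this verification reduces, via the Beilinson-type description of $\ho{\ms C^{\Delta^{1}}}$ as a derived category of representations of a well-understood quiver, to a combinatorial check on a classifiable family of indecomposables in the glued heart. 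This check is where the technical core of the argument lies.
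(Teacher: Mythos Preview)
Your reduction to a single pair of points is fine and matches the paper. The gap is in the choice of base point and in the shape of the interpolating family.

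You pick the geometric stability condition with $Z(E)=-\deg E+\sqrt{-1}\,\rank E$. With this $\sigma$ the slicing $\mca P(0,1]$ contains semistable objects of every phase in $(0,1]$, so as soon as you rotate one gluing factor by any positive angle the Collins--Polishchuk hypotheses (conditions (\ref{condition-a}) and (\ref{condition-b}) of Proposition~\ref{CP2.2}) fail: for $\theta>0$ one needs $\Hom^{\leq 0}\!\big(\mca P(0,1],\,\mca P(\theta-1,\theta]\big)=0$, which forces $\Hom\!\big(\mca P(0,\theta],\mca P(0,\theta]\big)=0$, and this is never true. Thus there is no ``single critical parameter''---the glued family $\sigma_t$ simply does not exist for $t>0$ with your choice of $\sigma$. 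The paper avoids this by choosing instead an \emph{algebraic} stability condition on $\bb P^{1}$, coming from the Kronecker quiver, whose central charge lands in $\bb R$ (the condition (Deg) of Definition~\ref{dfn:construction}); then $\mca P(0,1]=\mca P(1)$, the glued heart is constant in $\theta$, and the gluing hypotheses hold for all $\theta\in[0,1)$. The remark following Theorem~\ref{thm:P^1} makes this explicit: the chosen $\sigma$ is \emph{not} geometric, and such a $\sigma$ does not exist in positive genus.

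There is a second issue. The two pull-backs are glued with respect to \emph{different} semiorthogonal decompositions ($\langle \mb C_{s},\mb C_{/0}\rangle$ versus $\langle \mb C_{0/},\mb C_{s}\rangle$), so a rotation of one factor inside a fixed SOD cannot land on $d_1^{*}\sigma$. The paper's path has three legs: deform $d_0^{*}\sigma$ to $d_0^{*}(2/3)\sigma$ by the rotated gluing of Definition~\ref{dfn:construction}; act by the global element $g_{2/3}\in\widetilde{\mr{GL}}_2^{+}(\bb R)$; then observe, via the explicit classification of semistables in Lemma~\ref{lm:classification} and Corollary~\ref{cr:torsionpiar}, that the result equals $d_1^{*}(2/3)\sigma$, which one finally deforms back to $d_1^{*}\sigma$. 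Your sketch is missing both the degenerate base point and this middle $\widetilde{\mr{GL}}_2^{+}$ step.
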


In the proof of Theorem \ref{thm-main2} above we use an algebraic stability condition which does not exist in $\Stab {\ho{\ms D_{\mr{coh}}^{b}(C)}	}$ when the genus of the curve $C$ is grater than $0$. 
It is natural to study Theorem \ref{thm-main2} for positive genus cases. 
The case of $g(C)=1$ are discussed in \cite{2019arXiv190504240M} independently and their result gives an affirmative answer to Problem \ref{problem2}.

We note that the same argument in Theorem \ref{thm-main2} is effective for the full component $\Stabd {	\ho{\ms D^{b}_{\mr{coh}}(\bb P^{2})}	}$ of the space of stability conditions on the projective plane $\bb P^2$. 
Thus Theorem \ref{thm-main2} gives an evidence of Problem \ref{problem2}.

The organization of this article is the following. 
Section 2 is basically a summary on the infinity category of morphisms. 
We also describe the Serre functor on $\ho {\ms C^{\Delta^{1}}}$ under some assumptions on $\ms C$. 
The description gives an answer to \cite[Conjecture 3.17]{2019arXiv190504240M} (see also Remark \ref{rmk:relevant}). 
In Section 3 we observe Problem \ref{problem1} in the easiest case. 
In the observation, we show that the space $\Stab{\mb D^{b}(\mb{mod}\, \mb k)}$ 
for arbitrary field $\mb k$ is isomorphic to $\bb C$ (Corollary \ref{cor:speck}) where $\mb {mod}\, \mb k$ is the category of finite dimensional $\mb k$-spaces.  
One of key ingredients is the indecomposability of $\sigma $-stable objects. 
As an application, we show that the bounded derived category of finite generated modules on a principle ideal domain has no stability condition (Proposition \ref{pr:empty}). 

The first and second assertion in Theorem \ref{thm-main1} is proven in Propositions \ref{pr:pullback} and \ref{pr:fundamental}. 
The third assertion of Theorem \ref{thm-main1} is Theorem \ref{thm:closed}. 
The fourth assertion of Theorem \ref{thm-main1} is Theorem \ref{thm:full}. 
The section $5$ is devoted to prove Theorem \ref{thm-main2} which is just Theorem \ref{thm:P^1}.

In the last section, we discuss another construction of $\ho{\ms C^{\Delta^{1}}}$ when $\ms C$ is the infinity category $\ms D_{\mr{coh}}^{b}(X)$ for a Noetherian scheme $X$. 
Since the homotopy category $\ho{\ms D_{\mr{coh}}^{b}(X)}$ is equivalent to the bounded derived category $\mb D^{b}(\mb{coh}\, X)$, 
there is a natural bounded $t$-structure on $\ho{\ms D_{\mr{coh}}^{b}(X)}$ whose heart is the abelian category $\mca B= \mb{coh}(X)$ of coherent sheaves on $X$.  
Then the category $\mr{Mor}(\mca B)$ of morphisms in $\mca B$ is also an abelian category. 
Thus one could define the bounded derived category $\mb D^{b}\left(\mr{Mor}(\mca B) \right)$ via localization of quasi-isomorphisms. 
Corollary \ref{pr:MMR} claims that two categories $\ho{\ms D_{\mr{coh}}^{b}(X)^{\Delta^{1}}}$ and $\mb D^{b}\left(\mr{Mor}(\mca B) \right)$ are equivalent. 
As a consequence, $\ho{\ms D_{\mr{coh}}^{b}(\Spec \mb k)^{\Delta^{1}}}$ is equivalent to the bounded derived category of $A_{2}$ -quiver (see also Remark \ref{rmk:relevant}).

\subsection*{Acknowledgement}
I would like to thank H. Minamoto for his encouragement and thank a private communication with him which was helpful especially for  Proposition \ref{pr:eqF}. 
I also thank my family for their great support.  
Finally I am extremely grateful to the referee who patiently read this article, pointed out many inaccuracies, and relaxed the assumption of Lemma \ref{lm:indecomposable} in the earlier version  to additive categories.

\section{A triangulated category of morphisms in $\ms C$}

Let $\mr{Mor}(\mb D)$ be the category of morphisms in a category $\mb D$. 
Unfortunately $\mr{Mor}(\mb D)$ is not a triangulated category in general when $\mb D$ is a triangulated category. 
To solve the problem, we start with a stable infinity category. 

\subsection{Stable infinity categories}\label{sc:stable}
Let $[n]=\{0 < 1< \cdots < n\}$ be the finite ordinal ($n\geq 0$) and let $\Delta$ be the category of finite ordinals. 
The $i$-th face map is denote by $d^{i} \colon [n] \to [n+1] $ ($0 \leq i \leq n+1$) and the $j$-th degeneracy map is denoted by $s^{j} \colon [n+1] \to [n]$ ($0 \leq j \leq n$). 
The standard $n$-simplex is denoted by $\Delta^{n}$. 
Given a simplicial set $K$, the set of $n$-simplices is denoted by $K_{n}$. 
We denote by $d_{i} \colon K_{n+1}  \to K_{n}$ (resp. $s_{j} \colon K_{n} \to K_{n+1}$) the map induced from $d^{i}$ (resp. $s^{j}$). 

Let $\ms C$ be an infinity category in the sense of \cite[Definition 1.1.2.4]{MR2522659}. 
Namely, an infinity category is a simplicial set $\ms C$ satisfying the following lifting property:
\begin{itemize}
\item For any inclusion $\xymatrix{\iota \colon \Lambda _{k}^{n } \ar@{^{(}->}[r]& \Delta^{n} }$ of the $k$-th horn 
where $0 < k < n$ and for any morphism $p \colon \Lambda _{k}^{n} \to \ms C$, there exists a morphism $\tilde p \colon \Delta^{n} \to \ms C$ such that $\tilde p\circ \iota =p$. 
\end{itemize}
 
A $0$-simplex of the simplicial set $\ms C$ is called an \textit{object} of the infinity category $\ms C$ and 
a $1$-simplex of $\ms C$ is called a \textit{morphism} in $\ms C$. 
The homotopy category $\ho{\ms C}$ of $\ms C$ is a usual category  whose object is the same as that of $\ms C$ 
and whose morphism is an equivalence class $[f]$ of a morphism $f$ in $\ms C$ (the details are in \cite[\S 1.2.3]{MR2522659}). 
One of descriptions of the equivalence relation is the following: 

\begin{prop-dfn}[{\cite[\S 1.2.3]{MR2522659}}]\label{dfn:homotpy}
Let $\ms C$ be an infinity category. 
\begin{enumerate}
\item Two morphisms $f$ and $g$ from $x$ to $ y$ in an infinity category $\ms C$ is homotopic if there exists a $2$-simplex 
$h \colon \Delta^{2} \to \ms C$ satisfying $d_{0}h = \1 _{y}, d_{1}h= g$ and $d_{2}h =f$. 
Then the relation of homotopy is an equivalence relation. 
\item 
The homotopy category $\ho{\ms C}$ has the same objects of $\ms C$. 
A morphism in $\ho{\ms C}$ is given by the equivalence class of $1$-simplices with respect to the relation defined by $(1)$. 
\end{enumerate}
\end{prop-dfn}

A functor $F \colon \ms C \to \ms D$ between infinity categories is nothing but a morphism as simplicial sets. 
The functor $F$ naturally induces the functor $\ho{\ms C} \to \ho{\ms D}$ between homotopy categories. 
Though the induced functor should be written as $\ho{F}$, we write simply $F$ by abusing notation.

An infinity category $\ms C$ is said to be \textit{stable} if $\ms C$ has a zero object, admits finite limits and colimits, and pushout squares coincide with pullback squares. 
If $\ms C$ is stable then $\ho{ \ms C}$ is triangulated by \cite[Theorem 1.1.2.14]{higheralgebra}. 
A zero object in a stable infinity category is denoted by $0$. 
The mapping cone of a morphism $f \colon x \to y $ in the triangulated category $ \ho {\ms C}$ is given by the pushout\footnote{The push out is calculated in the infinity category $\ms C$, not in the triangulated category $\ho{\ms C}$. } $0 \sqcup _{x} y$ of $f \colon x \to y$ in the infinity category $\ms C$. 

Before examples of stable infinity categories, we fix the following notation for differential graded categories (shortly dg categories): 

\begin{note}\label{note:notation}
Let $\ms D$ be a dg category over a commutative ring. 
\begin{itemize}
\item The cochain complex of morphisms in $\ms D$ is denoted by $\Map_{\ms D}(x,y)$ for $x $ and $y \in \ms D$. 
\item The degree $p$-th part of $\Map_{\ms D}(x,y)$ is denoted by $\Map_{\ms D}^{p}(x,y)$. 
\item The differential of $\Map_{\ms D}(x,y)$ is denoted by $\delta^{p} \colon \Map_{\ms D}^{p}(x,y) \to \Map_{\ms D}^{p+1}(x,y)$ or shortly $\delta^{p}$.  
\item The kernel of $\delta^{p} \colon \Map_{\ms D}^{p}(x,y) \to \Map_{\ms D}^{p+1}(x,y)$ is denoted by $Z^{p}_{\ms D}(x,y)$. 
\item If $\mca A$ is an additive category, the dg-category of cochain complexes in $\mca A$ is denoted by $\mr{Ch}(\mca A)$. 
\item The category of cochain complexes in the additive category $\mca A$ is denoted by $\K(\mca A)$. 
\end{itemize}
Note that the objects in $\mr{Ch}(\mca A)$ and $\K(\mca A)$ are the same, and we have $\Hom_{\K(\mca A)}(x,y)=Z^{0}_{\mr{Ch}(\mca A)}(x,y)$
\end{note}

One of good examples of stable infinity categories is a ``derived infinity category'' $\ms D(\mca A)$ of a Grothendieck abelian category $\mca A$. 
The following is necessary for a relation with $\ms D(A)$ and the dg category $\mr{Ch}(\mca A)$.  

\begin{prop-dfn}[{\cite[\S 1.3.1, \S 1.3.2]{higheralgebra}}]\label{dfn:dg-nerve}
Let $\ms D$ be a dg category over a commutative ring. 
Define a simplicial set $\mr{N}_{\mr {dg}}(\ms D)$ whose $n$-simplex $\sigma \in \mr{N}_{\mr {dg}}(\ms D)_{n}$ is the set of ordered pairs $(\{x_{i}\}_{i=0}^{n}, \{f_{I}\})$ where 
\begin{itemize}
\item Each $x_{i}$ is an object in $\ms D$. 
\item For any subset $I=\{	i_{-}< i_{m} < \cdots < i_{1}< i_{+}\} $ of the ordinal $[n]$ where $m\geq 0$, $f_{I}$ is an element in $\Map_{\ms D}^{-m}(x_{i_{-}}, x_{i_{+}})$ satisfying 
\[
\delta f_{I} = \begin{cases}
\sum_{1\leq j \leq m }(-1)^{j} \left(f_{I-\{i_{j}\}} - f_{ \{ i_{j}< \cdots < i_{1}<i_{+} \} } \circ f_{\{ i_{-}< i_{m} < \cdots <i_{j} \} }\right)	& (m >0)\\
0 &	(m=0). 
\end{cases}
\]
For a nondecreasing map $ \alpha \colon [m] \to [n]$, the induced map 
$\alpha ^{*} \colon \mr{N}_{\mr {dg}}(\ms D)_{n} \to \mr{N}_{\mr {dg}}(\ms D)_{m}$ is given by $(\{x_{i}\} _{0 \leq i \leq n}, \{f_{I}\}) \mapsto  (\{x_{\alpha(j)}\}_{0 \leq j \leq m} , \{g_{J}\})$ where 
\[
g_{J}= \begin{cases}
f_{\alpha(J)}	&	\mbox{if $\alpha|J$ is injective} \\
\1_{x_{i}}	& \mbox{if $J=\{j, j'\}$ with $\alpha (j)=\alpha(j')=i$} \\
0 & \mbox{otherwise}. 
\end{cases}
\]
\end{itemize}
Then the simplicial set $\mr{N}_{\mr {dg}}(\ms D)$ is an infinity category. 
In particular, the simplicial set $\mr N_{\mr {dg}}(\ms D)$ is referred as the differential graded nerve of $\ms D$. 
\end{prop-dfn}

\begin{rmk}\label{rmk:mugen-dg}
Keep the notation as in Proposition-Definition \ref{dfn:dg-nerve}. 
From the definition, the set $\mr {N}_{\mr {dg}}(\ms D)_{1}$ of $1$-simplices of $\mr {N}_{\mr {dg}}(\ms D)$ is  
\[
\{ f \in Z^{0}_{\ms D}(x,y) \mid x, y \in \ms D\}. 
\]
Similarly the set $\mr {N}_{\mr {dg}}(\ms D)_{2}$ is  
\[
\{	(f_{0}, f_{1}, f_{2}, h) \in Z^{0}_{\ms D}(y,z) \times Z^{0}_{\ms D}(x,z) \times Z^{0}_{\ms D}(x,y) \times \Map_{\ms D}^{-1}(x,z) \mid 
\delta (h) = f_{0}  f_{2}-f_{1}\mbox{ and }x,y,z \in \ms D	\}. 
\]
Thus a $2$-simplex of $\ms C= \mr{N}_{\mr {dg}}(\ms D)$ could be regarded as a $4$-tuple of morphisms in $\ms D$. 
\end{rmk}

\begin{prop-dfn}[{\cite[Proposition 1.3.5.3]{higheralgebra}}]\label{dfn:injetivemodel}
Suppose $\mca A$ is a Grothendieck abelian category. 
$\K(\mca A)$ has a (left proper combinatorial) model structure described as follows: 
\begin{itemize}
\item[(\textsf{w})] A map $\{f^{p} \colon E^{p} \to F^{p}\}_{p\in \bb Z}$ in $\K(\mca A)$ is a weak equivalence if $f$ is a quasi-isomorphism. 
 \item[(\textsf{c})] A map $\{f^{p} \colon E^{p} \to F^{p}\}_{p\in \bb Z}$ is a cofibration if $f^{p} \colon E^{p}\to F^{p}$ is a monomorphism for each degree $p \in \bb Z$. 
 \item[(\textsf{f})] A map $\{f^{p} \colon E^{p} \to F^{p}\}_{p\in \bb Z}$ is a fibration if it has the right lifting property with respect to every map which is a cofibration and a weak equivalence. 
\end{itemize}
We refer to the model structure as the injective model structure on $\K(\mca A)$. 
\end{prop-dfn}

\begin{rmk}
Any object in $\K(\mca A)$ is cofibrant with respect to the injective model structure. 
\end{rmk}

\begin{dfn}\label{dfn:derivedmugen}
Let $\mca A$ be a Grothendieck abelian category. 
\begin{enumerate}
\item Let $\mr{Ch}(\mca A)^{\circ }$ be the full sub-dg-category of $\mr{Ch}(\mca A)$ consisting of fibrant-cofibratn objects in $\K(\mca A)$ with respect to the injective model structure. 
Define the infinity category $\ms D(\mca A)$ by the infinity category $\mr{N}_{\mr{dg}}(\mr{Ch} (\mca A)^{\circ})$. 
\item If $\mca A$ is the category $\mb{Qcoh}(X) $ of quasi-coherent sheaves on a Noetherian scheme $X$, 
we denote $\ms D(\mb{Qcoh}(X))$ by $\ms D(X)$. 
\item Keep the notation as in $(2)$. 
Define $\ms D_{\mr{coh}}^{b}(X)$ by the full sub-infinity-category of $\ms D(X)$ consisting of bounded complexes with coherent cohomologies. 
\end{enumerate}
\end{dfn}

\begin{prop}[{\cite[Proposition 1.3.5.9]{higheralgebra}}]
Let $\mca A$ be a Grothendieck abelian category. 
Then the infinity category $\ms D(\mca A)$ is stable. 
\end{prop}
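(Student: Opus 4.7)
The plan is to verify the three defining conditions for stability in turn: existence of a zero object, existence of all finite limits and colimits, and the coincidence of pushout and pullback squares in $\ms D(\mca A)$. Since the statement is essentially Lurie's \cite[Proposition 1.3.5.9]{higheralgebra}, I follow the strategy suggested by the construction of the dg-nerve in Proposition-Definition \ref{dfn:dg-nerve}.

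For the zero object I would note that the zero complex $0 \in \mr{Ch}(\mca A)$ is trivially both fibrant and cofibrant in the injective model structure, hence lies in $\mr{Ch}(\mca A)^{\circ}$; moreover $\Map_{\mr{Ch}(\mca A)}(x,0)$ and $\Map_{\mr{Ch}(\mca A)}(0,x)$ are the zero complexes, which under the dg-nerve construction forces $0$ to be both initial and final in $\ms D(\mca A)$. For finite (co)limits, the key point is that $\mr{Ch}(\mca A)^{\circ}$ is a pretriangulated dg category: it is closed under shifts and the mapping cylinder/mapping cone constructions, and cofibrant replacement is available since every object of $\K(\mca A)$ is cofibrant. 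Using Remark \ref{rmk:mugen-dg} and the fact that a $2$-simplex of $\mr{N}_{\mr{dg}}(\ms D)$ is precisely a triple of composable $0$-cycles together with a $(-1)$-chain witnessing commutativity up to homotopy, one can translate the existence of a pushout $0 \sqcup_x y$ for $f\colon x \to y$ in $\ms D(\mca A)$ to the existence of a mapping cone in $\mr{Ch}(\mca A)^{\circ}$, which exists and remains fibrant-cofibrant. The cases of arbitrary pushouts and pullbacks reduce to this by standard functoriality arguments (completing a diagram $X \leftarrow Z \to Y$ by replacing $X \leftarrow Z$ with a cofibration via the mapping cylinder, then taking the pushout; dually for pullbacks via path objects).

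The crux is the third axiom: that a square is a pushout iff it is a pullback. Here I would argue as follows. Given a commutative square in $\ms D(\mca A)$, its realization in $\mr{Ch}(\mca A)^{\circ}$ yields (after replacement by a cofibration) a total complex whose acyclicity characterizes the square as a pushout. But the same total complex, rotated, characterizes the square as a pullback, because in $\mr{Ch}(\mca A)$ the mapping cone and the mapping fiber of a morphism $f\colon x \to y$ differ only by a shift $[1]$: the long exact homology sequence built from $0 \to x \to \mathsf{Cyl}(f) \to \cof(f) \to 0$ coincides with the one built from the fiber sequence. Equivalently, the suspension functor $[1]$ is an equivalence on $\ho{\ms D(\mca A)}$ (with inverse given by $[-1]$), which is exactly the stability condition at the level of the homotopy category, and this lifts to the infinity-categorical statement that pushout squares equal pullback squares.

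The main obstacle I anticipate is the third step: verifying the universal property of (co)limits directly inside the dg-nerve is combinatorially delicate, because $n$-simplices for large $n$ encode a web of higher homotopies $f_I \in \Map^{-m}$. Turning the heuristic that ``mapping cones agree with mapping fibers up to shift'' into a precise identification of the two universal properties in $\mr{N}_{\mr{dg}}(\mr{Ch}(\mca A)^{\circ})$ requires systematic use of the lifting properties of the injective model structure to extend partial dg-nerve data to full simplices; this is where the argument leans most heavily on the fibrant-cofibrant hypothesis built into the definition of $\ms D(\mca A)$.
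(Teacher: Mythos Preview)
The paper does not give its own proof of this proposition: it is stated with the citation \cite[Proposition 1.3.5.9]{higheralgebra} and no proof environment follows. It is quoted as background from Lurie's \emph{Higher Algebra}, so there is nothing in the paper to compare your argument against.

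That said, your sketch is heading in the right direction but is more laborious than necessary. The cleanest route, and the one Lurie takes, is not to verify the pushout/pullback coincidence by hand inside the dg-nerve, but to invoke the general fact (essentially \cite[Proposition 1.3.1.10 and the surrounding discussion]{higheralgebra}) that the dg-nerve of any pretriangulated dg category is a stable infinity category. One then only needs to check that $\mr{Ch}(\mca A)^{\circ}$ is pretriangulated, i.e.\ closed under shifts and mapping cones, which is straightforward from the injective model structure. Your third step, where you try to match the universal properties of cones and fibers simplex by simplex, is exactly the part that this general result packages up once and for all; attempting it directly is possible but, as you yourself note, combinatorially unpleasant and not how the literature handles it.
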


\begin{rmk}
The homotopy category $\ho{\ms D(\mca A)}$ is isomorphic to the homotopy category $\mr {Ho}(\K(\mca A))$ with respect to the injective model structure on $\K(\mca A)$.  
Hence the homotopy category $\ho{\ms D(X)}$ is equivalent to the unbounded derived category $\mb D(\mb{Qcoh}\, X)$ of $\mb {Qcoh}(X)$. 

Note that $\ms D^{b}_{\mr{coh}}(X)$ is also stable by \cite[Lemma 1.1.3.3]{higheralgebra} and 
$\ho {\ms D_{\mr{coh}}^{b}(X)}$ is equivalent to the bounded derived category $\mb D^{b}(\mb{coh}\, X)$ of coherent sheaves on a Noetherian scheme $X$. 
\end{rmk}

\begin{rmk}
Basically we use the script font  \verb \mathscr  (ex. $\ms C$) for infinity categories. 
Capital letters with the bold font \verb \mathbf  (ex. $\mb D$) are used for usual categories. 
\end{rmk}

Through this article we use homotopical notation for mapping cones in the triangulated category $\ho {\ms C}$. 
Namely the mapping cone of a morphism $f \colon x \to y $ in $\ho {\ms C}$ is denoted by $\cof f$. 
We also denote $\cof f[-1]$ by $\fib f$. 
Thus we obtain a distinguished triangle in $\ho {\ms C}$ as follows:
\[
\xymatrix{
\fib f	\ar[r]	& x	\ar[r]^f	&	y	\ar[r]	&	\cof f
}. 
\]
We sometimes omit $\cof f$ (or $\fib f$) in the above digram when the triangle is clear from context.

\subsection{The infinity category of morphisms in $\ms C$}
Let $\ms C^{\Delta^{1}}=\Fun{\Delta^{1}}{\ms C}$ be the simplicial set of morphism from the standard simplicial set $\Delta^{1}$ to $\ms C$. 
Then $\ms C^{\Delta^{1}}$ is also an infinity category by \cite[Proposition 1.2.7.3]{MR2522659}. 
Moreover if $\ms C$ is stable then so is $\ms C^{\Delta^{1}}$ by \cite[\S 1.1.1]{higheralgebra}. 
Since the objects in $\ms C^{\Delta^{1}}$ are the morphisms in $\ms C$, 
we refer to $\ms C^{\Delta^{1}}$ as \textit{the infinity category of morphisms} in $\ms C$. 

Since $\ms C$ is isomorphic to $\Fun{\Delta^{0}}{\ms C}$, face maps and degeneracy maps contravariantly induce functors between infinity categories respectively: 
$\xymatrix{
\ms C^{\Delta^{1}}\ar@<1ex>[r]|-{d_{0}}	\ar@<-1ex>[r]|-{d_{1}}&	\ms C
}$
 and 
$\xymatrix{
\ms C \ar[r]	|-{s}&	\ms C^{\Delta^{1}}
}$. 
Here $d_{i}$ is the induced functor from $d^{i}$ ($i=0,1$) and $s$ is the functor induced by $s^{0}$ by abusing notation.

If a morphism $f$ in $ \ms C$ satisfies $d_{0}f=y$ and $d_{1}f=x$, then we write $f$ as $[f \colon x \to y]$ to emphasize the source and the target of the morphism. 
Since there exists a unit transformation $\1_{\ms C^{\Delta^{1}}}\to s \circ d_{0}$, $d_{0}$ is left adjoint to $s$ by \cite[Proposition 5.2.2.8]{MR2522659}. 
Similarly $d_{1}$ is left adjoint to $s$. 
Thus we obtain the following diagram: 
\begin{equation}
\xymatrix{
\ms	C	\ar[r]|(.4)s&\ar@/_8pt/[l]|{d_0}\ar@/^8pt/[l]|{d_1}	\ms C^{\Delta^1}
}
; d_0 \dashv s \dashv d_1. 
\end{equation}

Let $\mr{Mor}(\ho{\ms C})$ be the category of morphisms in $\ho{\ms C}$. 
Then the objects of $\mr{Mor}(\ho{\ms C})$ are the same as that of $\ho{\ms C^{\Delta^{1}}}$ but the morphisms are different. 
To explain the difference, take $f$ and $g$ in $\ms C^{\Delta^{1}}$. 

A morphism $\tau \colon f \to g$ in $\mr{Mor}(\ho{\ms C})$ is a pair $([\tau_{1}], [\tau_{0}])$ of morphisms $[\tau _{i}] \colon d_{i}f \to d_{i}g$ in $\ho{\ms C}$ ($i \in \{0,1\}$) satisfying the equality $[g] \circ [\tau_{1}]= [\tau_{0}]\circ[f]$ in $\Hom_{\ho {\ms C}}(d_{1}f, d_{0}g)$. 
On the other hand a morphism in $\ho{\ms C^{\Delta^{1}}}$ is a pair of higher morphisms in $\ms C$. 
More precisely a morphism $\varphi \colon f \to g$ in ${ \ms C^{\Delta^1}}$ (not in the homotopy category) is 
a pair $(h_{1}, h_{0})$ of $2$-simplices $\Delta^{2}\to \ms C$ such that  
\begin{itemize}
\item $d_{0} h_{1}=g$, $d_{2}h_{0}=f$, and 
\item $d_{1}h_{1}=d_{1}h_{0}(=\psi)$.
\begin{equation}\label{eq:highermorphism}
\xymatrix{
d_1f	\ar[r]^{d_{2}h_{1}}	\ar[d]_{d_{2}h_{0}=f}  \ar[rd]|-{\psi}	&	d_1g \ar[d]^{g=d_{0}h_{1}} \ar@{}[dl]|(.3){h_{1}}	\\
d_0f	\ar[r]_{d_{0}h_{0}} \ar@{}[ur]|(0.3){h_{0}}	&	d_0 g. 
}
\end{equation}
\end{itemize}
Hence a morphisms in the homotopy category is obtained by the the equivalence class $[(h_{1}, h_{0})]$ of the relation in Proposition-Definition \ref{dfn:homotpy}. 

The following lemma is concerned with the difference between $\ho{\ms C^{\Delta^{1}}}$ and $\mr{Mor}(\ho{\ms C})$: 
\begin{lem}\label{lm:difference}

There exists a natural functor $F \colon \ho{\ms C^{\Delta^{1}}} \to \mr{Mor}(\ho{\ms C})$ where $F$ is identity on objects and forgets higher morphisms in $\ms C$. More precisely, 
for a morphism $[\varphi]=[(h_{1}, h_{0})]$ from $f$ to $g$ in $\ho{\ms C^{\Delta^{1}}}$, $F([\varphi])$ is given by the pair $([d_{2}h_{1}], [d_{0}h_{0}])$ of morphisms in $\ho{\ms C}$. 
Morover the functor $F$ is full. 
\end{lem}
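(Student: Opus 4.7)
The plan is to verify three properties in succession: well-definedness of $F$ on morphisms, preservation of identities and composition, and fullness. For well-definedness, the key observation is that $d_{0}, d_{1}\colon \ms C^{\Delta^{1}} \to \ms C$ are functors of infinity categories and therefore descend to functors $\ho{d_{0}}, \ho{d_{1}}\colon \ho{\ms C^{\Delta^{1}}} \to \ho{\ms C}$. Inspection of diagram \eqref{eq:highermorphism} shows that for $\varphi = (h_{1}, h_{0})$ the 1-simplices $d_{2}h_{1}$ and $d_{0}h_{0}$ are exactly the images $d_{1}\varphi$ and $d_{0}\varphi$. Hence the assignment $[\varphi] \mapsto ([d_{2}h_{1}], [d_{0}h_{0}])$ is well defined on homotopy classes, and the equality $[g]\circ[d_{2}h_{1}] = [d_{0}h_{0}]\circ[f]$ required for a morphism in $\mr{Mor}(\ho{\ms C})$ is immediate from the definition of composition in $\ho{\ms C}$: the 2-simplex $h_{1}$ witnesses $[g]\circ[d_{2}h_{1}] = [\psi]$ while $h_{0}$ witnesses $[d_{0}h_{0}]\circ[f] = [\psi]$, where $\psi$ is the common diagonal.

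Functoriality is essentially formal. The identity $\1_{f}$ in $\ho{\ms C^{\Delta^{1}}}$ is represented by the degenerate edge $s_{0}f$, whose associated pair consists of degenerate 2-simplices giving $\1_{d_{1}f}$ and $\1_{d_{0}f}$ on the two endpoints. For composition, given two composable morphisms in $\ho{\ms C^{\Delta^{1}}}$, I would choose a 2-simplex filling an inner horn $\Lambda^{2}_{1}\to \ms C^{\Delta^{1}}$ whose long edge represents the composite; applying $d_{0}$ and $d_{1}$ yields 2-simplices in $\ms C$ that realize the corresponding composites in $\ho{\ms C}$, so $F$ preserves composition.

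Fullness is the main obstacle. Given a morphism $\tau = ([\tau_{1}], [\tau_{0}])\colon f \to g$ in $\mr{Mor}(\ho{\ms C})$ with chosen representatives $\tau_{0}, \tau_{1}$, I would first fill the inner horn $\Lambda^{2}_{1}\subset \Delta^{2}$ to obtain a 2-simplex $h_{0}$ with $d_{2}h_{0} = f$ and $d_{0}h_{0} = \tau_{0}$; set $\psi = d_{1}h_{0}$, so that $[\psi] = [\tau_{0}]\circ[f] = [g]\circ[\tau_{1}]$ in $\ho{\ms C}$ by hypothesis. The genuinely delicate step is to produce a 2-simplex $h_{1}$ with \emph{all three} faces prescribed: $d_{0}h_{1} = g$, $d_{2}h_{1} = \tau_{1}$, and $d_{1}h_{1} = \psi$. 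This is a standard ``prescribed-diagonal'' construction in a quasicategory: first fill $\Lambda^{2}_{1}$ on $(\tau_{1}, g)$ to obtain some $h_{1}'$ with $d_{1}h_{1}' = \psi'$; since $[\psi'] = [\psi]$ in $\ho{\ms C}$, pick a 2-simplex $H$ witnessing the homotopy between $\psi'$ and $\psi$, then assemble $H$, $h_{1}'$, and an appropriate degenerate face into an inner horn $\Lambda^{3}_{1}\to \ms C$ whose filler has the desired $h_{1}$ as its remaining face. The pair $(h_{1}, h_{0})$ then defines a 1-simplex $\varphi$ in $\ms C^{\Delta^{1}}$ with $F([\varphi]) = \tau$.
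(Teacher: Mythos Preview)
Your proposal is correct and follows essentially the same route as the paper: the paper dispatches well-definedness by citing \cite[Proposition~1.2.3.7]{MR2522659} (your argument via $\ho{d_0},\ho{d_1}$ makes this explicit), and for fullness both you and the paper construct $h_0$ directly and then correct the diagonal of a preliminary $h_1'$ via a single $3$-horn filling---the paper fills a $\Lambda_2^3$ built from $(s_1 g,\rho,\bullet,\tilde h_1)$ while you fill a $\Lambda_1^3$, which also works provided the homotopy $H$ is taken in the left-identity form (i.e.\ with $d_2 H=\1_{d_1 f}$). The only substantive addition in your write-up is the explicit check of functoriality, which the paper leaves implicit.
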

 
\begin{proof}
The same argument in the proof of \cite[Propsoition 1.2.3.7]{MR2522659} implies that $F$ is well-defined. 

Let $([\tau_{1}],[\tau_{0} ])$ be a morphism from $f $ to $g$ in $\mr{Mor}(\ho{\ms C})$. 
Choose $1$-simplices $\psi_{1}$ and $\psi_{0}$ such that $[\psi_{1}]=[g]\circ [\tau_{1}]$ and $[\psi_{0}]=[\tau_{0}]\circ [f]$. 
Note that there exist $2$-simplices $h_0 \colon \Delta^{2} \to \ms C$ safisfying $d_{0}h_0 = \tau_{0}, d_{1}h_0=\psi_{0}$, and $d_{2}h_0 =f$ and $\tilde h_{1} \colon \Delta^{2} \to \ms C$ satisfying $d_{0}\tilde h_{1}=g$, $d_{1}\tilde h_{1}=\psi_{1}$ and $d_{2} \tilde h_{1} = \tau_{1}$. 
By the equation $[g]\circ [\tau_{1}]=[\tau_{0}]\circ [f]$, 
we have a $2$-simplex $\rho \colon \Delta^{2} \to \ms C$ such that $d_{0}\rho=\1_{d_{0}g}$, $d_{1}\rho=\psi_{0}$ and $d_{2}\rho=\psi_{1}$. 

Consider the diagram:
\[
\xymatrix{
\Lambda_{2}^{3}\ar[d]	\ar[rr]^{(s_{1}g, \rho, \bullet , \tilde h_{1} )}&	&	\ms C	\\
\Delta^{3}.	\ar@{-->}[urr]_{\sigma}
}
\]
Since $\ms C$ is the infinity category there exists a $3$-simplex $\sigma \colon \Delta^{3} \to \ms C$ as indicated in the above diagram. 
Put $h_{1}=d_{2}\sigma$, then we obtain the desired digram; 
\[
\xymatrix{
d_{1}f	\ar[r]^{\tau_{1}}\ar[d]_{f}	\ar[rd]|{\psi_{0}}	&	d_{1}g\ar[d]^{g} \ar@{}[ld]|(0.3){h_{1}}	\\
d_{0}f \ar[r]_{\tau_{0}}\ar@{}[ur]|(0.32){h_{0}}	&	d_{0}g. 
}
\]
Hence $F$ is full. 
\end{proof}

\begin{rmk}\begin{enumerate}
\item A morphism in $\ho{\ms C^{\Delta^{1}}}$ should be depicted as the diagram(\ref{eq:highermorphism}), 
but we frequently omit $\psi$, $h_{1} $ and $h_{0}$.  
\item Let $ f \to g \to h  $ be a distinguished triangle in $\ho {\ms C^{\Delta^1}}$. 
We obtain the following diagram of distinguished triangles in $\ho{\ms C}$: 
\[
\xymatrix{
d_1f	\ar[r]\ar[d]_f	&	d_1g	\ar[r]	\ar[d]_g	&	d_1h	\ar[d]_h	\\
d_0	f	\ar[r]\ar[d]	&	d_0g	\ar[r]	\ar[d]	&	d_0h	\ar[d]	\\
\cof f	\ar[r]	&	\cof g	\ar[r]	&	\cof h. 
}
\]
In particular the third row is also distinguished triangle in $\ho{\ms C}$. 
\end{enumerate}
\end{rmk}

\subsection{Semiorthogonal decompositions of $\ho{\ms C^{\Delta^{1}}}$}\label{sc:SOD}
The aim of this section is to show that $\ho{\ms C^{\Delta^{1}}}$ has two semiorthogonal decompositions coming from 
two adjoint pairs $d_{0}\dashv s$ and $s \dashv d_{1}$. 
Before the proof, let us recall a semiorthogonal decomposition. 

Let $\mb D$ be a triangulated category. 
A pair $(\mb D_{1}, \mb D_{2})$ of full triangulated subcategories $\mb D_{1}$ and $\mb D_{2}$ of $\mb D$ is said to be a \textit{semiorthogonal decomposition} of $\mb D$ if the pair satisfies 
\begin{itemize}
\item[(1)] $\Hom_{\mb D}(x_{2}, x_{1})=0$ for any $x_{i } \in \mb D_{i}$ ($i=1,2$), and 
\item[(2)] any object $x \in \mb D$ is decomposed into a pair of objects $x_{i}\in \mb D_{i}$ $(i=1,2)$ by the following distinguished triangle in $\mb D$:
\[
\xymatrix{
x_{2}	\ar[r]	&	x	\ar[r]	&	x_{1}	\ar[r]	&	x_{2}[1]. 
}
\]
\end{itemize}
The situation will be denoted by the symbol $\mb D=\< \mb D_{1}, \mb D_{2}\>$ or simply $\< \mb D_{1}, \mb D_{2}\>$. 
In addition to the first condition (1) above, if $\Hom_{\mb D}(x_{1}, x_{2})=0$ holds, the semiorthogonal decomposition is said to be \textit{orthogonal}. 

\begin{rmk}
If $\mb D=\< \mb D_{1}, \mb D_{2}\>$ is a semiorthogonal decomposition of $\mb D$, 
then $\mb D_{1}$ (resp. $\mb D_{2}$) is left admissible (resp. right admissible) by \cite[Lemma 3.1]{MR992977}. 
Namely the inclusion functor $\iota_{1}\colon \mb D_{1}\to \mb D$ (resp. $\iota_{2} \colon \mb D_{2}\to \mb D$) has the left adjoint functor $\tau_{1} \colon \mb D \to \mb D_{1}$ (resp. the right adjoint functor $\tau_{2} \colon \mb D \to \mb D_{2}$). 
Thus we have adjoint pairs of functors $\iota_{2} \dashv \tau_{2}$ and $\tau_{1}\dashv \iota_{1}$. 
\end{rmk}

\begin{lem}\label{lm:SOD}
Let $\ms C$ be a stable infinity  category. 
Set full subcategories of $\ms C^{\Delta^1}$ by 
\begin{align*}
\ms C_{/0}	&=	\left\{ 
[x \to 0]
 \in  {\ms C^{\Delta^1}}	\middle| x \in \ms C \right\},\ 
\ms C_{0/}	=	\left\{ 
[0 \to y]
	\in  {\ms C^{\Delta^1}} \middle| y \in \ms C \right\} \mbox{ and}\\
\ms C_{s}	&=	\left\{ 
[z \stackrel{\1}{\to} z]
 \in { \ms C^{\Delta^1}	}\middle| z \in \ms C \right\}. 
\end{align*}
Furthermore put $\mb C_{/0}:= \ho{\ms C_{/0}}$, $\mb C_{0/}:= \ho{\ms C_{0/}}$ and $\mb C_{s}:= \ho{\ms C_{s}}$
Then the pairs $(\mb D_{0}^{L}, \mb D_{0}^{R}):= (\mb C_{s}, \mb C_{/0})$ and 
$(\mb D_{1}^{L}, \mb D_{1}^{R}):=( \mb C_{0/}, \mb C_{s})$ are semiorthogonal decompositions of $\ho{\ms C^{\Delta^{1}}}$ resepctively. 
\end{lem}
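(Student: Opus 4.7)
The plan is to verify each of the two semiorthogonal decompositions directly from the defining conditions, using the adjunctions $d_0 \dashv s \dashv d_1$ and the fact that limits and colimits in $\ms C^{\Delta^{1}}$ are computed pointwise.

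First I would check that $\mb C_{/0}$, $\mb C_{0/}$, and $\mb C_{s}$ are full triangulated subcategories of $\ho{\ms C^{\Delta^{1}}}$. Since fibers and cofibers in $\ms C^{\Delta^{1}}$ are taken vertex by vertex, each of these subcategories is closed under shifts and cones: for instance, a cone between two objects of the form $[x\to 0]$ and $[x'\to 0]$ has second vertex $\cof(0\to 0)=0$, hence lies in $\mb C_{/0}$, and analogously for the other two. This justifies that the decomposition statement makes sense at the level of triangulated subcategories.

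Next I would verify the Hom-vanishing conditions using the adjunctions, which descend from $\ms C$ and $\ms C^{\Delta^{1}}$ to the homotopy categories. For $(\mb C_{s},\mb C_{/0})$, given $[z\stackrel{\1}{\to}z]=s(z)$ and $[x\to 0]$, the adjunction $d_{0}\dashv s$ gives
\[
\Hom_{\ho{\ms C^{\Delta^{1}}}}\bigl([x\to 0],\,s(z)\bigr)\cong\Hom_{\ho{\ms C}}\bigl(d_{0}[x\to 0],z\bigr)=\Hom_{\ho{\ms C}}(0,z)=0.
\]
For $(\mb C_{0/},\mb C_{s})$, the adjunction $s\dashv d_{1}$ yields
\[
\Hom_{\ho{\ms C^{\Delta^{1}}}}\bigl(s(z),\,[0\to y]\bigr)\cong\Hom_{\ho{\ms C}}\bigl(z,d_{1}[0\to y]\bigr)=\Hom_{\ho{\ms C}}(z,0)=0.
\]

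For the decomposition triangles, I would use the unit and counit of the two adjunctions. Given any object $f=[a\to b]$ of $\ms C^{\Delta^{1}}$, the unit $f\to sd_{0}f$ of $d_{0}\dashv s$ produces a morphism $[a\to b]\to[b\stackrel{\1}{\to}b]$, whose fiber, computed pointwise, is $[\fib f\to 0]\in\mb C_{/0}$. This yields the distinguished triangle
\[
[\fib f\to 0]\longrightarrow [a\to b]\longrightarrow [b\stackrel{\1}{\to}b]
\]
exhibiting the first semiorthogonal decomposition. Dually, the counit $sd_{1}f\to f$ of $s\dashv d_{1}$ gives a morphism $[a\stackrel{\1}{\to}a]\to[a\to b]$ whose pointwise cofiber is $[0\to\cof f]\in\mb C_{0/}$, producing
\[
[a\stackrel{\1}{\to}a]\longrightarrow[a\to b]\longrightarrow[0\to\cof f],
\]
which gives the second semiorthogonal decomposition.

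The only delicate point I anticipate is identifying the unit and counit morphisms concretely as the squares depicted above and verifying that their fibers/cofibers in $\ms C^{\Delta^{1}}$ match the pointwise description; this amounts to unwinding the pushout/pullback square definition of (co)fibers in the stable infinity category $\ms C^{\Delta^{1}}$ using the fact that the evaluation functors $d_{0}$ and $d_{1}$ preserve finite limits and colimits (they are both left and right adjoints).
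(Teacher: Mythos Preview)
Your proof is correct and follows essentially the same approach as the paper: both use the adjunction $d_{0}\dashv s$ to obtain the Hom-vanishing and the unit $f\to s d_{0}f$ to produce the decomposition triangle for $\langle \mb C_{s},\mb C_{/0}\rangle$. The only cosmetic difference is that for the second decomposition $\langle \mb C_{0/},\mb C_{s}\rangle$ the paper passes to the opposite category $\ms C^{\mathrm{op}}$ and reapplies the first argument, whereas you argue directly via the counit of $s\dashv d_{1}$; these are equivalent.
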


\begin{proof}
Note that an adjoint pair of functors between infinity categories induces the adjoint pair between corresponding homotopy categories by \cite[Proposition 5.2.2.12]{MR2522659}. 

We first prove $\ho{\ms C^{\Delta^{1}}}=\<	\mb C_{s}, \mb C_{/0}\>$. 
Put $[ b \colon y \to 0 ] \in \mb C_{/0}$ and $ [\1_z \colon z \to z] \in \mb C_s $. 
Then we see 
\[
\Hom_{\ho{\ms C^{\Delta^1}}}(b, \1_z)=\Hom_{\ho{\ms C^{\Delta^1}}}(b, s(z) ) \cong 
\Hom_{\ho{\ms C}}(d_0 (b), z) = 
\Hom_{\ho{\ms C}}(0 , z)=0. 
\]
Thus $\Hom_{\ho{\ms C^{\Delta^{1}}}}(b, \1_{z}) =0$ for any $b \in \ms C_{/0}$ and any $\1_{z} \in \mb C_{s}$.

Take an object $[f \colon x \to y]\in \ho {\ms C^{\Delta^1}}$ arbitrary. 
Then the adjunction $d_{0}\dashv s$ implies the canonical morphism $\tau \colon f \to s \circ d_{0}(f)  = \1_{y}$ in $\ho{\ms C^{\Delta^{1}}}$, and we obtain the following distinguished triangle in $\ho{\ms C^{\Delta^{1}}}$:
\begin{equation}\label{fiberseq}
\xymatrix{
\fib f	\ar[r]\ar[d]_{}\ar[rd]	&	x	\ar[r]^{f} \ar[d]_{f}\ar[rd]|{f}		&	y\ar[d]^{\mr{id}}	\\
0		\ar[r]		&	y	\ar[r]_{\mr{id}}			&	y.	\\
}
\end{equation}
Hence we have $\ho{\ms C^{\Delta^{1}}}=\<	\mb C_{s}, \mb C_{/0}\>$

Let $\ms D$ be the opposite category $\ms C^{\mr{op}}$ of $\ms C$. 
Then $\ms D$ is also stable (see also \cite[\S 1.1.1]{higheralgebra}) and any colimit in $\ms D$ is equivalent to the limit in $\ms C$. 
Since $\ms D^{\Delta^{1}}= \mr{Fun}(\Delta^{1}, \ms D)$ is equivalent to the opposite category of $\ms C^{\Delta^{1}}$ by the canonical equivalence $(\Delta^{1})^{\mr{op}} \cong \Delta^{1}$, the functor $d_{0} \colon \ms D^{\Delta^{1}} \to  \ms D$ is the opposite functor of $d_{1} \colon \ms C^{\Delta^{1}} \to \ms C$. 
Hence the argument above for the adjoint pair $d_{0} \dashv s$ also implies $\ho{\ms C^{\Delta^{1}}}=\<\mb C_{0/}, \mb C_{s}\>$. 
\end{proof}

\begin{rmk}\label{convention}
Note that the functor $s \colon \ho{\ms C} \to \mb C_s$ gives an equivalence. 
Since the natural projection $d_{1} \colon \ms C_{/0} \to \ms C$ is a trivial Kan fibration by \cite[\S 1.2.12]{MR2522659}, 
$d_{1}$ has a section denoted by $j_{!} \colon \ms C \to \ms C_{/0}$. 
Similarly a section of $d_{0} \colon \ms C_{0/} \to \ms C$ is denoted by $j_{*} \colon \ms C \to \ms C_{0/}$. 
Taking homotopy categories, the functor $j_{!} \colon \ho{\ms C} \to \mb C_{/0}$ (resp. $j_{*} \colon \ho{\ms C} \to \mb C_{0/}$) gives the inverse functor of the restriction of $d_{1} \colon \mb C_{/0} \to \ho{\ms C}$ (resp. $d_{0} \colon \mb C_{0/} \to \ho{\ms C}$). 
Throughout this article we always identify $\mb C_{/0}, \mb C_{0/}$ and $\mb C_s$ with $\ho{\ms C}$ via $j_{!}$, $j_{*}$ and $s$ respectively. 
In particular we have the following explicit descriptions of $j_{*}$ and $j_{!}$: 
\begin{equation}\label{eq:inverse}
j_{!} \colon \ho{\ms C} \stackrel{\sim}{\to} \mb C_{/0}; j_{!}(x)=[x\to 0],\mbox{ and  } 
j_{*} \colon \ho{\ms C} \stackrel{\sim}{\to} \mb C_{0/}; j_{*}(x)=[0\to x]. 
\end{equation}
\end{rmk}

\subsection{Serre functors on $\ho {\ms C^{\Delta^{1}}}$}
We observe the Serre functor on $\ho{\ms C^{\Delta^{1}}} $ under certain assumption on $\ms C$. 
As a consequence if $\ms C$ is the infinity category $\ms D_{\mr{coh}}^{b}(X)$ of a smooth projective variety $X$, we have an explicit description of the Serre functor on $\ho {\ms C^{\Delta^{1}}}$. 
Before the argument, recall that $\mb k$-linear category $\mb D$ is said to be \textit{finite} if any vector space $\Hom_{\mb D}(x,y)$ is finite dimensional for all $x,y \in \mb D$.

\begin{prop}\label{pr:k-linear}
Let $\ms D$ be a dg category over a field $\mb k$. 
\begin{enumerate}
\item Then the homotopy category $\ho{\mr{N}_{\mr {dg}}(\ms D)^{\Delta^{1}}}$ is a $\mb k$-linear category. 
\item If $\mr{N}_{\mr {dg}}(\ms D)$ is stable and $\ho {\mr{N}_{\mr {dg}}(\ms D)}$ is finite, then $\ho {\mr{N}_{\mr {dg}}(\ms D)^{\Delta^{1}}}$ is also finite. 
\end{enumerate}
\end{prop}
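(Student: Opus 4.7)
The plan is to build on the $\mb k$-linearity of $\ho{\ms C} = \ho{\mr{N}_{\mr{dg}}(\ms D)}$ itself, which is immediate from the dg-nerve construction: morphisms in $\ho{\mr{N}_{\mr{dg}}(\ms D)}$ are classes in $H^{0}(\Map_{\ms D}(x,y))$, a $\mb k$-vector space since $\Map_{\ms D}(x,y)$ is a complex of $\mb k$-modules.

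For (1), I would unpack morphisms in $\ho{\ms C^{\Delta^{1}}}$ directly from the dg-nerve. A $1$-simplex of $\ms C^{\Delta^{1}}$ from $f=[x\to y]$ to $g=[z\to w]$ is a pair $(h_{1},h_{0})$ of $2$-simplices of $\ms C$ fitting into the square (\ref{eq:highermorphism}); by Remark \ref{rmk:mugen-dg} each $h_{i}$ unpacks as a tuple of cycles in $Z^{0}_{\ms D}$ together with a degree $-1$ chain homotopy in $\Map_{\ms D}^{-1}$, subject to $\mb k$-linear relations of the form $\delta h = f_{0}f_{2}-f_{1}$. All constituent data therefore lies in $\mb k$-vector spaces with $\mb k$-linear relations; scaling componentwise by $\lambda \in \mb k$ preserves the relations and descends through the homotopy equivalence of Proposition-Definition \ref{dfn:homotpy}, yielding a well-defined $\mb k$-action on $\Hom_{\ho{\ms C^{\Delta^{1}}}}(f,g)$. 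Bilinearity of composition follows by inspecting the horn-filler construction (compare Lemma \ref{lm:difference}): scaling the components of one $1$-simplex scales the chosen $3$-simplex filler accordingly.

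For (2), assume in addition that $\ms C$ is stable and $\ho{\ms C}$ is finite. Then Lemma \ref{lm:SOD} applies, giving the semiorthogonal decomposition $\ho{\ms C^{\Delta^{1}}}=\langle\mb C_{s},\mb C_{/0}\rangle$. Applying the cohomological functor $\Hom(-,g)$ to the distinguished triangle $j_{!}(\fib f)\to f\to s(d_{0}f)$ from (\ref{fiberseq}) produces a long exact sequence sandwiching $\Hom(f,g)$ between
\[
\Hom_{\ho{\ms C^{\Delta^{1}}}}(s(d_{0}f),g)\cong \Hom_{\ho{\ms C}}(d_{0}f,d_{1}g)
\]
(via the adjunction $s\dashv d_{1}$) and
\[
\Hom_{\ho{\ms C^{\Delta^{1}}}}(j_{!}(\fib f),g)\cong \Hom_{\ho{\ms C}}(\fib f,\fib g),
\]
where the second identification follows from applying $\Hom(j_{!}(\fib f),-)$ to the triangle $j_{!}(\fib g)\to g\to s(d_{0}g)$ and annihilating the $s(d_{0}g)$ term by semi-orthogonality $\Hom(\mb C_{/0},\mb C_{s})=0$. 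Hence
\[
\dim_{\mb k}\Hom_{\ho{\ms C^{\Delta^{1}}}}(f,g) \leq \dim_{\mb k}\Hom_{\ho{\ms C}}(d_{0}f,d_{1}g) + \dim_{\mb k}\Hom_{\ho{\ms C}}(\fib f,\fib g),
\]
both finite by hypothesis.

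The principal obstacle lies in part (1): verifying rigorously that the componentwise $\mb k$-action on $2$-simplex data descends to a well-defined and composition-compatible action on equivalence classes requires a somewhat painful inspection of higher coherences. A conceptually cleaner alternative would be to identify $\mr{N}_{\mr{dg}}(\ms D)^{\Delta^{1}}$ with $\mr{N}_{\mr{dg}}(\ms D^{[1]})$ for an appropriate morphism dg $\mb k$-category $\ms D^{[1]}$, after which both statements would follow at once from the dg-nerve formalism already used for $\ho{\ms C}$.
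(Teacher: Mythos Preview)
Your proposal is correct and matches the paper's proof closely: for (1) the paper likewise unpacks a morphism $f\to g$ as a 5-tuple $(\tau_{1},\tau_{0},\psi,h_{1},h_{0})$ of elements in the mapping complexes with $\mb k$-linear constraints, and then carries out precisely the ``painful inspection'' you anticipate by writing out the 7-tuple homotopy data (from a $2$-simplex $\Delta^{2}\times\Delta^{1}\to\ms C$) to see that the null-homotopic tuples form a linear subspace; for (2) the paper uses exactly the triangle $j_{!}(\fib f)\to f\to s(d_{0}f)$, the adjunction $s\dashv d_{1}$, and the identification $\Hom_{\ho{\ms C^{\Delta^{1}}}}(j_{!}(\fib f),g)\cong\Hom_{\ho{\ms C}}(\fib f,\fib g)$ that you derive.
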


\begin{proof}
Put $\ms C= \mr{N}_{\mr{dg}}(\ms D)$ and let $f$ and $g$ be in $\ms C^{\Delta^{1}}$. 
Then $f$ is in $Z^{0}_{\ms D}(d_{1}f, d_{0}f)$ and so does $g$ by Remark \ref{rmk:mugen-dg}. 
Note that an $n$-simplex of $\ms C^{\Delta^{1}}$ is a morphism $\Delta^{n} \times \Delta^{1} \to \ms C$ as simplicial sets. 
According to Proposition-Definition \ref{dfn:dg-nerve}, a morphism from $f $ to $g$ in $\ms C^{\Delta^{1}}$ is a 5-tuple $(\tau_{1}, \tau_{0}, \psi, h_{1}, h_{0})$ where 
\begin{itemize}
\item $\tau_{i} \in  Z^{0}_{\ms D}(d_{i}f, d_{i}g)$, $\psi \in Z^{0}_{\ms D}(d_{1}f, d_{0}g) $, 
\item  $h_{1} \in \Map_{\ms D}^{-1}(d_{1}f, d_{0}g)$  with $\delta( h_{1})=g\tau_{1}-\psi$, and 
\item  $h_{0} \in \Map_{\ms D}^{-1}(d_{1}f, d_{0}g)$  with $\delta( h_{0})=\tau_{0}f-\psi $. 
\end{itemize}
Let $ \mr{Hom}(f,g)$ be the set of 5-tuples $(\tau_{1}, \tau_{0}, \psi, h_{1}, h_{0}) $ satisfying the above relation. 
Then $\mr{Hom}(f,g)$ is naturally a $\mb k$-linear space via termwise addition and $\mb k$-action.

By Proposition-Definition \ref{dfn:homotpy}, we see that two 5-tuples $	(\tau_{1}, \tau_{0}, \psi, h_{1}, h_{0}) $ and $	(\tilde{\tau}_{1}, \tilde{\tau}_{0}, \tilde{\psi}, \tilde{h}_{1}, \tilde{h}_{0}) $ are equivalent if there exists a $7$-tuple 
$h_{\overline{012}} \in \Map_{\ms D}^{-1}(d_{1}f, d_{1}g)$, $\{ h_{\overline{01} \underline 2}, h_{\overline 0 \underline {12}}   	\} \subset \Map_{\ms D}^{-1}(d_{1}f, d_{0}g)$, $h_{\underline{012}} \in \Map_{\ms D}^{-1}(d_{0}f, d_{0}g)$ 
and $\{ h_{\overline {012} \underline 2}, h_{\overline{01} \underline 1 \underline 2}, h_{\overline{0} \underline{012} } \} \subset  \Map_{\ms D}^{-2}(d_{1}f, d_{0}g)$ satisfying
\begin{itemize}
\item $\delta (h_{\overline{012}})=  \tau_{1} - \tilde \tau_{1}$, 
\item $\delta (h_{\underline{012}})=  \tau_{0}-\tilde \tau_{0}$, 
\item $\delta (h_{\overline{01}\underline{2}})= g  \tau _{1}- \tilde \psi$, 
\item $\delta (h_{\overline{0}\underline{12}})=  \psi - \tilde \psi  $, 
\item $\delta (h_{\overline{012}\underline{2}})=-h_{\overline{01}\underline{2}}+ g  h_{\overline{012}} + \tilde {h}_{1}  $,
\item $\delta (h_{\overline{01}\underline{12}})= -h_{\overline{01}\underline{2}} + h_{1} +  h_{\overline{0}\underline{12}} $, and 
\item $\delta (h_{\overline0 \underline{012}})= - \tilde h_{0} +  h_{0} + h_{\overline{0}\underline{12}} - h_{\underline{012}}  f$. 
\end{itemize}
Since the set of collections $(\tau_{1}, \tau_{0}, \psi, h_{1}, h_{0})$ which are equivalent to zero of $\mr{Hom}(f,g)$ is a $\mb k$-linear subspace of $\mr{Hom}(f,g)$, $\ho {\ms C^{\Delta^{1}}}$ is a $\mb k$-linear category. 

To prove the second assertion, 
consider the semiorthogonal decomposition of $\ho {\ms C^{\Delta^{1}}} = \<\mb D_{0}^{L}, \mb D_{0}^{R}\>$ in Lemma \ref{lm:SOD}. 
Then we have the distinguished triangle 
\[
\xymatrix{
j_{!} \circ \fib (f)	\ar[r]	&	f	\ar[r]	&	s \circ d_{0}(f)	\ar[r]	&	j_{!} \circ \fib (f)[1]
}
\]
for any $f \in \ms C^{\Delta^{1}}$, and the exact sequence of $\mb k$-vector spaces
\begin{equation}\label{eq:finite}
\xymatrix{
\Hom_{\ho{\ms C^{\Delta^{1}}}}(s \circ d_{0}f, g)	\ar[r]	&	\Hom_{\ho{\ms C^{\Delta^{1}}}}(f, g) \ar[r]	&	\Hom_{\ho{\ms C^{\Delta^{1}}}}\left(j_{!} \circ \fib (f), g\right) 
}
\end{equation}
for any $g $ in $\ms C^{\Delta^{1}}$. 
The right term in (\ref{eq:finite}) is isomorphic to $\Hom_{\ho{\ms C}}(\fib f, \fib g)$. 
Moreover the left term in (\ref{eq:finite}) is isomorphic to $\Hom_{\ho{\ms C}}(d_{0}f, d_{1}g)$ since the functor $d_{1} $ is the right adjoint of $s$. 
Then the assumption implies that 
the category $\ho{\mr{N}_{\mr {dg}}(\ms D)^{\Delta^{1}}}$ is finite.  
\end{proof}

\begin{prop}\label{pr:BK}
Let $\ms D$ be a $\mb k$-linear differential graded category and put $\ms C= \mr N_{\mr{dg}}(\ms D)$. 
Suppose that $\ms C$ is stable and $\ho {\ms C}$ is finite. 
If $\ho {\ms C}$ has a Serre functor $S_{\ms C}$ then $\ho {\ms C^{\Delta^{1}}}$ has also a Serre functor $S_{\ms C^{\Delta^{1}}}$ given by 
\begin{equation}
S_{\ms C^{\Delta^{1}}}(f) = [S_{\ms C}(u) \colon S_{\ms C}(d_{0}f) \to S_{\ms C}(\cof f)],  \label{Serre}
\end{equation}
where $u$ is the universal morphism $y \to \cof f$ in $\ho{\ms C}$. 
\end{prop}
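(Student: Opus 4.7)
The plan is to prove Serre duality for the formula (\ref{Serre}) by reducing, via the two semiorthogonal decompositions of $\ho{\ms C^{\Delta^{1}}}$ established in Lemma \ref{lm:SOD}, to Serre duality for $S_{\ms C}$ on $\ho{\ms C}$. First, I would check that (\ref{Serre}) produces a well-defined exact endofunctor of $\ho{\ms C^{\Delta^{1}}}$: a morphism $\varphi\colon f\to g$ in $\ho{\ms C^{\Delta^{1}}}$ induces morphisms $d_{0}\varphi\colon d_{0}f\to d_{0}g$ and $\cof\varphi\colon \cof f\to \cof g$ commuting with the universal maps $u_{f}$ and $u_{g}$, so applying $S_{\ms C}$ levelwise yields a morphism $S_{\ms C^{\Delta^{1}}}(\varphi)$, and exactness follows from the exactness of $d_{0}$, $\cof$, and $S_{\ms C}$.

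Next, I would apply $S_{\ms C^{\Delta^{1}}}$ to the distinguished triangle $j_{!}(\fib f)\to f\to s(d_{0}f)$ coming from the semiorthogonal decomposition $\ho{\ms C^{\Delta^{1}}}=\<\mb C_{s},\mb C_{/0}\>$. Using the identifications $S_{\ms C^{\Delta^{1}}}(j_{!}(\fib f))=[0\to S_{\ms C}(\fib f[1])]=j_{*}(S_{\ms C}(\cof f))$ and $S_{\ms C^{\Delta^{1}}}(s(d_{0}f))=[S_{\ms C}(d_{0}f)\to 0]=j_{!}(S_{\ms C}(d_{0}f))$, one obtains a distinguished triangle
\begin{equation}\label{planSerretriangle}
j_{*}(S_{\ms C}(\cof f))\to S_{\ms C^{\Delta^{1}}}(f)\to j_{!}(S_{\ms C}(d_{0}f)).
\end{equation}

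The third step is to apply $\Hom_{\ho{\ms C^{\Delta^{1}}}}(g,-)$ to (\ref{planSerretriangle}) and $\Hom_{\ho{\ms C^{\Delta^{1}}}}(-,g)$ to the original triangle $j_{!}(\fib f)\to f\to s(d_{0}f)$. The outer terms can be evaluated using the adjunctions $d_{0}\dashv s\dashv d_{1}$ together with the fiber-sequence description of $\Map_{\ms C^{\Delta^{1}}}$: one gets $\Hom(g,j_{*}(S_{\ms C}(\cof f)))\cong \Hom_{\ho{\ms C}}(\cof g,S_{\ms C}(\cof f))$, $\Hom(g,j_{!}(S_{\ms C}(d_{0}f)))\cong \Hom_{\ho{\ms C}}(d_{1}g,S_{\ms C}(d_{0}f))$, $\Hom(s(d_{0}f),g)\cong \Hom_{\ho{\ms C}}(d_{0}f,d_{1}g)$, and $\Hom(j_{!}(\fib f),g)\cong \Hom_{\ho{\ms C}}(\fib f,\fib g)$. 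Applying Serre duality in $\ho{\ms C}$ and using $\fib(-)[1]=\cof(-)$, the dual of the first sequence has outer terms $\Hom(\fib f,\fib g)^{\ast}$ and $\Hom(d_{0}f,d_{1}g)^{\ast}$, matching termwise the duals of the outer terms of the second sequence. A five-lemma argument, once the compatibility of the connecting morphisms is tracked, produces the desired natural isomorphism $\Hom_{\ho{\ms C^{\Delta^{1}}}}(f,g)\cong \Hom_{\ho{\ms C^{\Delta^{1}}}}(g,S_{\ms C^{\Delta^{1}}}(f))^{\ast}$.

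The main technical obstacle will be verifying the compatibility of the connecting homomorphisms so that the five-lemma yields a natural, rather than merely abstract, isomorphism; this amounts to tracing how the universal morphism $u\colon d_{0}f\to \cof f$ interacts with Serre duality on $\ho{\ms C}$ and the adjunctions $d_{0}\dashv s\dashv d_{1}$. A secondary technical point is the computation of $\Map_{\ms C^{\Delta^{1}}}$ between objects supported in the three subcategories $\mb C_{s}$, $\mb C_{/0}$, $\mb C_{0/}$, which is handled by the pullback description of mapping spaces in the stable infinity category $\ms C^{\Delta^{1}}$.
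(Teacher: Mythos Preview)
Your outline is correct and close in spirit to the paper's proof: both arguments reduce Serre duality on $\ho{\ms C^{\Delta^{1}}}$ to Serre duality on $\ho{\ms C}$ via the semiorthogonal decompositions of Lemma~\ref{lm:SOD}, and your identifications of the four outer Hom-groups (using $\cof \dashv j_{*}$, $d_{1}\dashv j_{!}$, $s\dashv d_{1}$, and $j_{!}\dashv \fib$) are all right.

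The execution differs in one substantive way. You first \emph{define} $S_{\ms C^{\Delta^{1}}}$ by formula~(\ref{Serre}), apply it to the triangle $j_{!}(\fib f)\to f\to s(d_{0}f)$, and then compare two long exact sequences by the five lemma; the compatibility of connecting maps is exactly the technical point you flag. The paper instead first invokes \cite[Proposition~3.8]{MR1039961} to guarantee abstractly that a Serre functor exists, and then identifies the representing object of $\Hom_{\ho{\ms C^{\Delta^{1}}}}(f,-)^{\vee}$ by restricting to the two pieces of the decomposition $\langle \mb C_{0/},\mb C_{s}\rangle$: the restriction to $\mb C_{s}$ is represented by $E=s(S_{\ms C}(y))$ and the restriction to $\mb C_{0/}$ by $F=j_{*}(S_{\ms C}(\cof f))$; the gluing of these two pieces into the object $S_{\ms C}(u)$ is then carried out by appealing to the proof of \cite[Theorem~2.10]{MR1039961}. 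In effect, the Bondal--Kapranov machinery packages precisely the naturality-of-connecting-morphisms check that you isolate as the main obstacle, so the paper's route is slightly more efficient while yours is more self-contained. Either way the content is the same; if you pursue your five-lemma argument, the key computation is that the boundary map in $\Hom(g,-)$ applied to (\ref{planSerretriangle}) is, after the identifications, the Serre dual of composition with the universal morphism $u$, which matches the boundary in the dual of $\Hom(-,g)$ applied to $j_{!}(\fib f)\to f\to s(d_{0}f)$.
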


\begin{proof}
According to Bondal-Kapranov \cite[Proposition 3.8, Theorem 2.10]{MR1039961}, 
let $\mca B$ be the essential image $\mb C_{s}$ of $s\colon \ho {\ms C} \to \ho {\ms C^{\Delta^{1}}}$. 
Since the right (resp. left) adjoint of $s$ is $d_{1}$ (resp. $d_{0}$), $\mca B$ is right and left admissible in $\ho {\ms C^{\Delta^{1}}}$. 
Moreover the right adjoint of the inclusion $j_{*} \colon \ho {\ms C }\to \mca B^{\perp} \to \ho{\ms C^{\Delta^{1}}}$ is given by  $d_{0} \colon \ho{ \ms C^{\Delta^{1}} }\to \ho {\ms C}$. 
Thus $\mca B^{\perp}$ is also right and left admissible. 
Since $\mca B$ and $\mca B^{\perp}$ are both equivalent to $\ho {\ms C}$, the triangulated category $\ho {\ms C^{\Delta^{1}}}$ has the Serre functor by \cite[Proposition 3.8]{MR1039961}.

It is enough to show that the functor $ h(-)= \Hom_{\ho  {\ms C^{\Delta^{1}}}}(f, -)^{\vee} \colon \ho {\ms C^{\Delta^{1}}} \to \mr {Vect}_{\mb k}$ is represented by $S_{\ms C}(u)$ in (\ref{Serre}). 
Since $\mca B \cong \ho {\ms C}$ has the Serre functor, the contravariant functor  
$\Hom_{\ho {\ms C}}(x, -)^{\vee} \colon \ho {\ms C} \to \mr {Vect}_{\mb k}$ 
is represented by $S_{\ms C}(x)$ where $\mr {Vect}_{\mb k}$ is the category of $\mb k$-vector spaces.

Take the semiorthogonal decomposition $\ho {\ms C ^{\Delta^{1}}} = \< \mca B, {}^{\perp}\mca B   \>=\<\mb C_{s}, \mb C_{/0} \>$. 
Then we obtain the following distinguished triangle in $\ho{\ms C^{\Delta^{1}}}$:
\[
\xymatrix{
\fib f	\ar[r]\ar[d]	&	x	\ar[r]\ar[d]_{f}	&	y	\ar[r]\ar[d]_{\1}	&	\fib f[1]\ar[d]	\\
0	\ar[r]			&	y		\ar[r]	&	y	\ar[r]	&	0
}
\]
Then the restriction $h|_{\mca B}$ 
is represented by the object $E= [\1 \colon S_{\ms C}(y) \to S_{\ms C}(y)  ]$ in $\ho{\ms C^{\Delta^{1}}}$. 

Take the semiorthogonal decomposition $\< \mca B^{\perp}, \mca B \> = \<\mb C_{0/} , \mb C_{s}	\>$. 
Then we obtain the following:
\[
\xymatrix{
x	\ar[r]\ar[d]_{\1}		&	x	\ar[r]\ar[d]_{f}		&	0	\ar[r]\ar[d]	&	x[1] \ar[d]_{\1[1]}	\\		
x	\ar[r]	&	y\ar[r]	&	\cof f	 \ar[r]	&	x[1]
}
\]
The same argument above implies that the restriction $h|_{\mca B^{\perp}}$ of $h$ to $\mca B^{\perp}$ is represented by the object $F= j_{*}\left(S_{\ms C}(\cof f)\right) = [ 0 \to S_{\ms C}(\cof f)]$ in $\ho{\ms C^{\Delta^{1}}}$.

The adjunction $d_{0} \dashv s$ implies that the restriction functor of the representable functor by the morphism $E$ to $\mca B^{\perp}$ is represented by $ F' = [0 \to S_{\ms C}(y)]$. 
Namely we have 
\begin{equation}
\label{BK} \Hom_{\ho {\ms C^{\Delta^{1}}}}(- , E)|_{\mca B^{\perp}} \cong \Hom_{\mca B^{\perp}}(-, F'). 
\end{equation}
Then evaluation of (\ref{BK}) at $F'$ implies the canonical morphism $\gamma \colon F' \to E$ which satisfies $d_{0}(\gamma)=\1$. 

Let $\varphi \colon F' \to F$ be the morphism defined by $h(\gamma )(\1_{E})  \in h(F') \cong \Hom_{\ho {\ms C^{\Delta^{1}}}}(F',F)$. 
Since $h(E)$ (resp. $h(F')$) is canonically isomorphic to $\Hom_{\ho {\ms C}}(y,y)$ (resp. $\Hom_{\ho {\ms C}}(y,\cof f)$), we see $d_{0}\varphi = S_{\ms C}(u)$ where $u$ is  a universal morphism $u \colon y \to \cof f$ in $\ms C$.  
Then we obtain the following distinguished triangle in $\ho {\ms C^{\Delta^{1}}}$:
\[
\xymatrix{
0	\ar[r]^{d_{1}\varphi}\ar[d]_{F'}	&	0	\ar[r]\ar[d]_{F}	&	0	\ar[r]\ar[d]	&	0\ar[d]	_{F'[1]}\\
S_{\ms C}(y)	\ar[r]^{d_{0}\varphi}	&	S_{\ms C}(\cof f)\ar[r]	& S_{\ms C}(x)[1] \ar[r]	&	S_{\ms C}(y)[1]
}
\]
Put $F''=  [0 \to S_{\ms C}(x)[1]]$. 
Let $\delta \colon F'' \to E[1]$ be the composite $F'' \to F'[1] \to E[1]$ and $X$ the mapping cone of $\delta$. 
Then $X$ is isomorphic to $S_{\ms C}(u)$ in (\ref{Serre}). 
Hence the functor $h(-) =\Hom_{\ho  {\ms C^{\Delta^{1}}}}(f, -)^{\vee}$ is represented by $S_{\ms C}(u)$ via the proof of \cite[Theorem 2.10]{MR1039961}. 
\end{proof}

\begin{rmk}
If the Serre functor $S_{\mb D}$ on a $\mb k$-linear triangulated category $\mb D$ is isomorphic to the $d$-times shifts $[d]$, then $\mb D$ is said to be $d$-dimensional Calabi-Yau category. 
Moreover if $k$-times composite $S_{\mb D}^{k}$ is isomorphic to the shifts $[d]$, then $\mb D$ is said to be $d/k$-dimensional Calabi-Yau or a fractional Calabi-Yau. 
One of the simplest example of fractional Calabi-Yau categories is 
the bounded derived category of the finite dimensional representations of $A_{2}$-quiver. 
Such a category is obtained by $\ho{\ms D_{\mr{coh}}^{b}(\Spec \mb k)}$ of a field $\mb k$ (See also Remark \ref{rmk:relevant}). 

From Proposition \ref{pr:BK}, we see 
\[
S_{\ms C^{\Delta^{1}}}^{3}([f \colon x \to y]) = \left[S_{\ms C}^{3} (f[1]) \colon S^{3}_{\ms C} (x[1]) \to S^{3}_{\ms C}(y[1])\right]. 
\]
Thus $\ho {\ms C^{\Delta^{1}}}$ is $(3d+1)/3$-dimensional Calabi-Yau category if $\ho {\ms C}$ is $d$-dimensional Calabi-Yau. 
\end{rmk}

\section{An observation on  Problem \ref{problem1}}

\subsection{Notation for stability conditions}
Let $\mb D$ be a triangulated category such that the rank of Grothendieck group $K_{0}(\mb D)$ of $\mb D$ is finite. 
We follow notation and a basic definition of stability conditions on a triangulated category $\mb D$ from the original article \cite{MR2373143} due to Bridgeland. 
For instance $\Stab {\mb D}$ is the set of locally finite stability conditions on $\mb D$. 
The central charge of $\sigma \in \Stab {\mb D}$ is denoted by $Z$. 
$\mca A$ and $\mca P$ are respectively the heart and the slicing of $\sigma$. 
The set $\Stab {\mb D}$ has a topology and each non-empty connected component is a complex manifold whose dimension is smaller than or equal to the rank of $K_0(\mb D)$. 
We also recall that $\Stab {\mb D}$ has a left action of $\Aut {\mb D}$ and a right action of the universal cover $\widetilde{\mr {GL}}^+_2(\bb R)$ of $\mr{GL}^+_2(\bb R)$. 
The right action of $\widetilde{\mr {GL}}^+_2(\bb R)$ is continuous.

A stability condition $\sigma$ on $\mb D$ is said to be \textit{full} if the tangent space of $\Stab{ \mb D}$ at $\sigma$ has the maximal dimension $\rank K_0(\mb D)$. 
A connected component of full stability conditions on $\mb D$ is said to be a \textit{full component}.  

One of basic properties of stability conditions is the \textit{support property}: 

\begin{dfn}[Support property]\label{supportproperty}
Let $\mb D$ be a triangulated category with $\rank K_{0}(\mb D) < \infty$, 
and let $\|	- \| \colon K_{0}(\mb D) \otimes \bb R \to \bb R$ be a norm. 
Then a stability condition $\sigma$ on $\mb D$ satisfies the \textit{support property} if the following holds:
\[
\exists C > 0 \mbox{ such that } \sup
\left\{  \frac{\|E \|}{|Z(E)|}  \middle|  E\mbox{ is $\sigma$-semistable}   \right\} \leq C. 
\]
\end{dfn}
Since any norm on $K_0(\mb D)\otimes \bb R$ is equivalent, the support property is independent of the choice of the norm. 
The following lemma gives us a transparent understanding of fullness and support property.

\begin{lem}[{\cite[Appendix B.4]{MR2852118}}]\label{BMb.4}
Let $\mb D$ be a triangulated category. 
Assume $\rank K_0 (\mb D)$ is finite. 
Then a locally finite stability condition $\sigma$ is full if and only if $\sigma$ has the support property. 
\end{lem}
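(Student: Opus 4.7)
The plan is to relate fullness of $\sigma$ to the openness of the forgetful map to the space of central charges, and then to read off the support property as the quantitative form of such openness. Concretely, let $\mca Z \colon \Stab{\mb D} \to \Hom(K_{0}(\mb D), \bb C)$ denote the continuous map $\sigma \mapsto Z_{\sigma}$. By Bridgeland's main theorem, $\mca Z$ is a local homeomorphism onto its image, so $\sigma$ is full if and only if $\mca Z$ sends a neighborhood of $\sigma$ in $\Stab{\mb D}$ onto a neighborhood of $Z_{\sigma}$ in the complex vector space $\Hom(K_{0}(\mb D), \bb C)$ (whose complex dimension equals $\rank K_{0}(\mb D)$). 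Thus the lemma amounts to saying that $Z_{\sigma}$ is an interior point of $\Im \mca Z$ exactly when the support bound holds.

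For the implication \emph{support $\Rightarrow$ full}, I would invoke Bridgeland's deformation theorem in the refined form due to Kontsevich--Soibelman/Bayer--Macr\`\i: given $\sigma = (Z, \mca P)$ with support constant $C$ (relative to a fixed norm $\|-\|$ on $K_{0}(\mb D) \otimes \bb R$), one shows that for any linear form $W \in \Hom(K_{0}(\mb D), \bb C)$ sufficiently close to $Z$ (quantitatively, $\|W - Z\|_{\mr{op}}$ less than a constant times $C^{-1}\sin(\pi\epsilon)$ with $\epsilon$ the slicing width chosen via local finiteness), there is a unique stability condition $\sigma' = (W, \mca P')$ near $\sigma$ with $\mca P'$ obtained from $\mca P$ by small perturbations of the phases. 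Here the support property is exactly what guarantees that the phases of all semistable objects move continuously and uniformly, so that the new slicing $\mca P'$ is a genuine slicing. This shows $Z_{\sigma}$ has an open neighborhood in $\Im \mca Z$, so $\sigma$ is full.

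For the converse \emph{full $\Rightarrow$ support}, I would argue by contradiction. Suppose $\sigma$ is full but the support property fails. Then there exists a sequence of $\sigma$-semistable objects $E_{n}$ of phase $\phi_{n}$ with $\|E_{n}\|/|Z(E_{n})| \to \infty$. Normalizing, set $v_{n} = [E_{n}]/\|E_{n}\| \in K_{0}(\mb D)_{\bb R}$, so $\|v_{n}\| = 1$ and $Z(v_{n}) \to 0$. Because $K_{0}(\mb D)_{\bb R}$ is finite-dimensional, the unit sphere is compact, and we may assume $v_{n} \to v_{\infty}$ with $\|v_{\infty}\| = 1$ and $Z(v_{\infty}) = 0$. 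Using fullness, choose $W \in \Hom(K_{0}(\mb D), \bb C)$ arbitrarily close to $Z$ with $W(v_{\infty}) \neq 0$ having a phase far from any accumulation point of $\{\phi_{n}\}$. Fullness gives a stability condition $\sigma' = (W, \mca P')$ near $\sigma$, and by the continuity of slicings in the generalized metric on $\Stab{\mb D}$ the phases of the $E_{n}$ (or of their $\sigma'$-HN factors) must remain within a prescribed window of $\phi_{n}$. Computing the phase of $W(E_{n}) / \|E_{n}\| \to W(v_{\infty})$ then contradicts this window, yielding the support property.

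The main obstacle is the converse direction: the deformation theorem itself is essentially the content of support $\Rightarrow$ full, but extracting a sharp contradiction in the reverse direction requires careful bookkeeping, in particular controlling how phases of $\sigma$-semistable objects propagate into HN filtrations under the perturbation to $\sigma'$. This is where local finiteness is indispensable — it allows one to quantify how much the slicing can move when $Z$ moves, and thereby to pin the phases of the $E_{n}$ close to $\phi_{n}$ while simultaneously forcing them close to the phase of $W(v_{\infty})$, producing the desired contradiction.
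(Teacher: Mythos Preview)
The paper does not give its own proof of this lemma at all: it is stated as a citation of \cite[Appendix B.4]{MR2852118} and then used as a black box (in Theorem \ref{thm:full}). So there is nothing in the paper to compare your argument against.

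That said, your sketch is essentially the argument of Bayer--Macr\`\i. The forward direction (support $\Rightarrow$ full) is exactly the quantitative deformation result you cite. For the converse, your strategy is the right one, and the obstacle you flag is genuine: the $E_{n}$ need not remain $\sigma'$-semistable, so one must bound the phases of their $\sigma'$-HN factors via the slicing metric $d(\mca P, \mca P')$ rather than speak of ``the phase of $E_{n}$'' directly. Concretely, since each $\sigma'$-HN factor of $E_{n}$ has phase within $d(\mca P,\mca P')$ of $\phi_{n}$, the complex number $W(E_{n})$ lies in the corresponding sector, and hence $\arg W(E_{n})$ is pinned near $\pi\phi_{n}$. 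Choosing the perturbation $u$ so that $\arg u(v_{\infty})$ lies outside any such sector (after passing to a subsequence so that $\phi_{n}$ converges) then yields the contradiction. Your write-up gestures at this but does not quite close it; in a full proof you would want to make the choice of $u$ and the sector bound explicit.
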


\subsection{A fundamental observation}

We give an observation of Problem \ref{problem1} in Proposition \ref{pr:easiest}. 
The following is a key ingredient of the observation. 

\begin{lem}\label{lm:indecomposable}
Let $\mb D$ be an additive category. 
If any non-zero endomorphism of an object $E \in \mb D$ is invertible, then $E$ is indecomposable. 
\end{lem}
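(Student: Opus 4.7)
The plan is to argue by contradiction using the idempotent produced by a nontrivial direct sum decomposition. Suppose, for contradiction, that $E$ admits a direct sum decomposition $E \cong E_{1} \oplus E_{2}$ with both $E_{1}$ and $E_{2}$ nonzero. Recall that in an additive category this provides structural morphisms $\iota_{i} \colon E_{i} \to E$ and $\pi_{i} \colon E \to E_{i}$ with $\pi_{i} \iota_{i} = \mathrm{id}_{E_{i}}$, $\pi_{j} \iota_{i} = 0$ for $i \neq j$, and $\iota_{1} \pi_{1} + \iota_{2} \pi_{2} = \mathrm{id}_{E}$.

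Set $p = \iota_{1} \pi_{1} \in \mathrm{End}_{\mathbf{D}}(E)$. Then $p$ is idempotent, since $p^{2} = \iota_{1} (\pi_{1} \iota_{1}) \pi_{1} = \iota_{1} \pi_{1} = p$. I will then verify that $p$ is neither zero nor the identity. If $p = 0$, then $\mathrm{id}_{E_{1}} = \pi_{1} \iota_{1} = \pi_{1}(\iota_{1} \pi_{1})\iota_{1} = \pi_{1} p \iota_{1} = 0$, forcing $E_{1} = 0$, contrary to assumption. Symmetrically, if $p = \mathrm{id}_{E}$, then $\iota_{2} \pi_{2} = \mathrm{id}_{E} - p = 0$, which by the same argument forces $E_{2} = 0$. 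Hence $p$ is a nonzero, non-identity, idempotent endomorphism of $E$.

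Now invoke the hypothesis: since $p \neq 0$, the endomorphism $p$ is invertible. Applying $p^{-1}$ to the idempotency relation $p^{2} = p$ yields $p = \mathrm{id}_{E}$, contradicting $p \neq \mathrm{id}_{E}$. Therefore no such nontrivial decomposition of $E$ exists, and $E$ is indecomposable.

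There is essentially no obstacle here; the only point worth noting is that the argument uses only the formal structure of direct sums in an additive category (projections, inclusions, and their relations) and does not require any kernels, cokernels, or triangulated structure, which is exactly why the lemma, as strengthened by the referee, applies to arbitrary additive categories.
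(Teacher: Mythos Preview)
Your proof is correct and follows essentially the same approach as the paper: both arguments take the idempotent $e=\iota_{1}\pi_{1}$ associated with a putative nontrivial direct-sum decomposition, use the hypothesis to make it invertible, and conclude from $e^{2}=e$ that $e=\mathrm{id}_{E}$, forcing the other summand to vanish. The only cosmetic difference is that the paper assumes one summand is nonzero and proves the other is zero, whereas you assume both are nonzero and reach a contradiction.
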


\begin{rmk}
In the earlier version of Lemma \ref{lm:indecomposable}, the category $\mb D$ was assumed to be a bounded derived category. 
The generalization to additive categories was given by the referee. 
\end{rmk}

\begin{proof}
Suppose that the object $E$ is $A \+ B$ with $A \neq 0$. 
Then it is enough to show $B=0$. 

Let $p \colon A\+ B \to A$ the projection and $i \colon A \to A\+ B$ the section of $p$. 
Then the composite $e= i \circ p$ is non-zero and invertible since $p\circ i \circ p=p$. 
Moreover we have $e^{2}=e$ which implies $e=\1_{E}$ since $e$ is invertible. 
On the other hand we have $e(B)=0$. 
Since $e=\1_{E}$, $B$ has to be $0$. 
%
%
%
\end{proof}

Lemma \ref{lm:indecomposable} supplies an example of a derived category which has no stability condition.

\begin{prop}\label{pr:empty}
Suppose that a principle integral domain $R$ is not a field. 
Let $\mb D^{b}(\mb{mod}\,  R)$ be the bounded derived category of the abelian category $\mb{mod}\, R$ of finitely generated $R$-modules. 
Then $\Stab{\mb D^{b}(\mb{mod}\, R)}$ is empty. 
\end{prop}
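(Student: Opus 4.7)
The plan is to show that under the hypothesis that some $\sigma \in \Stab{\mb D^{b}(\mb{mod}\, R)}$ exists, no object can be $\sigma$-stable, contradicting the fact that a locally finite stability condition on a nonzero triangulated category must admit stable objects.

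First, I would analyze the Grothendieck group and the indecomposables. Since $R$ is a PID, the structure theorem gives $K_{0}(\mb D^{b}(\mb{mod}\, R)) = K_{0}(\mb{mod}\, R) \cong \bb Z \cdot [R]$, and the resolution $0 \to R \xrightarrow{p^{n}} R \to R/(p^{n}) \to 0$ forces $[R/(p^{n})] = 0$; consequently the central charge $Z$ vanishes on every torsion module. Moreover $\mb{mod}\, R$ is hereditary (global dimension $1$), so every object of $\mb D^{b}(\mb{mod}\, R)$ splits as a direct sum of its shifted cohomologies. Combined with the structure theorem, the indecomposables are precisely the shifts $R[k]$ and $R/(p^{n})[k]$ for $p$ a prime element of $R$ and $n \geq 1$.

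Next, I would classify the candidates for $\sigma$-stable objects. Because $\sigma$ is locally finite, each slice $\mca P(\phi)$ is a finite-length abelian category whose simple objects are exactly the $\sigma$-stables of phase $\phi$, so by Schur's lemma every such $E$ has $\mathrm{End}_{\mb D^{b}}(E)$ a division ring, and by Lemma \ref{lm:indecomposable} $E$ is indecomposable. A direct computation using hereditarity gives $\mathrm{End}(R[k]) = R$, which is not a division ring because $R$ is not a field by hypothesis, and $\mathrm{End}(R/(p^{n})[k]) = R/(p^{n})$, which is a division ring if and only if $n = 1$. Hence the only possible $\sigma$-stable objects are the shifts $R/(p)[k]$ for $p$ prime and $k \in \bb Z$.

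However, the first step shows $Z(R/(p)[k]) = (-1)^{k} Z(R/(p)) = 0$, contradicting the axiom $Z(E) \neq 0$ for any semistable $E$; hence there are no $\sigma$-stable objects at all. On the other hand $R$ is a nonzero object, so the Harder-Narasimhan property produces a nonzero semistable object, whose Jordan-H\"older filtration in the finite-length slice $\mca P(\phi)$ yields at least one $\sigma$-stable object, giving the desired contradiction. The main obstacle is recognizing the pair of constraints at work: stables must be indecomposable with division-ring endomorphism algebra (forcing them to be residue-field shifts $R/(p)[k]$) and must have nonzero central charge (which the collapse of $K_{0}$ excludes for exactly those objects). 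Once this pair of conditions is isolated, the contradiction is immediate, and the hypothesis that $R$ is not a field enters precisely to make $\mathrm{End}(R[k]) = R$ fail the division-ring test.
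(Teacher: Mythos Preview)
Your proof is correct and follows essentially the same route as the paper: both invoke Lemma~\ref{lm:indecomposable} to see that $\sigma$-stable objects are indecomposable, rule out the shifts of $R$ because $\mathrm{End}(R)=R$ is not a division ring, and then exploit the fact that torsion modules have trivial class in $K_0$. The only difference is in the endgame: the paper takes the Harder--Narasimhan filtration of $R$ and reaches a rank contradiction, whereas you observe directly that $Z$ vanishes on every remaining stable candidate; your additional division-ring check on $R/(p^n)$ for $n\geq 2$ is correct but unnecessary, since $Z$ already kills all torsion.
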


\begin{proof}
Suppose to the contrary that there exists a locally finite stability condition $\sigma$ on $\mb D^{b}(\mb{mod}\,  R)$. 
We can take a $\sigma$-stable object $E \in \mb D^{b}(\mb{mod}\,  R)$ since $\sigma $ is locally finite. 
Then any non zero endomorphism of $E$ is invertible and Lemma \ref{lm:indecomposable} implies $E$ is indecomposable. 
Since the global dimension of $R$ is $1$, any indecomposable object in $\mb D^{b}(\mb{mod}\,  R)$ is a shifts of an indecomposable object in $\mb{mod}\, R$. 
Moreover any indecomposable object in $\mb{mod }\, R$ is $R$ or $R/(r)$ for some non zero element $r \in R$ by the assumption for $R$. 

Without loss of generality, we assume $E=R$ or $E=R/(r)$ where $r \in R\setminus\{0\}$. 
Recall that $R$ itself has a non-invertible endomorphism as $R$-modules by the assumption. 
Thus $R$ never be $\sigma$-stable since any non-zero endomorphism of a $\sigma$-stable object is invertible. 

Then any $\sigma$-stable object is isomorphic to a torsion $R$-module up to shifts. 
The existence of the Harder-Narasimhan filtrations implies that the module $R$ itself is given by a successive extension of shifts of torsion $R$-modules. 
This clearly gives a contradiction since the rank of $R$ is positive but the rank of torsion modules is zero. 
\end{proof}

\begin{cor}
Let $\{R_{i}\}_{i=1}^{n}$ be the finite set of principle ideal domains which are not fields and put $R= \prod_{i=1}^{n}R_{i}$. 
Then $\Stab{\mb D^{b}(\mb{mod}\, R)}$ is empty. 
\end{cor}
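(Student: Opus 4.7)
The strategy is to reduce to Proposition \ref{pr:empty} via the orthogonal decomposition induced by the product structure. First I would observe that $R = \prod_{i=1}^{n} R_{i}$ carries a complete system of orthogonal central idempotents $\{e_{i}\}$, so every finitely generated $R$-module $M$ splits canonically as $M = \bigoplus_{i} e_{i} M$ with each $e_{i}M$ a finitely generated $R_{i}$-module. This yields an equivalence of abelian categories $\mb{mod}\, R \simeq \prod_{i=1}^{n} \mb{mod}\, R_{i}$, hence an equivalence of triangulated categories
\[
\mb D^{b}(\mb{mod}\, R) \simeq \prod_{i=1}^{n} \mb D^{b}(\mb{mod}\, R_{i}).
\]
The decomposition is orthogonal: for $X_{i} \in \mb D^{b}(\mb{mod}\, R_{i})$ and $X_{j} \in \mb D^{b}(\mb{mod}\, R_{j})$ with $i \ne j$, the idempotent $e_{i}$ acts as $\1$ on $X_{i}$ and as $0$ on $X_{j}$, forcing $\Hom(X_{i}, X_{j}) = 0$.

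The core of the argument is to show that any locally finite stability condition $\sigma = (Z, \mca P)$ on $\mb D := \mb D^{b}(\mb{mod}\, R)$ restricts to a locally finite stability condition $\sigma_{i}$ on each summand $\mb D_{i} := \mb D^{b}(\mb{mod}\, R_{i})$. Since $K_{0}(\mb D) = \bigoplus_{i} K_{0}(\mb D_{i})$, the central charge $Z_{i} := Z|_{K_{0}(\mb D_{i})}$ is well-defined, and I would set $\mca P_{i}(\phi) := \mca P(\phi) \cap \mb D_{i}$. For $X \in \mb D_{i}$, take the HN filtration of $X$ in $\mb D$ with $\sigma$-semistable factors $\{A_{k}\}$ of strictly decreasing phases, and apply the exact projection $\pi_{j} \colon \mb D \to \mb D_{j}$ for $j \ne i$. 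Orthogonality implies $\pi_{j}(A_{k})$ remains semistable of the same phase, and the projected filtration is then an HN filtration of $\pi_{j}(X) = 0$; uniqueness of HN filtrations forces $\pi_{j}(A_{k}) = 0$ for every $k$, so each $A_{k}$ lies in $\mb D_{i}$ and is $\sigma_{i}$-semistable. Local finiteness of $\sigma_{i}$ then follows because $\mca P_{i}((\phi - \epsilon, \phi + \epsilon))$ embeds as an extension-closed full subcategory of the finite length quasi-abelian category $\mca P((\phi - \epsilon, \phi + \epsilon))$, with simple objects inherited from the ambient category by the same projection argument.

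If $\Stab{\mb D}$ were non-empty, the restriction above would produce an element of $\Stab{\mb D_{1}}$; but $R_{1}$ is a principal ideal domain which is not a field, so Proposition \ref{pr:empty} gives $\Stab{\mb D_{1}} = \emptyset$, a contradiction. The main obstacle I anticipate is the rigorous verification of the restriction step, chiefly the closedness of $\mb D_{i}$ under HN factors of its own objects and the preservation of local finiteness; both are formal consequences of orthogonality, after which the result becomes a clean bookkeeping reduction to the single-factor case already handled in Proposition \ref{pr:empty}.
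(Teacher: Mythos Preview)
Your proposal is correct and follows essentially the same strategy as the paper: establish the orthogonal decomposition $\mb D^{b}(\mb{mod}\, R) \simeq \bigoplus_{i} \mb D^{b}(\mb{mod}\, R_{i})$, restrict a hypothetical stability condition to a summand, and invoke Proposition~\ref{pr:empty}. The only difference is that the paper obtains the restriction step by citing \cite[Proposition 5.2]{MR3984103}, which gives the stronger statement $\Stab{\mb D} \cong \prod_{i}\Stab{\mb D_{i}}$, whereas you supply a direct self-contained argument (direct summands of semistables are semistable, so HN factors of objects in $\mb D_{i}$ stay in $\mb D_{i}$); your version is slightly more elementary and avoids the external reference, while the paper's citation yields a sharper conclusion than is actually needed here.
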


\begin{proof}
From the assumption, the derived category $\mb D^{b}(\mb{mod}\, R)$ has the orthogonal decomposition: 
 $\mb D^{b}(\mb{mod}\, R) =\bigoplus_{i=1}^{n} \mb D^{b}(\mb{mod}\, R_{i})$. 
 By \cite[Proposition 5.2]{MR3984103}\footnote{Thought the article \cite{MR3984103} has been written with assuming a triangulated category is linear over algebraically closed field, one can prove \cite[Proposition 5.2]{MR3984103} without the assumption. Hence we can apply the proposition. }, we see $\Stab{\mb D^{b}(\mb{mod}\, R)}$ is isomorphic to the product $\prod_{i=1}^{n}\Stab{\mb D^{b}(\mb{mod}\, R_{i})}$. 
Thus the category 
$\mb D^{b}(\mb{mod}\, R)$ has no locally finite stability condition. 
\end{proof}

Now we go back to an observation for Problem \ref{problem1}.

\begin{prop}\label{pr:easiest}
Let $\ms C$ be the infinity category $\ms D_{\mr{coh}}^{b}(\Spec \mb k)$ 
of the scheme $\Spec \mb k$ where $\mb k$ is a field (cf. Definition \ref{dfn:derivedmugen}). 
Then both $\Stab  {\ho {\ms C}}$ and $\Stab {\ho {\ms C^{\Delta^1}}}$ are contractible. 
In particular the answer for Problem \ref{problem1} is affirmative. 
\end{prop}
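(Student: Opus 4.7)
The strategy is to compute each of the two stability spaces explicitly; contractibility of both then gives the affirmative answer to Problem \ref{problem1}, since any two contractible spaces are homotopy equivalent.

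First I would handle $\Stab{\ho{\ms C}}$. By the remark following Definition \ref{dfn:derivedmugen}, $\ho{\ms C}$ is equivalent to $\mb D^{b}(\mb{mod}\,\mb k)$. Every object of this category is a finite direct sum of shifts of the one-dimensional module $\mb k$, so up to shift the unique indecomposable is $\mb k$ itself. Lemma \ref{lm:indecomposable} forces every $\sigma$-stable object to be indecomposable, and hence isomorphic to some $\mb k[n]$. A stability condition is therefore specified by the complex number $Z(\mb k) \in \bb C \setminus \{0\}$ together with a real lift of its phase $\phi(\mb k) \in \bb R$, and any such datum extends uniquely to a stability condition via the hearts $\mb{mod}\,\mb k[n]$. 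This pair identifies $\Stab{\ho{\ms C}}$ with $\bb C$ (Corollary \ref{cor:speck}), which is contractible.

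Next I would handle $\Stab{\ho{\ms C^{\Delta^{1}}}}$. By Corollary \ref{pr:MMR} the category $\ho{\ms C^{\Delta^{1}}}$ is equivalent to $\mb D^{b}(\mr{Mor}(\mb{mod}\,\mb k))$, which in turn is the bounded derived category of the $A_{2}$-quiver. Its indecomposable objects up to shift are the three objects $A = j_{!}\mb k = [\mb k \to 0]$, $B = j_{*}\mb k = [0 \to \mb k]$, and $P = s\mb k = [\mb k \overset{\1}{\to} \mb k]$, linked by the distinguished triangle $B \to P \to A$ obtained as the pointwise fiber sequence of the projection $P \to A$ in $\ms C^{\Delta^{1}}$. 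By Lemma \ref{lm:indecomposable} every $\sigma$-stable object is a shift of one of these three. A stability condition whose standard heart has simples $A$ and $B$ is then parametrized by the pair of central charges $(Z(A),Z(B))$ in the upper half plane, together with real lifts of their phases, yielding a chart biholomorphic to $\bb C^{2}$; tilting at one of the three indecomposables extends this chart across the corresponding wall. A direct computation for the $A_{2}$-quiver shows that the resulting atlas covers all of $\Stab{\ho{\ms C^{\Delta^{1}}}}$ and that the glued complex manifold is biholomorphic to $\bb C^{2}$, which is contractible.

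\textbf{Main obstacle.} Step 1 is essentially formal once Corollary \ref{cor:speck} is in hand. The substantive content lies in the second step, in verifying that the tilt atlas built from the three indecomposables exhausts $\Stab{\ho{\ms C^{\Delta^{1}}}}$ rather than only covering a single connected component. The key input is again Lemma \ref{lm:indecomposable}: every heart admits at most three simples, and a finite case analysis shows any such heart is obtained from the standard one by iterated tilting at stable simples. Once the atlas is pinned down, checking that the transition maps glue to give $\bb C^{2}$ is a routine computation, and contractibility follows.
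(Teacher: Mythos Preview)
Your Step~1 matches the paper's argument essentially verbatim. For Step~2 the paper takes a different and shorter route: rather than build a tilt atlas and identify the global space with $\bb C^{2}$, it invokes Macr\`{\i}'s description \cite{MR2335991} of $\Stab{\ho{\ms C^{\Delta^{1}}}}$ as admitting an open cover $\mca U=\{\Theta_{i}\}_{i=0}^{2}$ by three contractible sets whose pairwise (and triple) intersections all coincide with a single $\widetilde{\mr{GL}}_{2}^{+}(\bb R)$-orbit $O_{-1}$, hence are contractible. The nerve theorem \cite{MR0050280} then gives a homotopy equivalence $\Stab{\ho{\ms C^{\Delta^{1}}}}\simeq N(\mca U)=\Delta^{2}$, which is contractible. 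Your approach is correct---the space really is biholomorphic to $\bb C^{2}$---and yields a stronger conclusion, but the ``direct computation'' of the gluing you allude to is substantially more work than the paper's shortcut, and the main obstacle you flag (exhaustiveness of the tilt atlas) is exactly what Macr\`{\i}'s cover already packages. Since the proposition only asks for contractibility, not the complex structure, the nerve-theorem argument is the more economical choice.
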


\begin{proof}
Since any nonzero complex number gives an orientation preserving $\bb R$-liner isomorphism on $\bb R^{2}$, the multiplicative group $\bb C^{*}=\bb C\setminus \{0\}$ is a subgroup of $\mr{GL}_{2}^{+}(\bb R)$. 
The universal cover $\bb C$ of $\bb C^{*}$ is a subgroup of $\widetilde{\mr{GL}}_{2}^{+}(\bb R)$. 
Thus $\bb C$ acts on $\Stab{\mb  D}$ for any triangulated category $\mb D$. 
The right action of $\bb C$ on $\Stab{\mb D}$ is explicitly given by
\begin{equation}\label{C-action}
\sigma \cdot z = ( W, \mca Q), 
W(E)= \exp(-z) Z(E) \mbox{ and }\mca Q(\phi) = \mca P(\phi +y /\pi), 
\end{equation}
where $\sigma=(Z, \mca P)$ and $z=x+\sqrt{-1}y$. 
In particular the $\bb C$ action on $\Stab{\mb D}$ is holomorphic. 

We show that $\Stab{\ho {\ms C}}$ is isomorphic to $\bb C$ as complex manifolds. 
It is enough to show that the $\bb C$ action on $\Stab{\ho {\ms C}}$ is free and transitive. 
Let $E$ be a stable object for a stability condition $\sigma \in \Stab {\ho {\ms C}}$. 
Since $E$ is indecomposable by Lemma \ref{lm:indecomposable}, 
$E$ should be isomorphic to $\mb k$ up to shifts. 
Hence $\mb k$ is stable for all $\sigma \in \Stab {\ho{ \ms C}}$ and the $\bb C$ action on $\Stab{\ho {\ms C}}$ is free by the $\bb C$ action (\ref{C-action}). 

Take stability conditions $\sigma_{1}$ and $\sigma _{2}$. 
Let $\phi_{i}$ is the phase of $\mb k$ in $\sigma_{i}$ and $m_{i}$ be the mass of $\mb k$ in $\sigma_{i}$ ($i \in \{1,2\}$). 
Put $z = \log m_{1}-\log m_{2} + \sqrt{-1}\pi(\phi_{1}-\phi_{2})$. 
Then we see $\sigma_{1} \cdot z= \sigma _{2}$, since $\mb k$ is stable for any $\sigma \in \Stab{\ho{\ms C}}$. 
Thus the $\bb C$-action is transitive.

By Corollary \ref{pr:MMR}, $\ho{\ms C^{\Delta^1}}$ is the bounded derived category $\mb D^{b}\left(\mb{mod}(\bullet  \to 	\bullet)\right)$ of the finite dimensional representations of the $A_2$ quiver $\bullet \to \bullet$.  
The argument due to Macr\`{i} \cite{MR2335991} essentially implies that $\Stab {\ho {\ms C^{\Delta^1}}}$ is contractible as follows. 

According to \cite{MR2335991}, $\Stab {\ho {\ms C^{\Delta^1}}}$ has an open covering $\mca U = \{ \Theta_i  \}_{i=0}^2$ such that 
each $\Theta _i$ is contractible and any two intersections are the same $\widetilde{\mr{GL}}_2^+ (\bb R)$-orbit $O_{-1}$ of a stability condition:$\Theta_i \cap \Theta _j = O_{-1}$. 
Then any finite intersection is contractible since $O_{-1}$ is homeomorphic to $\widetilde{\mr{GL}}_2^+ (\bb R)$. 
The nerve theorem \cite{MR0050280} (or \cite[Corollary 4G.3]{MR1867354}) implies that $\Stab {\ms C^{\Delta^1}}$ is homotopy equivalent to $N(\mca U)$ where $N(\mca U)$ is the nerve of the covering $\mca U$. 
Clearly $N(\mca U)$ is the standard simplicial complex $\Delta^2$ (here $\Delta^2$ is not a simplicial set but a simplicial complex). 
Hence $\Stab {\ho {\ms C^{\Delta^1}}}$ is contractible. 
\end{proof}

Through the proof above, we have proved the following: 

\begin{cor}\label{cor:speck}
Let $\mb D^{b}(\mb{mod}\, \mb k)$ be the bounded derived category of finite dimensional $\mb k$-vector spaces where $\mb k$ is a field. 
Then $\Stab{\mb D^{b}(\mb{mod}\, \mb k)}$ is isomorphic to $\bb C$. 
\end{cor}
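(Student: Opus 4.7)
The plan is to extract and repackage the argument already given in the proof of Proposition \ref{pr:easiest}, since Corollary \ref{cor:speck} is essentially the geometric content of that proof stripped of the infinity-categorical wrapping. The key observation is that the homotopy category $\ho{\ms D^b_{\mr{coh}}(\Spec \mb k)}$ is equivalent to $\mb D^b(\mb{mod}\, \mb k)$, so it suffices to show that the natural right action of $\bb C \subset \widetilde{\mr{GL}}^+_2(\bb R)$ on $\Stab{\mb D^b(\mb{mod}\, \mb k)}$, given by formula (\ref{C-action}), is free and transitive; together with the fact that $\Stab{\mb D^b(\mb{mod}\, \mb k)}$ is nonempty (witnessed, e.g., by the standard heart $\mb{mod}\, \mb k$ with $Z(\mb k)=-1$), this yields an isomorphism with $\bb C$ as complex manifolds.

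First I would establish freeness. Let $\sigma = (Z,\mca P) \in \Stab{\mb D^b(\mb{mod}\, \mb k)}$ and pick any $\sigma$-stable object $E$ (which exists because $\sigma$ is locally finite). Every non-zero endomorphism of a stable object is invertible, so by Lemma \ref{lm:indecomposable} the object $E$ is indecomposable. Since $\mb k$ has global dimension $0$, every indecomposable object of $\mb D^b(\mb{mod}\, \mb k)$ is isomorphic to a shift of $\mb k$. Consequently $\mb k$ itself is $\sigma$-stable for every $\sigma$. A direct inspection of (\ref{C-action}) shows that the only $z \in \bb C$ fixing both the phase and the mass of $\mb k$ is $z=0$, so the action is free.

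Next I would prove transitivity. Given two stability conditions $\sigma_1,\sigma_2$, let $\phi_i$ and $m_i$ denote the phase and mass of $\mb k$ with respect to $\sigma_i$, and set
\[
z = \log m_1 - \log m_2 + \sqrt{-1}\,\pi(\phi_1-\phi_2).
\]
The formula (\ref{C-action}) shows $\sigma_1\cdot z$ and $\sigma_2$ assign the same central charge and phase to $\mb k$. Because every $\sigma_i$-semistable object is a direct sum of shifts of $\mb k$, this agreement on $\mb k$ forces $\sigma_1\cdot z = \sigma_2$.

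Finally I would assemble the conclusion: $\Stab{\mb D^b(\mb{mod}\, \mb k)}$ is a single free $\bb C$-orbit, the action of $\bb C$ is holomorphic (it factors through the holomorphic $\widetilde{\mr{GL}}^+_2(\bb R)$-action), so the orbit map $\bb C \to \Stab{\mb D^b(\mb{mod}\, \mb k)}$ through any basepoint is a biholomorphism. The main potential obstacle is verifying that agreement of the central charge and phase on the generator $\mb k$ really implies equality of the full stability data (the slicings); this is handled by observing that every object of $\mb D^b(\mb{mod}\, \mb k)$ is a finite direct sum of shifts of $\mb k$, so the Harder-Narasimhan filtrations are determined automatically.
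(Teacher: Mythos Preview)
Your proposal is correct and follows essentially the same approach as the paper: the corollary is stated immediately after Proposition \ref{pr:easiest} with the remark ``Through the proof above, we have proved the following,'' and your write-up faithfully extracts that argument (free and transitive $\bb C$-action via Lemma \ref{lm:indecomposable} and the explicit formula (\ref{C-action})). One small wording point: the $\widetilde{\mr{GL}}^+_2(\bb R)$-action is not itself holomorphic (it is a real Lie group), so the holomorphicity of the $\bb C$-action should be read off directly from (\ref{C-action}) as the paper does, rather than by factoring through $\widetilde{\mr{GL}}^+_2(\bb R)$.
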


\begin{rmk}
It is difficult to generalize Proposition \ref{pr:easiest} to arbitrary case since we only calculate the homotopy types of 
$\Stab {\ho {\ms C}}$ and $\Stab {\ho {\ms C^{\Delta^1}}}$ independently. 
Thus our basic motivation of this note is a construction of continuous maps between $\Stab {\ho {\ms C}}$ and $\Stab {\ho {\ms C^{\Delta^1}}}$ for (arbitrary) stable infinity categories. 
\end{rmk}

\section{Stability conditions on morphisms}\label{sec:4}

The aim of this section is the construction of continuous maps form $\Stab {\ho {\ms C}}$ to $\Stab {\ho {\ms C^{\Delta^1}}}$ and we wish to study some properties of the morphisms. 
A key ingredient of  the construction is the ``gluing construction" developed by \cite{MR2721656}.

\subsection{Gluing construction of stability conditions}

As observed in \S \ref{sc:SOD}, $\ho {\ms C^{\Delta^1}}$ has semiorthogonal decompositions. 
In particular each component is equivalent to $\ho {\ms C}$ itself.  
Collins--Polishchuck \cite{MR2721656} proposed a construction of stability conditions on a triangulated category $\mb D$ coming from a semiorthogonal decomposition $\mb D= \<\mb D_1, \mb D_2  \>$. 
A key ingredient of the construction is a \textit{reasonable} stability condition on $\ho {\ms C}$:

\begin{dfn}[{\cite[pp. 568]{MR2721656}}]\label{dfn-reasonable}
A stability condition $\sigma =(\mca A, Z)$ on a triangulated category $\mb D$ is 
\textit{reasonable} if $\sigma$ satisfies
\[
0 < \inf	\{	|Z(E)| \in \bb R  \mid 	E\mbox{ is semistable in }\sigma  \}. 
\]
We also denote by $\Stabr {\mb D}$ the set of reasonable stability conditions on $\mb D$. 
\end{dfn}

\begin{rmk}
Since a reasonable stability condition is locally finite by \cite[Lemma 1.1]{MR2721656}, $\Stabr{\mb D}$ is a subset of $\Stab{\mb D}$. 
Unfortunately we do not know whether $\Stabr {\mb D} = \Stab {\mb D}$ or not. 

If a stability condition $\sigma =(Z,\mca P)$ satisfies the support property then $|Z(E)|$ has lower bound since a norm on $K_0(\mb D)$ is discrete. 
Hence $\Stabr {\mb D}$ contains any full components of $\Stab {\mb D}$ by Lemma \ref{BMb.4}. 
Thus a reasonable stability condition is sufficiently ``reasonable". 
\end{rmk}

\begin{prop}[{\cite{MR2721656}}]\label{CP2.2}
Suppose that the triangulated category $\mb D$ has a semiorthogonal decomposition $\<\mb D_1, \mb D_2  \>$. 
The left adjoint of the inclusion $\mb D_1 \to \mb D$ is denoted by $\tau_1$ and the right adjoint of the inclusion $\mb D_2 \to \mb D$ is denoted by $\tau_2$.
Let $\sigma_i = (Z_i, \mca P_i)$ be a reasonable stability condition on $\mb D_i$ for $i \in \{1,2\}$.  
Suppose that $\sigma_1$ and $\sigma_2$ satisfy the following conditions
 \begin{enumerate}
\item  $\Hom_{\mb D}^{\leq 0}\left (\mca P_1(0,1], \mca P _2(0,1]\right) =0$ \label{condition-a} and 
\item There is a real number $a\in (0,1)$ such that $\Hom_{\mb D}^{\leq 0} \left(\mca P_1(a,a+1], \mca P _2(a,a+1]\right) =0$. \label{condition-b}
\end{enumerate}

Then there exists a unique reasonable stability condition $\gl{\sigma_1} {\sigma_2}$ on $\mb D$ glued from $\sigma _1$ and $\sigma _2$ whose heart $\mca A$ of the $t$-structure of $\gl{\sigma _1} {\sigma _2}$ is given by 
\[
\mca A= \{ E \in \mb D \mid \tau_i(E) \in \mca P_i\left((0,1]\right)  \, (i=1,2)\}
\]
and whose central charge $Z$ is given by $Z(E) = Z_1(\tau_1(E)) + Z_2(\tau_2(E))$. 
Moreover the gluing construction is continuous on pairs $\pair {\sigma_1}{\sigma_2}$ satisfying the above conditions (\ref{condition-a}) and (\ref{condition-b}). 
\end{prop}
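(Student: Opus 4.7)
The plan is to verify the four requirements in turn: construct the heart of a bounded $t$-structure on $\mb D$, check that the prescribed $Z$ is a stability function on it, establish Harder--Narasimhan filtrations, and finally conclude reasonableness and continuity.

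First I would glue the bounded $t$-structures on $\mb D_i$ arising from the slicings $\mca P_i$ via the Beilinson--Bernstein--Deligne recipe adapted to a semiorthogonal decomposition: declare an object $E$ to lie in $\mb D^{\leq 0}$ if $\tau_i(E) \in \mca P_i(0,\infty)$ for $i = 1,2$ and in $\mb D^{\geq 1}$ if $\tau_i(E) \in \mca P_i(-\infty,0]$ for $i = 1,2$.  The nontrivial axiom is the existence of truncation triangles, which I would produce from the truncations of $\tau_1 E$ and $\tau_2 E$ inside each $\mb D_i$ together with the semiorthogonal triangle $\tau_2 E \to E \to \tau_1 E$; closing the resulting octahedral diagram into a truncation triangle of the required shape uses precisely the Hom-vanishing in hypothesis (\ref{condition-a}).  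The heart of this bounded $t$-structure is then the prescribed $\mca A$.

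Next I would verify that $Z(E) = Z_1(\tau_1 E) + Z_2(\tau_2 E)$ is a stability function on $\mca A$: for nonzero $E \in \mca A$ at least one $\tau_i E$ is nonzero, both summands lie in the closed upper half plane with the positive real axis removed, and hence the sum lies in the strict upper half plane or on the strict negative real axis.  For the Harder--Narasimhan property, I would refine the semiorthogonal triangle $\tau_2 E \to E \to \tau_1 E$ using the HN filtrations of $\tau_i E$ with respect to $\sigma_i$, obtaining a finite filtration of $E$ whose graded pieces are $\sigma_1$- or $\sigma_2$-semistable.  Adjacent ``misordered'' factors $A_1 \in \mca P_1(0,1]$ and $A_2 \in \mca P_2(0,1]$ can then be swapped using hypothesis (\ref{condition-a}) to bring the filtration into genuine HN order with respect to $Z$; finiteness of this rearrangement process verifies Bridgeland's HN criterion.

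Reasonableness of the glued condition follows because the infima of $|Z_i|$ over $\sigma_i$-semistables are strictly positive and the mass of a glued semistable is controlled from below by a combination of these.  Uniqueness is immediate from the explicit description of $\mca A$ and $Z$.  For continuity of $\pair{\sigma_1}{\sigma_2} \mapsto \gl{\sigma_1}{\sigma_2}$ the central charge depends continuously on $(Z_1, Z_2)$ by construction, and the slicing is cut out by inequalities on the phases of the $\tau_i$-projections that deform continuously; hypothesis (\ref{condition-b}) is what guarantees the gluing recipe also applies at the rotated heart $\mca P_i(a, a+1]$, allowing continuity on a single heart to propagate to the full slicing.  The main difficulty is the HN step: the raw filtration obtained by refining the semiorthogonal triangle has factors whose glued phases need not be monotone, and one must verify that hypothesis (\ref{condition-a}) permits every ``misordered'' pair $(A_1, A_2)$ to be swapped without cancellation, so that a genuine HN filtration emerges after finitely many rearrangements.
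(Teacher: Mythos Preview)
The paper does not give its own proof of this proposition: it is quoted verbatim as a result of Collins--Polishchuk \cite{MR2721656}, with no argument supplied beyond the citation. So there is nothing in the paper to compare your proposal against; what you have written is effectively a sketch of the original Collins--Polishchuk proof.

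As such a sketch, your outline is broadly on target---gluing the $t$-structures via the semiorthogonal triangle and hypothesis~(\ref{condition-a}), checking $Z$ is a stability function, and building HN filtrations by refining the semiorthogonal triangle with the HN filtrations on each side---but a few steps are thinner than they should be. For reasonableness, you need a concrete reason why $|Z(E)|$ is bounded below for glued-semistable $E$: the point is that such an $E$ of phase $\phi$ admits a filtration whose factors lie in $\mca P_1(\phi)$ or $\mca P_2(\phi)$, so $|Z(E)|$ is a \emph{sum} (not a difference) of the $|Z_i|$ of those factors, hence bounded below by $\min(\inf|Z_1|,\inf|Z_2|)$. Your ``swapping'' argument for HN filtrations is correct in spirit, but you should say explicitly that $\sigma_i$-semistable objects are automatically glued-semistable (this is the content of \cite[Proposition~2.2]{MR2721656}, restated in the paper as Lemma~\ref{lm:CP}), which is what makes the rearranged filtration an honest HN filtration. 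Finally, your continuity paragraph is too impressionistic: in \cite{MR2721656} this is a separate theorem requiring a genuine estimate comparing the glued slicing at $(\sigma_1,\sigma_2)$ with that at a nearby pair, and hypothesis~(\ref{condition-b}) enters precisely to ensure the gluing hypotheses are stable under small rotations of the hearts.
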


\begin{lem}\label{lm:CP}
Notation is the same as in Proposition \ref{CP2.2}. 
Suppose that $\gl{\sigma _1}{ \sigma _2} =(Z, \mca P) $ is the glueing stability condition. Then we have 
\[
\mca P(\phi ) \cap \mb D_i = \mca P_i(\phi)
\]
\end{lem}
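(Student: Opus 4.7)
The equality splits into two containments. The easier one, $\mca P_i(\phi) \subseteq \mca P(\phi) \cap \mb D_i$, will use the semiorthogonal decomposition to show that a subobject in the glued heart cannot acquire a nonzero $\mb D_{3-i}$-component; the reverse containment will then follow by uniqueness of Harder--Narasimhan filtrations. Throughout I may assume $\phi \in (0,1]$ by shifting, so that $\mca P_i(\phi) \subseteq \mca A_i$ and the discussion takes place inside the glued heart $\mca A$.

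For the first containment, take $E \in \mca P_i(\phi)$. Since $\tau_i(E) = E$ and $\tau_{3-i}(E) = 0$ by semiorthogonality, we have $E \in \mca A$ and $Z(E) = Z_i(E)$ has phase $\phi$. To verify $\sigma$-semistability, let $0 \to F \to E \to G \to 0$ be any short exact sequence in $\mca A$. Applying the triangulated functor $\tau_{3-i}$ to the associated triangle produces
\[
\tau_{3-i} F \to 0 \to \tau_{3-i} G \to \tau_{3-i} F[1],
\]
so $\tau_{3-i} G \cong \tau_{3-i} F[1]$. Because $F, G \in \mca A$ forces $\tau_{3-i} F, \tau_{3-i} G \in \mca P_{3-i}((0,1])$, while $\tau_{3-i} F[1]$ sits in $\mca P_{3-i}((1,2])$, the only possibility is $\tau_{3-i} F = 0$. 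Hence $F \in \mb D_i \cap \mca A = \mca A_i$, and the monomorphism $F \hookrightarrow E$ restricts to a monomorphism in $\mca A_i$. The $\sigma_i$-semistability of $E$ now gives $\arg Z_i(F) \le \pi\phi$, and since $Z(F) = Z_i(F)$ this is exactly the phase bound required for $\sigma$-semistability of $E$.

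For the reverse containment, take $E \in \mca P(\phi) \cap \mb D_i$ and form its HN filtration in $\sigma_i$:
\[
0 = E_0 \to E_1 \to \cdots \to E_n = E,
\]
with factors $A_j \in \mca P_i(\phi_j)$ satisfying $\phi_1 > \cdots > \phi_n$. The first containment shows each $A_j \in \mca P(\phi_j)$, so the same tower is a HN filtration of $E$ in $\sigma$. But $E \in \mca P(\phi)$ is already $\sigma$-semistable, so uniqueness of HN filtrations forces $n = 1$ and $\phi_1 = \phi$; hence $E = A_1 \in \mca P_i(\phi)$.

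The main obstacle is the vanishing $\tau_{3-i} F = 0$ in the first step. Without it, the extra contribution $Z_{3-i}(\tau_{3-i} F)$ to $Z(F) = Z_i(\tau_i F) + Z_{3-i}(\tau_{3-i} F)$ could push the phase of $F$ above $\phi$ and invalidate semistability. The key is the interplay between the heart condition, which pins $\tau_{3-i} F$ and $\tau_{3-i} G$ inside $\mca P_{3-i}((0,1])$, and the rigidifying fact that $\tau_{3-i}(E) = 0$ for $E \in \mb D_i$; together these constraints force $\tau_{3-i} F$ to vanish, so that the destabilization test reduces entirely to the piece $\sigma_i$ where we can actually apply semistability of $E$.
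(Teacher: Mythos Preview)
Your proof is correct. Both containments are handled soundly: the vanishing argument $\tau_{3-i}F=0$ via the shifted heart is clean, and the reverse containment via uniqueness of Harder--Narasimhan filtrations is essentially the paper's own argument in a slightly different packaging.

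The one genuine difference is in the first containment $\mca P_i(\phi)\subset\mca P(\phi)$. The paper simply cites \cite[Proposition~2.2]{MR2721656}, which already records that $\sigma_i$-semistable objects are $\mr{gl}(\sigma_1,\sigma_2)$-semistable of the same phase; you instead prove this directly by showing that any $\mca A$-subobject $F$ of $E\in\mb D_i$ must satisfy $\tau_{3-i}F=0$ and hence lie in $\mca A_i$. Your route is more self-contained and exposes exactly why the glued semistability test reduces to $\sigma_i$, whereas the paper's route is shorter but opaque unless one consults the reference. For the reverse containment the two arguments are equivalent: the paper takes the maximal $\sigma_i$-destabilizer $A$ and derives $\Hom(A,E)=0$ from the phase gap in $\sigma$, while you phrase the same observation as ``the $\sigma_i$-HN filtration is already a $\sigma$-HN filtration, so it must have length one.''
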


\begin{proof}
According to \cite[Proposition 2,2]{MR2721656}, we have $\mca P_i(\phi) \subset \mca P(\phi)$. 
Thus we see $\mca P_i(\phi) \subset \mca P(\phi) \cap \mb D_i$. 

Conversely let $E \in \mca P(\phi) \cap \mb D_i$. 
Suppose to the contrary that $E$ is not $\sigma_i$-semistable. 
Take the first term $A$ of the Harder-Narasimhan filtration of $E$ with respect to $\sigma_{i}$. 
Then $A$ is $\sigma_{i}$-semistable with $\arg Z_i(A) > \phi $. 
Since $A$ is also $\gl {\sigma _1} {\sigma _2}$-semistable in phase $\arg Z_i(A)$ by \cite[Proposition 2.2]{MR2721656}, there is no morphism from $A$ to $E$. 
Hence $E$ should be $\sigma_i$-semistable and this give the proof of the opposite inclusion $\mca P_i (\phi ) \supset \mca P(\phi) \cap \mb D_i$. 
\end{proof}

\begin{prop}\label{pr:pullback}
Let $\ms C$ be a stable infinity category with $\rank K_{0}(\ho{\ms C}) < \infty$. 
Then there exists continuous maps as follows: 
\[
d_0^* , d_1^*  \colon \xymatrix{	\Stabr {\ho {\ms C}} \ar@<0.5ex>[r]	\ar@<-0.5ex>[r]	&	\Stabr {\ho {\ms C^{\Delta^1}}}}
\]
\begin{enumerate}
\item 
Let $\ho{\ms C^{\Delta^{1}}}=\< \mb D_{0}^{L}, \mb D_{0}^{R}\>$ be the semiorthogonal decomposition in Lemma \ref{lm:SOD}.  
We define the pull back of $\sigma $ along the functor $d_0$ by
\[
d_0^* \sigma := \gl{\sigma _L} {\sigma _R}= \gl{\sigma} {[-1]\sigma}. 
\]
\item Let $\ho{\ms C^{\Delta^{1}}}=\< \mb D_{1}^{L}, \mb D_{1}^{R}\>$ be the semiorthogonal decomposition in Lemma \ref{lm:SOD}. 
We define the pull back of $\sigma $ along the functor $d_1$ by
\[
d_1^* \sigma := \gl{\sigma _L} {\sigma _R} = \gl {[1]\sigma} {\sigma } . 
\]
\end{enumerate}
\end{prop}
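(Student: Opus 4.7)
The plan is to apply the gluing Proposition \ref{CP2.2} to the semiorthogonal decompositions from Lemma \ref{lm:SOD}, then observe that the resulting assignments are continuous. Concretely, for $d_{0}^{*}\sigma$ I use $\ho{\ms C^{\Delta^{1}}}=\<\mb C_{s},\mb C_{/0}\>$ with $\sigma_{L}=\sigma$ on $\mb C_{s}\cong \ho{\ms C}$ (identified via $s$) and $\sigma_{R}=[-1]\sigma$ on $\mb C_{/0}\cong\ho{\ms C}$ (identified via $j_{!}$). Symmetrically, for $d_{1}^{*}\sigma$ I use $\ho{\ms C^{\Delta^{1}}}=\<\mb C_{0/},\mb C_{s}\>$ with $\sigma_{L}=[1]\sigma$ and $\sigma_{R}=\sigma$. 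The shifts by $[\pm 1]$ are chosen so the phase intervals on the two components separate correctly for the Hom-vanishing conditions; note that $[n]\sigma$ is reasonable whenever $\sigma$ is, since the modulus of the central charge is unchanged by shifts.

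The key step is the verification of conditions (\ref{condition-a}) and (\ref{condition-b}) of Proposition \ref{CP2.2}. Writing $\mca P$ for the slicing of $\sigma$, the adjunction $s\dashv d_{1}$ together with $d_{1}\circ j_{!}=\1$ gives, for any $z,x\in\ho{\ms C}$ and any $n\in\bb Z$,
\[
\Hom^{n}_{\ho{\ms C^{\Delta^{1}}}}(s(z),j_{!}(x)) \cong \Hom_{\ho{\ms C}}(z,x[n]).
\]
The heart of $[-1]\sigma$ is $\mca P(-1,0]$, so condition (\ref{condition-a}) reduces to $\Hom_{\ho{\ms C}}(z,y[n-1])=0$ for $z,y$ in the heart of $\sigma$ and $n\leq 0$, which holds by abelianness of the heart. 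For condition (\ref{condition-b}), take any $a\in(0,1)$: then $\mca P_{L}(a,a+1]$ identifies with $\mca P(a,a+1]$ and $\mca P_{R}(a,a+1]$ identifies with $\mca P(a-1,a]$, so for semistable $z\in\mca P(a,a+1]$ and semistable $x\in\mca P(a-1,a]$, the phase of $x[n]$ with $n\leq 0$ is at most $a$, strictly less than $\phi(z)$, hence $\Hom(z,x[n])=0$ by the slicing axiom. The computation for $d_{1}^{*}$ is entirely symmetric using $d_{0}\dashv s$ and $d_{0}\circ j_{*}=\1$, with the shift by $[+1]$ playing the analogous role.

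With both hypotheses verified, Proposition \ref{CP2.2} yields a unique reasonable stability condition on $\ho{\ms C^{\Delta^{1}}}$ in each case, so $d_{0}^{*}$ and $d_{1}^{*}$ are well-defined into $\Stabr{\ho{\ms C^{\Delta^{1}}}}$. Continuity follows from two observations: first, the assignments $\sigma\mapsto(\sigma,[-1]\sigma)$ and $\sigma\mapsto([1]\sigma,\sigma)$ are continuous because the shift autoequivalences act continuously on the space of stability conditions; second, by the last sentence of Proposition \ref{CP2.2}, the gluing operation is continuous on the set of pairs satisfying the Hom-vanishing conditions. Composing these gives the continuity of $d_{0}^{*}$ and $d_{1}^{*}$. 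The main obstacle is largely bookkeeping: choosing the correct shifts on each component so that all phase intervals separate properly, but once the adjunctions $d_{0}\dashv s\dashv d_{1}$ are invoked, the Hom-vanishing follows immediately from either heart-abelianness or a single phase comparison.
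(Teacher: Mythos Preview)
Your proof is correct and follows essentially the same approach as the paper: you use the semiorthogonal decompositions of Lemma~\ref{lm:SOD}, reduce the Hom-vanishing hypotheses of Proposition~\ref{CP2.2} to a computation in $\ho{\ms C}$ via the adjunction $s\dashv d_{1}$ (resp.\ $d_{0}\dashv s$), and invoke the continuity of the gluing construction. The paper's proof is terser---it checks the adjunction identity $\Hom_{\ho{\ms C^{\Delta^{1}}}}(s(x),j_{!}(y))\cong\Hom_{\ho{\ms C}}(x,y)$ and then asserts both conditions hold---whereas you spell out condition~(\ref{condition-b}) via an explicit phase comparison; this is a difference in detail, not in method.
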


\begin{proof}
Let $\mca P$ be the slicing of $\sigma \in \Stabr{\ho{\ms C}}. $
Then the heart of $\sigma$ and $[-1]\sigma$ are respectively $\mca P\left((0,1]\right)$ and $\mca P\left((-1,0]\right)$. 
Thus the conditions (\ref{condition-a}) and (\ref{condition-b}) in Proposition \ref{CP2.2} holds since 
\[
\Hom_{\ho {\ms C^{\Delta^1}}}\left(s(x), j_!(y)\right)\cong \Hom_{\ho{\ms C}}(x, d_{1}\circ j_{!}(y))=\Hom_{\ho {\ms C}}(x,y)
\]
 for $x, y \in \ho{\ms C}$.
 Hence $d_0^* \sigma =\gl {\sigma} {[-1]\sigma}$ does exists. 
Continuity of the gluing construction follows from \cite[Theorem 4.3]{MR2721656}. 
The second assertion is similar. 
\end{proof}

\begin{lem}\label{lm:4.6}
Let $\sigma=(\mca A, Z)$ be in $\Stabr{\ho{\ms C}}$ and let $d_{i}^{*}\sigma=(d_{i}^{*}\mca A, d_{i}^{*}Z)$ be the pull back of $\sigma$ by $d_{i}$ for $i \in \{1,2 \}$. 
\begin{enumerate}
\item An object $f \in \ho{\ms C^{\Delta^{1}}}$ is in $d_{0}^{*}\mca A$ if and only if both $d_{0}f$ and $\cof f$ are in $\mca A$. 
\item If an object $x$ is in $\mca A$, then $\arg d_{0}^{*}Z(s(x))=\arg Z(x)$ and $\arg d_{0}^{*}Z(j_{!}(x[-1]))=\arg Z(x)$. 
\item An object $f \in \ho{\ms C^{\Delta^{1}}}$ is in $d_{1}^{*}\mca A$ if and only if both $d_{1}f$ and $\fib f$ are in $\mca A$. 
\item If an object $x$ is in $\mca A$, then $\arg d_{1}^{*}Z(s(x))=\arg Z(x)$ and $\arg d_{1}^{*}Z(j_{*}(x[-1]))=\arg Z(x)$. 
\end{enumerate}
\end{lem}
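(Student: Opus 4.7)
The strategy is to unpack the gluing construction of Proposition \ref{CP2.2} for the two pullbacks $d_0^*\sigma$ and $d_1^*\sigma$, using the explicit semiorthogonal decompositions of Lemma \ref{lm:SOD} to identify the projection functors $\tau_1, \tau_2$, and then applying Lemma \ref{lm:CP} to translate the heart conditions $\tau_i f \in \mca P_i((0,1])$ back to statements about $\sigma$ on $\ho{\ms C}$ under the equivalences $s$, $j_!$, $j_*$.

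For parts (1) and (2), I work with $d_0^*\sigma = \gl{\sigma}{[-1]\sigma}$ along $\ho{\ms C^{\Delta^1}} = \< \mb C_s, \mb C_{/0} \>$. For any $f = [x \to y]$, the distinguished triangle (\ref{fiberseq}) reads
\[
j_!(\fib f) \to f \to s(d_0 f) \to j_!(\fib f)[1],
\]
exhibiting the SOD decomposition with $\tau_1 f = s(d_0 f)$ and $\tau_2 f = j_!(\fib f)$. Transporting the heart condition $\tau_1 f \in \mca P_1((0,1])$ along $s$ yields $d_0 f \in \mca A$, while transporting $\tau_2 f \in \mca P_2((0,1])$ along $j_!$ yields $\fib f \in \mca P((-1,0])$, equivalently $\cof f = \fib f[1] \in \mca A$; this proves (1). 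For (2), the central charge formula $d_0^*Z(f) = Z_1(\tau_1 f) + Z_2(\tau_2 f)$ specialises in the two cases: when $f = s(x)$ the triangle collapses with $\tau_2 f = 0$, giving $d_0^*Z(s(x)) = Z(x)$; when $f = j_!(x[-1])$ it collapses the other way with $\tau_1 f = 0$, giving $d_0^*Z(j_!(x[-1])) = Z_2(j_!(x[-1]))$, which equals $Z(x)$ because the central charge of $[-1]\sigma$ is $-Z$ and $Z(x[-1]) = -Z(x)$. The hypothesis $x \in \mca A$ combined with part (1) places both $s(x)$ and $j_!(x[-1])$ inside $d_0^*\mca A$, so taking arguments yields $\arg Z(x)$ in both cases.

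Parts (3) and (4) will then follow by the mirror argument using the SOD $\ho{\ms C^{\Delta^1}} = \< \mb C_{0/}, \mb C_s \>$ and $d_1^*\sigma = \gl{[1]\sigma}{\sigma}$. The relevant distinguished triangle is now $s(d_1 f) \to f \to j_*(\cof f)$, yielding $\tau_2 f = s(d_1 f)$ and $\tau_1 f = j_*(\cof f)$; the heart condition on $j_*(\cof f)$ becomes $\cof f \in \mca A[1]$, equivalently $\fib f \in \mca A$. The central-charge bookkeeping is symmetric, with the roles of $j_!, [-1]\sigma$ replaced by $j_*, [1]\sigma$. The only subtlety I anticipate is tracking the sign and phase conventions of the shift action on $\Stab{\ho{\ms C}}$ through the identifications $s, j_!, j_*$; once these are pinned down, every step reduces to a direct unwinding of the gluing construction together with the vanishing of one of the $\tau_i f$ on objects coming from one component of the SOD.
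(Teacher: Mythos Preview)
Your proposal is correct and is precisely the unwinding that the paper's one-line proof (``The construction of $d_{0}^{*}\sigma$ and $d_{1}^{*}\sigma$ directly implies the assertions'') leaves to the reader; the identification of $\tau_1,\tau_2$ via the triangles $j_!(\fib f)\to f\to s(d_0 f)$ and $s(d_1 f)\to f\to j_*(\cof f)$, together with the heart descriptions of $[\pm1]\sigma$, is exactly right, and Lemma~\ref{lm:CP} is not actually needed since Proposition~\ref{CP2.2} already phrases the glued heart in terms of $\tau_i$.

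One small caution for the mirror case: your closing remark that $x\in\mca A$ forces $s(x)$ and $j_!(x[-1])$ into $d_0^*\mca A$ does \emph{not} carry over symmetrically to part~(4), since $j_*(x[-1])$ has $\fib = x[-2]\notin\mca A$ and hence $j_*(x[-1])\notin d_1^*\mca A$. This is harmless, however, because that observation was never needed: the lemma asserts equality of $\arg$ of complex numbers, and you have already shown $d_0^*Z(j_!(x[-1]))=Z(x)$ and (by the same bookkeeping) $d_1^*Z(j_*(x[-1]))=Z(x)$ as values in $\bb C$.
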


\begin{proof}
The construction of $d_{0}^{*}\sigma$ and $d_{1}^{*}\sigma $ directly implies the assertions. 
\end{proof}

\begin{prop}\label{pr:fundamental}
Let $\ms C$ be a stable infinity category. 
Then the induced maps 
\[
\xymatrix{
\Stabr {\ho {\ms C}}	\ar@<1ex>[r]|-{d_0^*} \ar@<-1ex>[r]|(.45){d_1^*}	&	\Stabr {\ho {\ms C^{\Delta^1}}}
}
\]
satisfy the following:
\begin{enumerate}
\item $d_0^*$ and $d_1^*$ are both injective. 
\item The image $d_0^*$ does not intersect the image of $d_{1}^{*}$. 
\end{enumerate}
\end{prop}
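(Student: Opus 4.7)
My plan is to use Lemma \ref{lm:CP}, which recovers the input stability conditions of a gluing by restricting the slicing to each semiorthogonal component. The key structural observation is that the two semiorthogonal decompositions used to define the pullbacks share the common component $\mb C_s \cong \ho{\ms C}$ carrying the un-shifted copy of $\sigma$: namely $\ho{\ms C^{\Delta^1}} = \<\mb C_s, \mb C_{/0}\>$ for $d_0^{*}$ and $\ho{\ms C^{\Delta^1}} = \<\mb C_{0/}, \mb C_s\>$ for $d_1^{*}$. Both assertions will be extracted from this coincidence.

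For \emph{injectivity}, if $d_0^{*}\sigma = d_0^{*}\sigma' =: \tau$, then Lemma \ref{lm:CP} gives
\[
s\!\left(\mca P_\sigma(\phi)\right) = \mca P_\tau(\phi) \cap \mb C_s = s\!\left(\mca P_{\sigma'}(\phi)\right),
\]
while the central charge formula in Proposition \ref{CP2.2} applied to $s(x) \in \mb C_s$ (whose $\mb C_{/0}$-component is trivial) gives $Z_\sigma(x) = Z_\tau(s(x)) = Z_{\sigma'}(x)$. Hence $\sigma = \sigma'$. The argument for $d_1^{*}$ is identical after swapping the two decompositions.

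For the \emph{disjointness of images}, suppose $\tau := d_0^{*}\sigma_0 = d_1^{*}\sigma_1$. The same restriction argument applied to the common component $\mb C_s$ forces $\sigma_0 = \sigma_1 =:\sigma$. To reach a contradiction I will evaluate $Z_\tau$ on $j_!(x) \in \mb C_{/0}$ for some $0 \neq x \in \mca A_\sigma$ in two different ways. Using the decomposition $\<\mb C_s, \mb C_{/0}\>$ for $d_0^{*}\sigma$, the object $j_!(x)$ already sits inside $\mb C_{/0}$, so Proposition \ref{CP2.2} together with the identity $Z_{[-1]\sigma} = -Z_\sigma$ on $K_0$ yields $Z_\tau(j_!(x)) = -Z_\sigma(x)$ (compatible with Lemma \ref{lm:4.6}(2)). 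Using the decomposition $\<\mb C_{0/}, \mb C_s\>$ for $d_1^{*}\sigma$, the projection triangle
\[
s(x) \to j_!(x) \to j_*(x[1])
\]
inside $\ho{\ms C^{\Delta^1}}$ gives $Z_\tau(j_!(x)) = Z_{[1]\sigma}(x[1]) + Z_\sigma(x) = 2 Z_\sigma(x)$. Equating the two expressions forces $3 Z_\sigma(x) = 0$, hence $Z_\sigma(x) = 0$, which contradicts the stability axiom $Z_\sigma(\mca A_\sigma \setminus \{0\}) \subset \bb H \cup \bb R_{<0}$.

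The main obstacle is the bookkeeping for the two central charge computations: one must correctly identify the $\mb C_s$- and $\mb C_{0/}$-components of $j_!(x)$ under the decomposition $\<\mb C_{0/}, \mb C_s\>$ (via the explicit triangle above), and keep track of the sign twist introduced by $[\pm 1]\sigma$ acting on the respective summands. Once these identifications are in place, the contradiction is a one-line arithmetic check.
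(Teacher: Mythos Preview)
Your proof is correct and follows essentially the same approach as the paper's own argument: both use Lemma~\ref{lm:CP} to restrict to the common component $\mb C_s$ for injectivity and to force $\sigma_0=\sigma_1$ in part~(2), and then derive a contradiction by evaluating the glued central charge in two ways. The only cosmetic difference is that the paper computes $d_0^*Z(f)$ and $d_1^*W(f)$ on a general $f$ (obtaining $Z(y)-Z(\fib f)=Z(y)+2Z(\fib f)$, hence $3Z(\fib f)=0$), whereas you specialize immediately to $f=j_!(x)$ with $x\in\mca A\setminus\{0\}$; since $\fib(j_!(x))=x$ and $d_0(j_!(x))=0$, these are literally the same computation.
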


\begin{proof}
Since the proof is similar, we only prove the assertion (1) for $d_0^*$.

Let $\sigma=(Z, \mca P)$ and $\tau = (W, \mca Q)$ be in $\Stabr {\ho{\ms C}}$, and suppose $d_0^* \sigma = d_0^* \tau$. 
Then they have the same central charge and this implies $Z= W$. 
It is enough to show $\mca P= \mca Q$ for the conclusion. 
Recall the semiorthogonal decomposition $\ho{\ms C^{\Delta^1}}= \< \mb D_{0}^{L}, \mb D_{0}^{R} \>$ in Lemma \ref{lm:SOD}. 
Since $\mb D_{0}^{L}=\mb C_{s}$, Proposition \ref{CP2.2} implies 
\[
\mca P(\phi) = d_0^* \mca P(\phi) \cap \mb C_{s} = d_0^* \mca Q(\phi) \cap \mb C_{s} = \mca Q(\phi). 
\]
This gives the proof of the first assertion.

Let 
$\ho{\ms C^{\Delta^1}}=\<  \mb D_0^L, \mb D_0^R  \>$ (resp. $\< \mb D_1 ^L, \mb D_1 ^R  \>$) be the semiorthogonal decomposition in Lemma \ref{lm:SOD}. 
Recall $\mb D_0^L=\mb D_1^R =\mb C_{s}$. 
Take $d_0^* \sigma \in \Im d_0^* $ and $d_1^* \tau \in \Im d_1^* $. 
Suppose that $d_0^* \sigma = d_1^* \tau$ for some $\sigma=(Z, \mca P) $ and $\tau = (W, \mca Q) \in \Stabr {\ho {\ms C}}$. 
Lemma \ref{lm:CP} implies  
\[
\mca P(\phi) = d_0^*\mca P \cap \mb D_0^L = d_1^* \mca Q \cap \mb D_0^L = d_1 ^* \mca Q \cap \mb D_1^R = \mca Q(\phi). 
\]
Furthermore we see 
\[
Z = d_0^* Z | _{\mb D^L_0} = d_1^* W |_{\mb D^L_0}= d_1 ^* W | _{\mb D^R_1} = W. 
\]
Hence $Z$ is the same as $W$. 
For $[f \colon x \to y] \in \ho {\ms C ^{\Delta^1}}$, we have 
\[
d_0^* Z (f) = Z(y)-Z(\fib f)\mbox{ and }d_1^*W(f)=W(x)-W(\cof f) . 
\]
Since $\fib f [1]=\cof f$, we see
$d_1^*W(f) = W(x) + W(\fib f) = W(y) + 2 W(\fib f)$. 
This gives a contradiction. 
\end{proof}

\subsection{Fullness of $d_0^* \sigma$ and $d_1^* \sigma$  }

The aim is to prove the fullness of $d_0^* \sigma$ and of $d_1^*\sigma$ are equivalent to that of $\sigma$. The following is a key ingredient in the proof. 
 
\begin{prop}\label{object-stable}
Let $\ms C$ be a stable infinity category and let $\sigma=(\mca A, Z)$ be a reasonable stability condition on $\ho {\ms C}$. 
\begin{enumerate}
\item[$(1)$] Suppose that $f \in \ms C^{\Delta^1}$ is semistable in $d_0^* \sigma $ with the phase $\phi$. Then we have
\begin{enumerate}
\item[$(1a)$] $d_0 f = y$ is $\sigma $-semistable, 
\item[$(1b)$] $\cof f=\fib f[1]$ is $\sigma $-semistable, and 
\item[$(1c)$] Moreover $\arg Z(y)=\arg Z(\cof f)= \phi$. 
\end{enumerate}
\item[$(2)$] Suppose that $f \in \ms C^{\Delta^1}$ is semistable in $d_1^* \sigma $ with the phase $\phi$. Then we have
\begin{enumerate}
\item[$(2a)$] $d_1 f = x$ is $\sigma $-semistable, 
\item[$(2b)$] $\fib f=\cof f[-1]$ is $\sigma $-semistable, and 
\item[$(2c)$] Moreover $\arg Z(x)=\arg Z(\fib f)= \phi$. 
\end{enumerate}
\end{enumerate}
\end{prop}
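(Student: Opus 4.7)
The plan is to exploit the distinguished triangle $j_!(\fib f) \to f \to s(y) \to j_!(\fib f)[1]$ coming from the semiorthogonal decomposition $\ho{\ms C^{\Delta^{1}}} = \langle \mb C_s, \mb C_{/0} \rangle$ of Lemma \ref{lm:SOD}. By shifting I may reduce to the case $\phi \in (0,1]$, so $f$ lies in the heart $d_0^*\mca A$, and Lemma \ref{lm:4.6}(1) then places both $y = d_0 f$ and $\cof f$ in $\mca A$; the two flanking terms of the triangle also lie in $d_0^*\mca A$, making it a short exact sequence in the heart. Semistability of $f$, together with the phase identifications of Lemma \ref{lm:4.6}(2) and the compatibility of slicings in Lemma \ref{lm:CP}, immediately yields the weak bounds $\phi_\sigma^+(\cof f) \leq \phi$ and $\phi_\sigma^-(y) \geq \phi$.

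The main step is to upgrade these to genuine semistability via two Hom-vanishing contradictions. Suppose the first $\sigma$-HN factor $A_1 \hookrightarrow y$ had phase $\phi_1 > \phi$; then $s(A_1) \hookrightarrow s(y)$ is $d_0^*\sigma$-semistable of phase $\phi_1$ by Lemma \ref{lm:CP}, so $\Hom(s(A_1), f) = 0$. Applying $\Hom(s(A_1), -)$ to the triangle, the boundary $\Hom(s(A_1), s(y)) \to \Hom(s(A_1), j_!(\fib f)[1])$ becomes injective; under the adjunction $s \dashv d_1$ its target identifies with $\Hom(A_1, \cof f)$ and the map with post-composition by the canonical quotient $y \to \cof f$, so the nonzero inclusion $A_1 \hookrightarrow y$ produces a nonzero composite $A_1 \to y \to \cof f$ contradicting $\Hom(A_1, \cof f) = 0$ (which holds since $\phi_1 > \phi \geq \phi_\sigma^+(\cof f)$). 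Hence $y$ is $\sigma$-semistable of phase $\phi$. A dual argument, starting from the surjection $j_!(\fib f) \twoheadrightarrow j_!(D_p[-1])$ where $D_p$ is the last $\sigma$-HN factor of $\cof f$ with phase $\gamma_p < \phi$, and applying $\Hom(-, j_!(D_p[-1]))$ to the triangle, yields a nonzero composite $y \to \cof f \twoheadrightarrow D_p$ in $\Hom(y, D_p)$, contradicting $\phi_\sigma^-(y) \geq \phi > \gamma_p$; hence $\cof f$ is $\sigma$-semistable of phase $\phi$.

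Part (2) is formally dual, using the opposite semiorthogonal decomposition $\langle \mb C_{0/}, \mb C_s \rangle$, the canonical triangle $s(x) \to f \to j_*(\cof f)$, and parts (3) and (4) of Lemma \ref{lm:4.6} in place of (1) and (2). The main technical hurdle is keeping track of the shifts under the identifications $\mb C_{/0} \cong \ho{\ms C}$ (carrying the $[-1]\sigma$ stability) and $\mb C_{0/} \cong \ho{\ms C}$ (carrying $[1]\sigma$), and verifying that the boundary maps in the SOD triangles correspond under the adjunctions $s \dashv d_1$ and $d_0 \dashv s$ to the natural quotient and fiber morphisms in $\ho{\ms C}$; once this bookkeeping is in place, the Hom-vanishing contradictions are routine.
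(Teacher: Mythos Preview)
Your argument is correct and takes a genuinely different route from the paper's. The paper proves (1a) by constructing, from a destabilising subobject $a\subset y$, an explicit subobject $[h\colon \ker g\to a]$ of $f$ in the heart $d_0^*\mca A$ (where $g$ is the composite $a\subset y\to\cof f$) and comparing arguments of central charges; it then proves (1b) by a dual explicit quotient construction, and finally handles (1c) by a separate splitting argument showing that if the two phases differed then $f$ would decompose. Your approach bypasses all of these explicit constructions: you first extract the two weak bounds $\phi_\sigma^+(\cof f)\le\phi\le\phi_\sigma^-(y)$ directly from the short exact sequence $j_!(\fib f)\hookrightarrow f\twoheadrightarrow s(y)$ together with Lemma~\ref{lm:CP}, and then upgrade each bound to an equality via a single $\Hom$-vanishing contradiction, exploiting the adjunction $s\dashv d_1$ to identify the boundary map of the triangle with post- or pre-composition by the canonical $y\to\cof f$. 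This simultaneously yields semistability and the phase equalities, so (1c) comes for free rather than requiring a separate step. The paper's approach is more hands-on and stays entirely within the abelian heart, which makes the sub/quotient structure in $\ho{\ms C^{\Delta^1}}$ very explicit (useful later, e.g.\ in Lemma~\ref{lm:classification}); your approach is shorter and more categorical, and would adapt more readily to other gluing situations where the adjunctions are available but explicit descriptions of morphisms in the glued heart are cumbersome.
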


\begin{proof}
Put $d_0 f=y, d_1 f =x$. 
The central charge of $d_0^* \sigma$ is denoted by $d_0^* Z$ and the heart of $d_0^* \sigma$ is denoted by $d_0^*\mca A$. 

We first show the assertion $(1a)$. 
We may assume that the phase $\phi $ is in $(0,1] \subset \bb R$ without loss of generality. 
Then the object $f$ is in the heart $d_0^* \mca A$. 
In particular both $y$ and $\cof f$ belong to the heart $\mca A$ by Lemma \ref{lm:4.6}.

Suppose to the contrary that $y$ is not $\sigma$-semistable. 
Then there is a subobject $a$ of $y$ such that  $a$ is $\sigma $-semistable with 
\begin{equation}
\arg Z(a) >\arg Z(y) \label{a>y}. 
\end{equation}

From the definition of $d_{0}^{*}\mca A$, we have the following short exact sequence in $d_{0}^{*}\mca A$: 
\[
\begin{CD}
0	@>>>	j_{!}(\fib f)	@>>>	f	@>>>	\1_y	@>>>	0. 
\end{CD}
\]
Since $f$ is semistable in $d_0^* \sigma$, we have 
\begin{equation}
\arg Z (\cof f)= \arg d_0 ^* Z (j_{!}(\fib f))	\leq \arg d_0^*Z(f) \leq \arg d_0^* Z(\mr{id}_y) =\arg Z(y). 	\label{ni}
\end{equation}

Let $g$ be the composition of morphisms $g \colon a \subset y \to \cof f$. 
Then we have a commutative diagram in $\ho{\ms C}$: 
\[
\xymatrix{
\ker g	\ar[r]\ar[d]_{h}	&	x	\ar[d]^{f}	\\
a	\ar[r]	&	y. 
}
\]
Then we have a morphism $\tau \colon h \to f$ in $\ho{\ms C^{\Delta^{1}}}$ by Lemma \ref{lm:difference}. 
Since $\cof h = \im g$ is a colimit in the infinity category $\ms C$, a morphism $\cof \tau \colon \cof h \to \cof f$ in $\ms C$ induces the monomorphism $\xymatrix{\im g \ar@{^{(}->}[r]& \cof f}$ in $\ho{\ms C}$. 
Since $d_{0}\tau \colon d_{0}h \to d_{0}f$ is also 
monomorphism in $\mca A$, the mapping cone $\cof (h \to f)$ in $\ho{\ms C^{\Delta^{1}}}$ is in $d_{0}^{*} \mca A$ by Lemma \ref{lm:4.6}. 
Thus $[h \colon \ker g \to a]$ is a subobject of $f$. 
Since $f$ is $d_0^*\sigma$-semistable we see 
\begin{equation}
\arg d_0^*Z(h) \leq \arg d_0^*Z(f). 	\label{ichi}
\end{equation}

Since the morphism $a\to \im g$ is an epimorphism in $\mca A$ from the semistable object $a$  we see $\arg Z(a) \leq \arg Z(\im g)$. 
Moreover the definition of $d_0^*Z$ implies 
\begin{equation}
\arg Z (a) \leq \arg d_0^*Z(h) \leq \arg Z(\im g). 	\label{a<h}
\end{equation}
Hence the inequalities (\ref{a>y}), (\ref{a<h}) and (\ref{ni}) imply the following inequality 
\[
\arg d_0^*Z(f)	\leq \arg Z(y) < \arg Z(a) \leq \arg d_0^* Z(h)
\]
which contradicts (\ref{ichi}). 
Hence $y$ is $\sigma$-semistable.

For the proof of (1b), 
suppose to the contrary that $\cof f$ is not $\sigma$-semistable. 
Then there exists a quotient $a'$ of $\cof f$ in $\mca A$ such that $a'$ is $\sigma $-semistable with 
\begin{equation}
	\arg Z(a')	<	\arg Z(\cof f). 	\label{a'<cof}
\end{equation}
The inequality (\ref{ni}) implies 
\begin{equation*}
\arg Z(a')	<	\arg Z(y). 	
\end{equation*}
Since $y$ is semistable, we have $\Hom_{\ho{\ms C}}(y, a')=0$. 
Thus we obtain the following diagram of distinguished triangles in $\ho {\ms C}$ by $3 \times 3$ Lemma:
\[
\xymatrix{
z\ar[r]\ar[d]		&	x	\ar[r]\ar[d]_f	&	a'[-1]	\ar[d]_{h'}\\
y	\ar[r]\ar[d]		&	y	\ar[r]\ar[d]	&	0	\ar[d]\\
d	\ar[r]		&	\cof f	\ar[r]		&	a'
}
\]
In the diagram above, $d$ is the kernel of the morphism $\cof f \to a'$ in $\mca A$. 
By Lemma \ref{lm:difference}, the diagram above gives a morphism $f \to h'$ in $\ho{\ms C^{\Delta^{1}}}$. 
Since $\fib(f \to h')$ is isomorphic to $[z \to y]$, the object $\fib(f \to h')$ is in $d_{0}^{*}\mca A$ by Lemma \ref{lm:4.6} and we see that $h'$ is a quotient of $f$ in $d_{0}^{*}\mca A$. 
The following inequality holds since $f$ is $d_0^*\sigma$-semistable:
\begin{equation*}
\arg d_0^* Z(f)	\leq \arg d_0^* Z(h')  =\arg Z(a'). 
\end{equation*}
On the other hand the above inequality contradicts (\ref{ni}) and (\ref{a'<cof}).  
Hence $\cof f$ is semistable.

Finally we prove (1c). 
If $\arg Z(\cof f)\neq  \arg Z(y)$ then we have $\Hom_{\ho {\ms C}}(y, \cof f)=0$ by the inequality (\ref{ni}).  
Hence we have $\Hom_{\ho {\ms C^{\Delta^1}}} \left(\1_y, [\cof f \to 0]  \right)=0$ and this implies 
\[
f	\cong \1_y	\+		\left[ (\cof f)[-1]	\to 0 \right]. 
\]
Since $f$ is semistable $(\cof f)[-1]  =\fib f  =0$ or $y = 0$. 

The same argument for the opposite infinity category $\ms C^{\mr{op}}$ of $\ms C$ implies the second part $(2)$. 
\end{proof}

\begin{prop}\label{pr:semistable}
Let $\sigma =(\mca A, Z)$ be a reasonable stability condition on $\ho {\ms C}$. 

\begin{enumerate}
\item For an object $f \in \ms C^{\Delta^1}$, $f$ is $d_0^* \sigma$-semistable in phase $\phi$ if and only if $d_0 f$ and $\cof f$ are $\sigma$-semistable in phase $\phi$. 
\item For an object $f \in \ms C^{\Delta^1}$, $f$ is $d_1^* \sigma$-semistable in phase $\phi$ if and only if $d_1 f$ and $\fib f$ are $\sigma$-semistable in phase $\phi$. 
\end{enumerate}
\end{prop}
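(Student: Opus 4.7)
The forward direction of Proposition \ref{pr:semistable} is already supplied by Proposition \ref{object-stable}: conditions (1a), (1b), (1c) together state precisely that $d_0 f$ and $\cof f$ are $\sigma$-semistable of a common phase $\phi$, and similarly for item (2). So I only need to establish the converse, and, as in the closing remark of the proof of Proposition \ref{object-stable}, the second case follows from the first by passing to the opposite infinity category $\ms C^{\mr{op}}$. Thus I will plan only (1).

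Assume $y := d_0 f$ and $\cof f$ are $\sigma$-semistable of phase $\phi$. The plan is to realise $f$ as an extension, inside the slicing of $d_0^*\sigma$, of two objects that are already $d_0^*\sigma$-semistable of phase $\phi$, and then to invoke the standard fact that each slice of a slicing is closed under extensions (an immediate consequence of the uniqueness of Harder--Narasimhan filtrations). Concretely, I would proceed in four steps.

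\textbf{Step 1.} Use the distinguished triangle
\[
j_!(\fib f) \longrightarrow f \longrightarrow s(y)
\]
in $\ho{\ms C^{\Delta^1}}$ coming from the unit of the adjunction $d_0 \dashv s$; this is exactly the triangle (\ref{fiberseq}) written under the identifications $j_!$ and $s$ of Remark~\ref{convention}.

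\textbf{Step 2.} Apply Lemma \ref{lm:CP} to the gluing $d_0^*\sigma = \gl{\sigma}{[-1]\sigma}$ furnished by Proposition \ref{pr:pullback}. Writing $d_0^*\mca P$ for the slicing of $d_0^*\sigma$ and $\mca P'$ for that of $[-1]\sigma$, so that $\mca P'(\phi) = \mca P(\phi-1)$, one obtains
\[
d_0^*\mca P(\phi) \cap \mb C_s = s\bigl(\mca P(\phi)\bigr),\qquad d_0^*\mca P(\phi) \cap \mb C_{/0} = j_!\bigl(\mca P'(\phi)\bigr).
\]

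\textbf{Step 3.} Check that both endpoints of the triangle lie in $d_0^*\mca P(\phi)$. Since $y \in \mca P(\phi)$ by hypothesis, $s(y) \in d_0^*\mca P(\phi)$. Since $\cof f \in \mca P(\phi)$, we have $\fib f = \cof f[-1] \in \mca P(\phi-1) = \mca P'(\phi)$, whence $j_!(\fib f) \in d_0^*\mca P(\phi)$.

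\textbf{Step 4.} Conclude by extension-closure of $d_0^*\mca P(\phi)$ that $f \in d_0^*\mca P(\phi)$, i.e.\ $f$ is $d_0^*\sigma$-semistable of phase $\phi$. For assertion (2), the same argument applies with the triangle $s(x) \to f \to j_*(\cof f)$ and the decomposition $\< \mb C_{0/}, \mb C_s \>$ from Lemma \ref{lm:SOD}, now with $[1]\sigma$ playing the role of $[-1]\sigma$ and $\mca P''(\phi) = \mca P(\phi+1)$.

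I do not anticipate a serious obstacle here: the substantive input is Lemma \ref{lm:CP}, and what remains is simply the bookkeeping verification that the phase conventions of $[\pm 1]\sigma$ on the complementary component line up with $d_0^*\sigma$ (resp.\ $d_1^*\sigma$). The only subtlety worth flagging is that extension-closure of each $\mca P(\phi)$ is used in a triangulated (not abelian) sense; this is standard from Bridgeland's axioms but should be invoked explicitly.
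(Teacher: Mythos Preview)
Your proposal is correct and follows essentially the same route as the paper: the forward direction is Proposition~\ref{object-stable}, and for the converse you invoke Lemma~\ref{lm:CP} to see that $s(y)$ and $j_!(\fib f)$ lie in $d_0^*\mca P(\phi)$, then conclude by extension-closure using the triangle~(\ref{fiberseq}). The paper's proof is terser but identical in substance.
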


\begin{proof}
``Only if " part follows from Proposition \ref{object-stable}. 
``If" part is a direct consequence of Lemma \ref{lm:CP} as follows. 
Lemma \ref{lm:CP} implies both $s(y)$ and $j_!(\fib f)$ are $d_0^* \sigma$-semistable. 
Since $f$ is given by the extension of $s(y)$ and $j_!(\fib f)$, $f$ is $d_0^*\sigma$-semistable. 
This give the proof of the first assertion and the proof of the second assertion is similar. 
\end{proof}

\begin{prop}\label{pr:support}
Let $\sigma =(\mca A, Z)$ be a reasonable stability condition on $\ho {\ms C}$. 
The stability condition $\sigma$ satisfies the support property, if and only if $d_0^* \sigma$ (resp. $d_1^*\sigma$) satisfies the support property. 
\end{prop}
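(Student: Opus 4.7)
The plan is to deduce the equivalence directly from the characterization of semistable objects in Proposition \ref{pr:semistable}. As a preliminary, fix any norm $\|\cdot\|$ on $K_{0}(\ho{\ms C})_{\bb R}$. Since the semiorthogonal decomposition $\ho{\ms C^{\Delta^{1}}}=\<\mb C_{s},\mb C_{/0}\>$ of Lemma \ref{lm:SOD} identifies $K_{0}(\ho{\ms C^{\Delta^{1}}})$ with $K_{0}(\ho{\ms C})^{\oplus 2}$ via $[f]\leftrightarrow ([d_{0}f],[\fib f])$, set
\[
\|f\|' \;:=\; \|[d_{0}f]\|+\|[\fib f]\|.
\]
Because $\rank K_{0}(\ho{\ms C})<\infty$ the induced rank of $K_{0}(\ho{\ms C^{\Delta^{1}}})$ is finite, so all norms are equivalent and the support property does not depend on which norm we pick.

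For the direction ($\sigma$ full $\Rightarrow$ $d_{0}^{*}\sigma$ full), let $C$ be a support constant for $\sigma$. Take any $d_{0}^{*}\sigma$-semistable object $f$. By Proposition \ref{pr:semistable}(1), both $d_{0}f$ and $\cof f$ are $\sigma$-semistable and lie in the \emph{same} phase $\phi$; thus $Z(d_{0}f)$ and $Z(\cof f)$ are non-negative real multiples of $e^{i\pi\phi}$, and therefore
\[
|d_{0}^{*}Z(f)| \;=\; |Z(d_{0}f)+Z(\cof f)| \;=\; |Z(d_{0}f)|+|Z(\cof f)|.
\]
Combining this with $\|f\|'=\|[d_{0}f]\|+\|[\cof f]\|$ (using $\|[\fib f]\|=\|[\cof f]\|$, since a norm on $K_{0}\otimes\bb R$ is symmetric) yields
\[
\frac{\|f\|'}{|d_{0}^{*}Z(f)|} \;\le\; \max\!\left\{\frac{\|[d_{0}f]\|}{|Z(d_{0}f)|},\frac{\|[\cof f]\|}{|Z(\cof f)|}\right\} \;\le\; C,
\]
where summands with vanishing class are simply omitted. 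This is the support property for $d_{0}^{*}\sigma$.

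For the converse, assume $d_{0}^{*}\sigma$ satisfies the support property with constant $C'$. Given any $\sigma$-semistable $E$, Lemma \ref{lm:CP} shows that $s(E)\in\mb C_{s}$ is $d_{0}^{*}\sigma$-semistable. Using $d_{0}s(E)=E$ and $\fib(s(E))=0$ we find $\|s(E)\|'=\|[E]\|$ and $|d_{0}^{*}Z(s(E))|=|Z(E)|$, so $\|[E]\|/|Z(E)|\le C'$. The argument for $d_{1}^{*}\sigma$ is entirely parallel: one uses the other semiorthogonal decomposition $\<\mb C_{0/},\mb C_{s}\>$, Proposition \ref{pr:semistable}(2) to get collinearity of $Z(d_{1}f)$ and $Z(\fib f)$, and $s(E)\in\mb C_{s}$ in the reverse direction. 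I do not anticipate a serious obstacle here; the content is really the phase coincidence supplied by Proposition \ref{pr:semistable}, which immediately turns the triangle inequality for $|d_{i}^{*}Z(f)|$ into an equality and converts a norm estimate into one matching the components.
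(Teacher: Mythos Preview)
Your proof is correct and follows essentially the same route as the paper: both arguments fix a norm on $K_{0}(\ho{\ms C^{\Delta^{1}}})_{\bb R}$ built from the semiorthogonal decomposition, invoke Proposition~\ref{pr:semistable} to turn $|d_{0}^{*}Z(f)|$ into $|Z(d_{0}f)|+|Z(\cof f)|$ via the phase coincidence, and use the embedding $s$ for the converse. The only cosmetic differences are that the paper chooses the $\ell^{2}$-type norm $\|f\|_{d_{0}}=\sqrt{\|d_{0}f\|^{2}+\|\cof f\|^{2}}$ and bounds directly, whereas you use the $\ell^{1}$ norm together with the mediant inequality.
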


\begin{proof}
We only prove the assertion for $d_0^*$  since the proof is similar. 

Fix a norm $\|  - \|$ on $K_0(\ho {\ms C}) \otimes \bb R$. 
The Grothendieck group $K_0(\ho {\ms C^{\Delta^1}})$ is isomorphic to $K_0(\ho {\ms C})^{\+ 2}$ since $\ho {\ms C^{\Delta^1}}$ has a semiorthogonal decomposition by $\ho {\ms C}$. 
Consider the semiorthogonal decomposition $\ho{\ms C^{\Delta^{1}}} = \<\mb D_{0}^{L}, \mb D_{0}^{R}\>$.  
Let $\| - \|_{d_{0}}$ be a norm induced from the norm $\| -\|$ on $K_{0}(\ho{\ms C})\otimes \bb R$, that is, $\|  f \|_{d_{0}}:=\sqrt{\| d_{0}f \|^{2}+ \|  \cof f \|^{2}}$.

By Proposition \ref{pr:semistable}, $f \in \ms C^{\Delta^1}$ is $d_0^* \sigma$-semistable if and only if $d_0f $ and $\cof f $ are $\sigma$-semistable in the same phase. 
 Hence a $d_0^* \sigma$-semistable object $f$ satisfies 
\begin{align*}
|d_0^*Z(f)|  &=    |Z(d_{0}f) + Z(\cof f) | 	\\
			&=	|Z(d_{0}f)  | +|Z(\cof f)|	\\
			&\geq  \frac{1}{C}(	\|	d_{0}f	\|	+	\|	\cof  f\| ) \\
			&\geq  \frac{1}{C} \|f\|_{d_{0}}. 
			\end{align*}
In the first inequality above we use the support property for $\sigma$. 
Hence $d_0^* \sigma$ satisfies the support property.

Conversely suppose that $d_0^* \sigma$ satisfies the support property. 
Then there is a constant $C'$ such that 
$C' |Z(f)| \geq  \|f\|_{d_{0}}$ for any $d_0^* \sigma$-semistable object in $\ho{\ms C^{\Delta^{1}}}$. 
Take $f$ as the image $s(x)$ of $s \colon \ms C \to \ms C^{\Delta^1}$ where $x \in \ms C$ is $\sigma$-semistable. 
Since $s(x)$ is also $d_0^* \sigma$-semistable and the fact $Z(x)=d_0^*Z\left(s(x)\right)$, 
we have $C'|d_0^* Z\left(s(x)\right)| = C'|Z(x)| \geq  \| s(x) \|_{d_{0}} =   \|x \|$. 
Hence $\sigma$ satisfies the support property. 
\end{proof}

\begin{thm}\label{thm:full}
Let $\sigma$ be a reasonable stability condition on $\ho {\ms  C}$. 
Then the following are equivalent:
\begin{center}
$(1)$ $\sigma$ is full, $(2)$ $d_0^* \sigma$ is full, and $(3)$ $d_1^* \sigma$ is full. 
\end{center}
\end{thm}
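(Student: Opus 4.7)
The plan is to reduce the statement to the equivalence between fullness and the support property, and then invoke the work already done on the support property under $d_{0}^{*}$ and $d_{1}^{*}$.

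First I would note that Lemma \ref{BMb.4} gives, for any locally finite stability condition on a triangulated category with finite-rank Grothendieck group, the equivalence between fullness and the support property. Since $\sigma$ is reasonable, it is locally finite, and since $d_{0}^{*}\sigma$ and $d_{1}^{*}\sigma$ lie in $\Stabr{\ho{\ms C^{\Delta^{1}}}}$ by Proposition \ref{pr:pullback}, they are also locally finite. Thus for all three stability conditions involved, fullness is equivalent to the support property.

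Next I would apply Proposition \ref{pr:support}, which asserts precisely that $\sigma$ satisfies the support property if and only if $d_{0}^{*}\sigma$ does, and likewise for $d_{1}^{*}\sigma$. Combining these two equivalences gives the chain
\[
\sigma \text{ full} \;\Longleftrightarrow\; \sigma \text{ has the support property} \;\Longleftrightarrow\; d_{i}^{*}\sigma \text{ has the support property} \;\Longleftrightarrow\; d_{i}^{*}\sigma \text{ full}
\]
for $i \in \{0,1\}$, which is the desired equivalence. The ranks of the Grothendieck groups are finite because $K_{0}(\ho{\ms C^{\Delta^{1}}}) \cong K_{0}(\ho{\ms C})^{\oplus 2}$ as already used in the proof of Proposition \ref{pr:support}, so Lemma \ref{BMb.4} does apply on the morphism category.

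There is no real obstacle here: the theorem is essentially a formal repackaging of Proposition \ref{pr:support} through the dictionary provided by Lemma \ref{BMb.4}. The substantive content was already established in Propositions \ref{object-stable}, \ref{pr:semistable}, and \ref{pr:support}, in particular the characterization of $d_{i}^{*}\sigma$-semistable objects in terms of $\sigma$-semistable ones, which was the technical input needed to compare the two norms on Grothendieck groups.
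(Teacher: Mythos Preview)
Your proof is correct and follows exactly the same approach as the paper: apply Lemma \ref{BMb.4} to translate fullness into the support property, then invoke Proposition \ref{pr:support} for the equivalence of support properties under $d_{0}^{*}$ and $d_{1}^{*}$. Your additional remarks justifying the finite-rank hypothesis for Lemma \ref{BMb.4} on $\ho{\ms C^{\Delta^{1}}}$ are a welcome clarification but add nothing new to the logical structure.
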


\begin{proof}
By Proposition \ref{pr:support} both $d_1^* \sigma$ and $d_0^* \sigma$ satisfy the support property if so does $\sigma$. 
By Lemma \ref{BMb.4}, the support property on a stability condition is equivalent to the fullness of it. 
Thus we complete the proof. 
\end{proof}

\subsection{On the images $\Im d_0^*$ and $\Im d_1^*$}
Now we prove that both images $\Im d_0^*$ and $\Im d_1^*$ are closed in $\Stabr {\ho {\ms C^{\Delta^1}}}$ by using the ``inducing" construction due to \cite{MR2524593}. 
Let us recall the construction. 

Let $F \colon \mb D \to \mb D'$ be an exact functor between triangulated categories. 
Assume that $F$ satisfies the following additional condition
\begin{enumerate}
\item[(Ind)] $\Hom_{\mb D'}(F(a) , F(b))=0$ implies $\Hom_{\mb D}(a,b)=0$ for any $a, b\in \mb D$. 
\end{enumerate}

\begin{rmk}
Recall functors  $j_!$ and $ j_*\colon \ho {\ms C} \to \ho {\ms C^{\Delta^1}}$ in (\ref{eq:inverse}). 
Then three functors $s$, $j_!$ and $j_*$ from $\ho  {\ms C}$ to $\ho {\ms C^{\Delta^1}}$ satisfy the condition (Ind) since they are faithful. 
\end{rmk}

Let $\sigma' =(Z', \mca P')\in \Stab {\mb D'}$. 
Define $F^{-1}\sigma'$ by the pair $(Z, \mca P)$ where 
\[
Z = Z'\circ F, \ \mca P(\phi) = \{ x \in \mb D \mid F(x) \in \mca P'(\phi) \}. 
\]
Then one can easily see that the pair $F^{-1} \sigma'$ is a stability condition on $\mb D$ if and only if $F^{-1} \sigma' $ has the Harder-Narasimhan property. 

\begin{lem}[{\cite[Lemmas 2.8 and 2.9]{MR2524593}}]\label{lm:MMS}
Notation is the same as above. 
\begin{enumerate}
\item Then the set 
\[
\Dom F =\{	\sigma ' \in \Stab{ \mb D'} \mid F^{-1}	 \sigma'  \in \Stab {\mb D} \}
\]
is closed in $\Stab {\mb D'}$. 
\item The map $F^{-1} \colon \Dom F \to \Stab {\mb D}$ is continuous. 
\end{enumerate}
\end{lem}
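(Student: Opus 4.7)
The plan is to isolate a single naturality statement for Harder--Narasimhan (HN) factors under the pullback functor $F^{-1}$, from which the continuity assertion $(2)$ follows as an immediate Lipschitz-type bound, and then to reduce the closedness assertion $(1)$ to the verification of the HN property, which I handle by a deformation argument.

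First I would establish the key observation: for any $\sigma' = (Z',\mca P') \in \Dom{F}$, the functor $F$ sends an HN filtration of $E \in \mb D$ with respect to $F^{-1}\sigma'$ to an HN filtration of $F(E)$ with respect to $\sigma'$. This is forced by the definition $\mca P(\phi) = \{E \mid F(E) \in \mca P'(\phi)\}$: semistable factors go to semistable factors of the same phase, and exactness of $F$ preserves the filtration structure. In particular the identifications $\phi^{\pm}_{F^{-1}\sigma'}(E) = \phi^{\pm}_{\sigma'}(F(E))$ and $m_{F^{-1}\sigma'}(E) = m_{\sigma'}(F(E))$ hold. Feeding these into Bridgeland's generalized-metric description of the topology on $\Stab{\mb D}$ and $\Stab{\mb D'}$ yields the bound $d(F^{-1}\sigma_1', F^{-1}\sigma_2') \leq d(\sigma_1', \sigma_2')$ on $\Dom{F}$, proving $(2)$.

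For $(1)$, take a convergent sequence $\sigma_n' \in \Dom{F}$ with limit $\sigma' \in \Stab{\mb D'}$ and set $\sigma := F^{-1}\sigma' = (Z,\mca P)$. The soft slicing axioms transfer routinely using (Ind): the shift compatibility $\mca P(\phi+1) = \mca P(\phi)[1]$ is obvious; the vanishing $\Hom(\mca P(>\phi), \mca P(\leq \phi)) = 0$ pulls back from the corresponding vanishing in $\mb D'$ via (Ind); and if $E \in \mca P(\phi)$ satisfies $Z(E) = 0$, then $F(E) \in \mca P'(\phi)$ has $Z'(F(E)) = 0$, forcing $F(E) = 0$, whence (Ind) applied to $\1_E$ yields $E = 0$. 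What remains is the HN property for $\sigma$, and this is where the main obstacle lies.

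For the HN property, fix a nonzero $E \in \mb D$. For each $n$ the hypothesis supplies an HN filtration of $E$ in $\sigma_n := F^{-1}\sigma_n'$ whose image under $F$ is the HN filtration of $F(E)$ in $\sigma_n'$. Since $\sigma_n' \to \sigma'$ and the object $F(E)$ is fixed, a Bridgeland-style deformation estimate forces the HN data of $F(E)$ in $\sigma_n'$ to agree with that in $\sigma'$ for $n$ large: the number of factors stabilizes, the phases converge to the HN phases in $\sigma'$, and each $F(A_i^n)$ ends up in the appropriate slice of $\sigma'$. Pulling this back through $F$ converts the filtration of $E$ in $\sigma_n$ into an HN filtration of $E$ in $\sigma$. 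The delicate points --- bounding the HN length uniformly in $n$ and ruling out further refinement in the limit --- are controlled by (Ind), which blocks spurious morphisms from appearing after pullback, together with the local finiteness of the $\sigma_n'$ in a neighborhood of $\sigma'$; local finiteness of $\sigma$ itself then descends by the same naturality from that of $\sigma'$.
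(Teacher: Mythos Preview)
The paper itself does not prove this lemma; it is quoted directly from \cite{MR2524593}. So there is nothing in the paper to compare against, and I will assess your argument on its own merits. Your treatment of $(2)$ is correct and standard: once you know $F$ carries HN filtrations to HN filtrations, the identifications $\phi^{\pm}_{F^{-1}\sigma'}(E)=\phi^{\pm}_{\sigma'}(F(E))$ and $m_{F^{-1}\sigma'}(E)=m_{\sigma'}(F(E))$ plug straight into Bridgeland's metric.

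For $(1)$ there is a genuine gap. Your central claim --- that for $n$ large ``the HN data of $F(E)$ in $\sigma_n'$ agrees with that in $\sigma'$: the number of factors stabilizes \ldots\ and each $F(A_i^n)$ ends up in the appropriate slice of $\sigma'$'' --- is false. Take $\mb D'$ the derived category of the $A_2$ quiver, with simples $S_1,S_2$ and a nonsplit extension $0\to S_2\to P\to S_1\to 0$; let $\sigma'$ give both simples phase $1$ and let $\sigma_n'$ give $S_2$ phase $1$, $S_1$ phase $1-\tfrac1n$. Then $P$ is $\sigma'$-semistable (HN length one) but has $\sigma_n'$-HN length two for \emph{every} $n$. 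In general the $\sigma_n'$-HN filtration of a fixed object need not stabilise, and its individual factors need not be $\sigma'$-semistable; the closing appeal to (Ind) and local finiteness does not address this.

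The correct mechanism is \emph{refinement}, and it needs only a single well-chosen $n$, not a limit. Let $B_1,\ldots,B_m$ be the $\sigma'$-HN factors of $F(E)$ with phases $\psi_1>\cdots>\psi_m$, and pick $n$ with $d(\sigma_n',\sigma')<\tfrac12\min_j(\psi_j-\psi_{j+1})$. Then each step $G_j$ of the $\sigma'$-HN filtration satisfies $\phi^-_{\sigma_n'}(G_j)>\psi_j-\epsilon>\psi_{j+1}+\epsilon>\phi^+_{\sigma_n'}(F(E)/G_j)$, so concatenating the $\sigma_n'$-HN filtrations of $G_j$ and $F(E)/G_j$ yields, by uniqueness, the $\sigma_n'$-HN filtration of $F(E)$; in particular each $G_j$ occurs among its steps. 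Since that filtration is the $F$-image of the $\sigma_n$-HN filtration of $E$, each $G_j$ lifts to a step $E_j$ in $\mb D$. The coarsened filtration $0=E_0\subset\cdots\subset E_m=E$ then has $F(E_j/E_{j-1})\cong B_j\in\mca P'(\psi_j)$, hence $E_j/E_{j-1}\in\mca P(\psi_j)$, which is the desired HN filtration of $E$ in $F^{-1}\sigma'$.
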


\begin{rmk}
We note that $F^{-1} \sigma'$ is reasonable if $\sigma \in \Stab {\mb D'}$ is reasonable. 
Hence the restriction of $F^{-1}$ to reasonable stability conditions is well-defined. 
\end{rmk}

By using Lemma \ref{lm:MMS} we show that both images $\Im d_0^* $ and $\Im d_1^*$ are closed. 

\begin{thm}\label{thm:closed}
Let $\ms C$ be a stable infinity category. 
Then 
\begin{enumerate}
\item Images $\Im d_0^*$ and $\Im d_1^* $ are given by 
\begin{align}
\Im d_0^* &=	\{ \sigma \in \Stabr {\ho {\ms C^{\Delta^1}}} \mid 	\sigma \in \Dom s \cap \Dom {j_!},\   s^{-1} \sigma = [1] \cdot j_!^{-1} \sigma		\}	\label{000} \mbox{ and }\\ 
\Im d_1^* &=	\{ \sigma \in \Stabr {\ho {\ms C^{\Delta^1}}} \mid 	\sigma \in \Dom s \cap \Dom {j_*},\   s^{-1} \sigma = [-1] \cdot j_*^{-1} \sigma		\}. 	\notag
\end{align}
\item Both $\Im d_0^*$ and $\Im d_1^*$ are closed. 
\end{enumerate}
\end{thm}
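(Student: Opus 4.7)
The plan is to establish part (1) first; part (2) then follows by a short continuity argument based on Lemma \ref{lm:MMS}. Since the argument for $\Im d_1^*$ is symmetric to that for $\Im d_0^*$, via the SOD $\langle \mb C_{0/},\mb C_s\rangle$ in place of $\langle \mb C_s,\mb C_{/0}\rangle$ (using the opposite-category duality already invoked in the proof of Lemma \ref{lm:SOD}), I will restrict to the $d_0^*$-case.

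For the inclusion $\Im d_0^*\subset\mathrm{RHS}$, take $\sigma=d_0^*\tau$. Combining Proposition \ref{pr:semistable} with the identities $d_0\circ s=\1$, $\cof\circ s=0$, $d_0\circ j_!=0$ and $\cof\circ j_!=[1]$, one computes that $s(x)$ is $\sigma$-semistable of phase $\phi$ if and only if $x$ is $\tau$-semistable of phase $\phi$, and that $j_!(x)$ is $\sigma$-semistable of phase $\phi$ if and only if $x[1]$ is $\tau$-semistable of phase $\phi$. Hence $s^{-1}\sigma=\tau$ and $j_!^{-1}\sigma=[-1]\tau$, so the identity $s^{-1}\sigma=[1]\cdot j_!^{-1}\sigma$ holds. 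For the converse, given $\sigma$ in the right-hand side, I set $\tau:=s^{-1}\sigma$; the assumption then reads $j_!^{-1}\sigma=[-1]\tau$ and the task is to show $\sigma=d_0^*\tau$. Additivity of the central charge on the SOD triangle $j_!(\fib f)\to f\to s(d_0 f)$ of Lemma \ref{lm:SOD} yields
\[
Z_\sigma(f)=Z_\sigma(s(d_0 f))+Z_\sigma(j_!(\fib f))=Z_\tau(d_0 f)+Z_\tau(\cof f)=d_0^*Z_\tau(f),
\]
so the central charges coincide.

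For the slicings, I first show the one-sided containment $d_0^*\mca P(\phi)\subset\mca P_\sigma(\phi)$ for every $\phi$. Let $f\in d_0^*\mca P(\phi)$; by Proposition \ref{pr:semistable} both $d_0 f$ and $\cof f$ are $\tau$-semistable of phase $\phi$, so $s(d_0 f)\in\mca P_\sigma(\phi)$ and, since $\fib f=\cof f[-1]$ is $[-1]\tau$-semistable of phase $\phi$, also $j_!(\fib f)\in\mca P_\sigma(\phi)$. Extension-closedness of $\mca P_\sigma(\phi)$ within a single slice together with the SOD triangle puts $f$ in $\mca P_\sigma(\phi)$. To upgrade this to an equality I apply the standard slicing-coincidence argument: for $f\in\mca P_\sigma(\phi)$, its $d_0^*\tau$-Harder--Narasimhan filtration has semistable quotients $A_i\in d_0^*\mca P(\phi_i)\subset\mca P_\sigma(\phi_i)$ with $\phi_1>\cdots>\phi_n$, and the nonzero canonical maps $A_1\hookrightarrow f$ and $f\twoheadrightarrow A_n$ together with the $\sigma$-semistability of $f$ force $\phi_1\leq\phi\leq\phi_n$, so $n=1$ and $\phi_1=\phi$. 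Hence $f\in d_0^*\mca P(\phi)$, the slicings coincide, and $\sigma=d_0^*\tau$, completing (1).

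Part (2) is then essentially formal. By Lemma \ref{lm:MMS} the subsets $\Dom s$ and $\Dom{j_!}$ are closed in $\Stab{\ho{\ms C^{\Delta^1}}}$ and the restrictions $s^{-1}$ and $j_!^{-1}$ are continuous on them; the shift $\tau\mapsto[1]\tau$ is continuous since autoequivalences act continuously; and $\Stabr{\ho{\ms C}}$ is Hausdorff, so the locus $\{\sigma : s^{-1}\sigma=[1]\cdot j_!^{-1}\sigma\}$ is closed as the preimage of the diagonal under a continuous map. Combined with (1) this shows that both $\Im d_0^*$ and $\Im d_1^*$ are closed. The only step where more than routine bookkeeping is involved is the slicing-equality argument; everything else reduces to the SOD calculus of Lemma \ref{lm:SOD}, the gluing description of $d_0^*\tau$ in Proposition \ref{pr:pullback}, and the semistability criterion of Proposition \ref{pr:semistable}.
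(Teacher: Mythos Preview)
Your proof is correct and follows the same overall architecture as the paper's: verify $\Im d_0^*\subset\text{RHS}$ by computing $s^{-1}(d_0^*\tau)$ and $j_!^{-1}(d_0^*\tau)$, verify the converse by matching central charges via the SOD triangle and then matching slicings, and deduce closedness from Lemma~\ref{lm:MMS} as the preimage of the diagonal.

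The one substantive difference is in how you promote the containment $d_0^*\mca P_\tau(\phi)\subset\mca P_\sigma(\phi)$ to an equality. You run the $d_0^*\tau$-Harder--Narasimhan filtration on an arbitrary $f\in\mca P_\sigma(\phi)$ and use the containment plus $\sigma$-semistability of $f$ to squeeze the phases. The paper instead observes that the containment of slices forces the heart of $d_0^*\tau$ to sit inside the heart of $\sigma$, and then invokes the general fact that one bounded heart contained in another must equal it. Both are standard; your argument is slightly more self-contained (it does not appeal to the bounded-heart lemma), while the paper's is marginally shorter. A small point you leave implicit, which the paper covers by the remark preceding Lemma~\ref{lm:MMS}, is that $\tau:=s^{-1}\sigma$ is automatically reasonable (needed for $d_0^*\tau$ to be defined via Proposition~\ref{pr:pullback}); you may want to say this in one clause.
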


\begin{proof}
Since the proof is similar we only prove the assertion for $d_0^*$. 

Let $\tau$ be in $\Stabr{\ho{\ms C}} $ and put $\sigma=d_{0}^{*}\tau $. 
Take the Harder-Narasimhan filtration of an object $x \in \ho {\ms C}$ with respect to $\tau$:
\[
\HNF{x}{x}{a}
\] 
Then $s(a_i)$ is $d_0^* \tau$-semistable by Lemma \ref{lm:CP}. 
Since the Harder-Narasimhan filtration is unique, the value of the filtration by the functor $s \colon \ho{\ms C} \to \ho{\ms C^{\Delta^1}}$ gives the Harder-Narasimhan filtration of $s(x)$ with respect to $d_0^* \tau$. 
Thus $s^{-1} \sigma$ has the Harder-Narasimhan property and $\sigma \in \Dom s$. 

The same argument for $j_!$ implies that $\Im d_0^*$ is a subset of $\Dom {j_!}$. 
The condition $\   s^{-1} \sigma = [1] \cdot j_!^{-1} \sigma$ is obvious. 
Thus $\Im d_0^*$ is a subset of the right hand side in (\ref{000}). 

Conversely take $\sigma  =(Z, \mca P)$ from the right hand side in (\ref{000}). 
Put $\tau  = (W, \mca Q) = s^{-1} \sigma = [1]j_! ^{-1} \sigma$. 
We wish to prove $d_0^* \tau = \sigma$. 

Any object $f \in \ho {\ms C^{\Delta^1}}$ has the following decomposition:
\begin{equation}
\xymatrix{
j_{!} \circ  \fib (f) \ar[r]		&	f	\ar[r]	&	s \circ d_0(f) \ar[r]	&	j_{!} \circ  \fib (f)[1].  
}\label{111}
\end{equation}
Hence we have $Z(f) = Z\left(j_{!} \circ  \fib (f)	\right) + Z\left( s \circ d_0 (f)\right)$. 
Moreover we see $Z\left( s \circ d_0 (f)\right) = W(d_{0}f)$ since $W$ is the central charge of $s^{-1}\sigma$. 
Similarly we have $ Z\left(j_{!} \circ  \fib (f)	\right)  = W(\fib f[1]) $. 
Thus we see 
\[
Z(f) = Z\left(	j_{!} \circ  \fib (f)	\right) + Z\left( s \circ d_0 (f)\right)  = W(\fib f[1]) + W\left(s \circ d_0(f)\right) = d_0^* W (f). 
\]
Hence $d_0^* \tau$ and $\sigma$ have the same central charge. 

Thus it is enough to show that $d_0^* \tau$ and $\sigma$ have the same heart. 
Take an object $f \in d_0^* \mca Q(\phi)$. 
Then Proposition \ref{object-stable} implies $\cof f$ and $d_0 f$ are both $\tau$-semistable in phase $\phi$. 
Since $\tau $ is $s^{-1} \sigma$, the object $s \circ d_0 (f) $ is in $\mca P(\phi)$. 
Similarly the equality $\tau = [1] j_!^{-1} \sigma$ implies that $j_! ( \cof f)$ is in $\mca P(\phi +1)$. 
In particular we see $j_{!} (  \fib f) \in \mca P(\phi)$.

Thus $s \circ d_0(f)$ and $j_{!} \circ  \fib (f)$ are both $\sigma$-semistable in the phase $\phi$ if $f \in d_0^* \mca Q(\phi)$. 
Hence $f$ is $\sigma$-semistable in phase $\phi$ by (\ref{111}) and we see $d_0^* \mca Q(\phi) \subset \mca P(\phi)$.
Then the heart of the $t$-structure of $d_0^* \tau$ is contained in the heart of $\sigma$. 
Then both hearts should be the same since they are hearts of bounded $t$-structures on $\ho {\ms C^{\Delta^1}}$. 
Thus we prove the assertion (\ref{000}).

Define $\pi \colon \Stabr {\ho {\ms C^{\Delta^1}}} \to \Stabr {\ho {\ms C}} \times \Stabr {\ho {\ms C}}$ by $\pi (\sigma)=\left(s^{-1} \sigma,  [1]\cdot  j_! ^{-1}\sigma \right)$. 
Then $\pi$ is continuous by Lemma \ref{lm:MMS}. 
The right hand side in (\ref{000}) is the inverse image of the diagonal $\Delta_{\mathsf{Stab}}$ by the map $\pi$. 
Since the diagonal is closed, $\pi^{-1}\Delta_{\mathsf{Stab}} = \Im d_0^*$ is closed. 
\end{proof}

\section{An example of $\ms C^{\Delta^1}$}

The aim of this section is the proof of Theorem \ref{thm-main2}. 

\begin{dfn}[Construction of path]\label{dfn:construction}
Put $g_{\theta} \in \widetilde{\mr{GL}}_2(\bb R)$ by 
\begin{equation}\label{eq:rotation}
g_{\theta}=\left(\exp(\sqrt{-1}\pi \theta), f(t)=t+\theta \right); \theta \in [0,1). 
\end{equation}
Suppose that a reasonable stability condition $\sigma =(\mca A, Z)$ on $\ho {\ms C}$ satisfies the following condition: 
\begin{itemize}
\item[(Deg)] The image of $Z \colon K_{0}(\ho{\ms C} ) \to \bb C$ is contained in the subset $\bb R$ of real numbers in $\bb C$. 
\end{itemize}

\begin{enumerate}
\item 
Let $\ho {\ms C^{\Delta^1}} = \< \mb D_{0}^{L}, \mb D_{0}^{R} \>$ be the semiorthogonal decomposition in Lemma \ref{lm:SOD}. 
We define $d_0^*(\theta) \sigma$ by $\gl {\sigma} {[-1]\sigma g_{\theta}} =\gl {\sigma} {\sigma g_{-1+\theta}} $. 
The central charge and the heart  of $d_0^*(\theta)\sigma$ are respectively denoted by $Z_0^{\theta}$ and $ \mca A_0^{\theta}$. 
\item 
Let $\ho {\ms C^{\Delta^1}} = \< \mb D_{1}^{L}, \mb D_{1}^{R} \>$ be the semiorthogonal decomposition in Lemma \ref{lm:SOD}. 
We define $d_1^*(\theta) \sigma$ by $\gl {[1]\sigma} {\sigma g_{\theta}}  $. 
The central charge and the heart  of $d_1^*(\theta)\sigma$ are respectively denoted by $Z_1^{\theta}$ and $ \mca A_1^{\theta}$. 
\end{enumerate}
\end{dfn}

\begin{lem}\label{lm:degenerate}
Let $\sigma$ be a stability condition on $\ho{\ms C}$. 
If $\sigma$ satisfies the condition $(\mr{Deg})$ then any object in the heart of $\sigma$ is $\sigma$-semistable. 
\end{lem}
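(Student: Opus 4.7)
The plan is to observe that the condition (Deg) forces every nonzero object of the heart to have the same phase, whence the defining inequality for semistability becomes trivial.

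First I would recall that for any stability condition $(\mca A,Z)$, the image $Z(E)$ of a nonzero object $E\in\mca A$ lies in the semi-closed upper half plane $\mathbb{H}\cup\mathbb{R}_{<0}$, i.e.\ $\mathrm{Im}\,Z(E)\ge 0$, and when $\mathrm{Im}\,Z(E)=0$ necessarily $\mathrm{Re}\,Z(E)<0$. Under the hypothesis (Deg), $Z$ factors through $\mathbb{R}\subset\mathbb{C}$, so the imaginary part vanishes identically, and therefore $Z(E)\in\mathbb{R}_{<0}$ for every nonzero $E\in\mca A$. Consequently the phase satisfies $\phi(E)=\tfrac{1}{\pi}\arg Z(E)=1$ for every nonzero $E\in\mca A$.

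Next I would verify semistability directly from the definition. Fix a nonzero $E\in\mca A$ and an arbitrary nonzero subobject $A\hookrightarrow E$ in $\mca A$ (equivalently, any short exact sequence $0\to A\to E\to B\to 0$ in $\mca A$). Since $\mca A$ is closed under subobjects and quotients, both $A$ and $B$ lie in $\mca A$, so by the previous paragraph $\phi(A)=\phi(E)=1$. Thus the inequality $\phi(A)\le\phi(E)$ required for $\sigma$-semistability holds with equality, and $E$ is $\sigma$-semistable of phase $1$.

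There is no serious obstacle here; the argument is essentially a tautology once one notes that (Deg) collapses all phases of heart objects to $1$. The only point worth being careful about is the standard sign convention placing $Z(\mca A\setminus\{0\})$ in $\mathbb{H}\cup\mathbb{R}_{<0}$, which ensures we may indeed assign the well-defined phase $1$ (rather than $0$) to such objects.
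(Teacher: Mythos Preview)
Your proof is correct and follows essentially the same approach as the paper: the condition (Deg) forces every nonzero object of the heart to have phase $1$, so semistability is automatic. The paper's own argument is a two-line version of what you wrote.
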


\begin{proof}
The condition (Deg) implies that any object in $\mca A$ has phase $1$. 
Then any object in $\mca A$ is $\sigma$-semistable by the definition of stability conditions. 
\end{proof}

\begin{rmk}
If $\theta=0$, then $d_0^*(0) \sigma$ and $d_1^*(0) \sigma$ are the same as respectively $d_0^* \sigma$ and $d_1^* \sigma$. 

The condition (Deg) is necessary since it guarantees the conditions (1) and (2) in Proposition \ref{CP2.2}. 
We note that these conditions fail if $\theta  = 1$. 
\end{rmk}

\begin{lem}\label{lm:semistable}
Suppose that a reasonable stability condition $\sigma =(\mca A, Z) \in \Stab {\ho {\ms C}}$ satisfies the condition {\rm (Deg)}. 
\begin{enumerate}
\item 
Then any object $j_{*}(x)=[0 \to x] $ for $x \in \mca A$ is $d_0^*(\theta)\sigma$-semistable for any $\theta \in [0,1)$. 
\item 
Then any object $j_{!}(x)=[x \to 0] $ for $x \in \mca A$ is $d_1^*(\theta)\sigma$-semistable for any $\theta \in [0,1)$. 
\end{enumerate}
\end{lem}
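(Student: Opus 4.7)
I will prove part (1); part (2) follows by applying the same argument to the opposite infinity category $\ms C^{\mr{op}}$ (which interchanges $d_0 \leftrightarrow d_1$, $\fib \leftrightarrow \cof$, and $j_* \leftrightarrow j_!$), as in the proof of Lemma~\ref{lm:SOD}. Under $(\mr{Deg})$, every nonzero $E \in \mca A$ is $\sigma$-semistable of phase $1$ (Lemma~\ref{lm:degenerate} together with reasonableness), so $\mca P(\phi) = 0$ for every $\phi \notin \bb Z$. Hence the heart $\mca P((\theta - 1, \theta])$ of $\sigma g_{-1+\theta}$ collapses to $\mca P(0) = \mca A[-1]$ for all $\theta \in [0,1)$, so the glued heart $\mca A_0^\theta$ is independent of $\theta$ and equals $\{f : d_0 f,\, \cof f \in \mca A\}$. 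Since $d_0 j_*(x) = \cof j_*(x) = x$, we have $j_*(x) \in \mca A_0^\theta$, and the triangle $j_!(x[-1]) \to j_*(x) \to s(x)$ coming from the semiorthogonal decomposition $\<\mb C_s, \mb C_{/0}\>$ yields
\[
Z_0^\theta(j_*(x)) = Z(x)\bigl(1 + e^{-i\pi\theta}\bigr) = 2\cos(\pi\theta/2)\, e^{-i\pi\theta/2}\, Z(x),
\]
so the phase of $j_*(x)$ in $d_0^*(\theta)\sigma$ is $1 - \theta/2$.

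The remaining task is to show $\phi(g) \leq 1 - \theta/2$ for any nonzero monomorphism $g \hookrightarrow j_*(x)$ in $\mca A_0^\theta$. Writing $a = -Z(d_0 g) \geq 0$ and $c = -Z(\cof g) \geq 0$ (both real under $(\mr{Deg})$), the same decomposition yields $Z_0^\theta(g) = -a - c\, e^{-i\pi\theta}$, and a short trigonometric manipulation reduces the inequality $\phi(g) \leq 1 - \theta/2$ to the purely real statement $a \leq c$. The problem thus becomes: produce a monomorphism $d_0 g \hookrightarrow \cof g$ in $\mca A$.

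This last step is where I expect the real work. For $c = 0$, the equality $\cof g = 0$ forces $g \cong s(d_0 g)$ in $\ho{\ms C^{\Delta^1}}$, and the adjunction $s \dashv d_1$ gives $\Hom(s(d_0 g), j_*(x)) = \Hom(d_0 g, d_1 j_*(x)) = 0$, contradicting $g \neq 0$. For $c > 0$, I apply the exact functors $d_0$ and $\cof$ to the short exact sequence $0 \to g \to j_*(x) \to h \to 0$ in $\mca A_0^\theta$ and obtain that $d_0 g \to x$ and $\cof g \to x$ are monomorphisms in $\mca A$; functoriality of $\cof$ for the morphism $g \to j_*(x)$ (using $d_1 j_*(x) = 0$) produces the factorization $d_0 g \to \cof g \to x$ of the first monomorphism. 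The long exact cohomology sequence of the triangle $d_1 g \to d_0 g \to \cof g$ identifies the kernel of $d_0 g \to \cof g$ with $H^0(d_1 g)$; by the commutativity of the factorization, this kernel both injects into $x$ and maps there as zero, forcing $H^0(d_1 g) = 0$. Hence $d_0 g \hookrightarrow \cof g$ in $\mca A$, and under $(\mr{Deg})$ this is precisely $a \leq c$, completing the proof of (1).
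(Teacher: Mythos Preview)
Your proof is correct and follows essentially the same strategy as the paper: both reduce semistability of $j_*(x)$ to the inequality $-Z(d_0 g)\leq -Z(\cof g)$ for an arbitrary subobject $g\hookrightarrow j_*(x)$ in the constant heart $\mca A_0^\theta$, and then verify the phase inequality by the same trigonometric computation (the paper phrases it as $\Im\bigl(Z_0^\theta(j_*(x))\,\overline{Z_0^\theta(g)}\bigr)\geq 0$, which unwinds to your $a\leq c$). The only real difference is how the inclusion $d_0 g\hookrightarrow\cof g$ in $\mca A$ is obtained: the paper invokes the $3\times 3$ lemma to characterize subobjects of $j_*(x)$ as those with $d_0 g\subset\cof g\subset x$, whereas you use the natural transformation $d_0\Rightarrow\cof$ to factor $d_0 g\hookrightarrow x$ through $\cof g$ and then kill the kernel $H^0(d_1 g)$ via the factorization---a slightly more functorial but equivalent argument.
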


\begin{proof}
Since the proof is similar, we only prove the first assertion. 

We first note that the heart $\mca A_0^{\theta}$ is constant for any $\theta \in [0,1)$, 
since the stability condition $\sigma=(\mca A, Z)$ on $\ho {\ms C}$ satisfies the condition {\rm (Deg)}.  
If $ x \in \mca A$, Lemma \ref{lm:4.6} implies that the object $j_{*}(x)\in \ms C^{\Delta^1}$ is in $\mca A_{0}^{\theta}$ since $\mca A_{0}^{\theta}$ is constant. 

We can see that an object $[g \colon y \to z] \in \mca A_0^{\theta}$ is a subobject of $j_{*}(x)$ 
if and only if  $z \subset \cof g \subset x \in \mca A$ by $3 \times 3$ lemma and Lemma \ref{lm:difference}. 
Moreover $y$ is given by $(\cof g/z)[-1]$ by if $g \subset  j_{*}(x)$. 
The semiorthogonal decomposition $\ho {\ms C^{\Delta^1}}  = \< \mb D_{0}^{L}, \mb D_{0}^{R}  \>$ implies the following exact sequence: 
\[
\xymatrix{
\cof g [-1]	\ar[r]\ar[d]_h	&	(\cof g /z)[-1]\ar[r]\ar[d]_g	&	z\ar[d]_{\1}	\\
0	\ar[r]	&	z	\ar[r]	&	z\\
}
\]

Put $Z(z)=-\beta, Z(x) = - \alpha , Z(\cof g)=- \gamma $ where $\alpha, \beta$ and $\gamma $ are positive real numbers. 
Then we have 
\begin{align*}
Z_0^{\theta}(g) &= Z_0^{\theta}(h) + Z_0^{\theta}(\1 _z)= -\beta  - \gamma \cdot \exp(-\pi \sqrt{-1}\theta) 	\\
Z_0^{\theta}(j_{*}(x)) &= Z_0^{\theta}(\1_x) + Z_0^{\theta}(x[-1] \to 0 ) = -\alpha -\alpha \exp(-\pi \sqrt{-1}\theta). 
\end{align*}
Since $z \subset \cof g \subset x$, we have $\beta \leq \gamma \leq \alpha$ which simply implies the inequality 
$\arg Z_0^{\theta}(g) \leq  \arg Z_0^{\theta}(j_{*}(x) )$ as follows.  
In fact it is enough to show that 
the imaginary part of $Z_{0}^{\theta}(j_{*}(x))  \overline{Z_{0}^{\theta}(g)}$ is non-negative where $\overline{Z_{0}^{\theta}(g)}$ is a complex conjugate of $Z_{0}^{\theta}(g)$. 
Since the imaginary part is given by $\alpha(\gamma -\beta )\sin(\pi \theta)$, we finish the proof. 
\end{proof}

\begin{lem}\label{lm:classification}
Let $d_0^* (\theta) \sigma$ and $d_1^*(\theta)\sigma$ be the stability conditions on $\ho{\ms C^{\Delta^1}}$ defined in Definition \ref{dfn:construction}. 
Assume $\theta \neq 0$. 
\begin{enumerate}
\item 
Any object $[f \colon x \to y] \in \ho {\ms C^{\Delta^1}}$ in the heart $\mca A_0^{\theta}$ has the Harder-Narasimhan filtration as follows:
\begin{equation}\label{eq:HNfiltration}
\xymatrix{
[\ker \delta  \stackrel{\1}{\to} \ker \delta ]	\ar[r]	&	[\ker \delta \to y]		\ar[r]\ar[d]	 & [x \stackrel{f}{\to} y] \ar[d]	\\
		& [0 \to \im \delta ]	\ar@{-->}[lu] 	&	[\cok \delta [-1] \to 0],  \ar@{-->}[ul]\\
}
\end{equation}
where $\im \delta  , \ker \delta , \cok \delta $ are respectively the image, the kernel and the cokernel of the morphism $[\delta \colon y \to \cof f ]$. 
\item 
Any object $[f \colon x \to y] \in \ho {\ms C^{\Delta^1}}$ in the heart $\mca A_1^{\theta}$ has the Harder-Narasimhan filtration as follows:
\[
\xymatrix{
\left[0 \to  \ker \epsilon[1] \right]	\ar[r]	&	\left[\im \epsilon \to \ker \epsilon[1] \right]		\ar[r]\ar[d]	 & [x \stackrel{f}{\to} y] \ar[d]	\\
		& [\im \epsilon \to 0 ]	\ar@{-->}[lu] 	&	[\cok \epsilon  \stackrel{\1}{\to} \cok \epsilon],  \ar@{-->}[ul]\\
}
\]
where $\im \epsilon  , \ker \epsilon , \cok \epsilon $ are respectively the image, the kernel and the cokernel of the morphism $[\epsilon \colon \fib f \to x]$. 
\end{enumerate}
\end{lem}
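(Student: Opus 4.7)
The plan is to prove part (1) by constructing the claimed filtration explicitly and verifying the Harder--Narasimhan axioms; part (2) will follow by the dual argument in the opposite infinity category $\ms C^{\mathrm{op}}$, in the spirit of the end of the proof of Lemma \ref{lm:SOD}. For part (1), I first analyze the heart. The proof of Lemma \ref{lm:semistable} shows that under condition (Deg) the heart $\mca A_0^\theta$ is independent of $\theta$, so by Lemma \ref{lm:4.6} any $f=[x\to y]\in \mca A_0^\theta$ has $y,\cof f\in \mca A$. Consequently, the natural morphism $\delta\colon y\to \cof f$ in the triangle $x\xrightarrow{f} y\xrightarrow{\delta} \cof f\to x[1]$ is a morphism in the abelian category $\mca A$, admitting $\ker\delta,\im\delta,\cok\delta\in \mca A$. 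Applying the octahedral axiom to the factorization $y\twoheadrightarrow \im\delta\hookrightarrow \cof f$ of $\delta$, together with $x=\fib\delta$, yields a distinguished triangle
\[
\ker\delta\longrightarrow x\longrightarrow \cok\delta[-1]\longrightarrow \ker\delta[1]
\]
in $\ho{\ms C}$; this provides the morphism $\ker\delta\to x$ needed to glue the filtration maps below, and identifies $\cof(\ker\delta\to x)=\cok\delta[-1]$.

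Next I would set up the filtration
\[
E_1=s(\ker\delta)=[\ker\delta\xrightarrow{\1}\ker\delta],\qquad E_2=[\ker\delta\hookrightarrow y],\qquad E_3=f,
\]
and check $E_i\in \mca A_0^\theta$ via Lemma \ref{lm:4.6}, since the $d_0$-components are $\ker\delta,y,y\in \mca A$ and the cofibers are $0,\im\delta,\cof f\in \mca A$. The morphism $E_1\to E_2$, identity on $d_1$ and the inclusion on $d_0$, has termwise cofiber $[0\to y/\ker\delta]=j_*(\im\delta)$, while the morphism $E_2\to E_3$, given by the octahedral lift $\ker\delta\to x$ on $d_1$ and the identity on $d_0$, has termwise cofiber $[\cok\delta[-1]\to 0]=j_!(\cok\delta[-1])$ by the triangle above. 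That these diagrams actually upgrade to morphisms of $\ho{\ms C^{\Delta^1}}$ reduces via Lemma \ref{lm:difference} to commutativity of the relevant squares in $\ho{\ms C}$, which is built into the octahedral construction.

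To conclude, I would verify semistability and strict monotonicity of phases. Under (Deg) every object of $\mca A$ is $\sigma$-semistable, so Lemma \ref{lm:CP} gives that $E_1=s(\ker\delta)$ is $d_0^*(\theta)\sigma$-semistable of phase $1$; Lemma \ref{lm:semistable} gives that $E_2/E_1=j_*(\im\delta)$ is semistable of phase $1-\theta/2$, via the central-charge computation $Z_0^\theta(j_*(z))=-|Z(z)|(1+e^{-\sqrt{-1}\pi\theta})$ recorded in the proof of Lemma \ref{lm:semistable}; and Lemma \ref{lm:CP} gives that $E_3/E_2=j_!(\cok\delta[-1])$ is semistable of phase $1-\theta$, since $\cok\delta[-1]\in \mca A[-1]$ is $\sigma g_{-1+\theta}$-semistable. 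For $\theta\in (0,1)$ the inequalities $1>1-\theta/2>1-\theta$ hold strictly, so the Harder--Narasimhan property holds.

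The main obstacle I anticipate is the identification $\cof(E_2\to E_3)=j_!(\cok\delta[-1])$; this is precisely where the octahedral triangle $\ker\delta\to x\to \cok\delta[-1]$ is used, so once that triangle is in hand the verification is essentially formal. Part (2) is obtained by applying part (1) to $\ms C^{\mathrm{op}}$: this exchanges $d_0\leftrightarrow d_1$, $\cof\leftrightarrow \fib$, and $j_!\leftrightarrow j_*$, and swaps the semiorthogonal decompositions $\<\mb C_s,\mb C_{/0}\>\leftrightarrow \<\mb C_{0/},\mb C_s\>$, converting the filtration built from $\delta\colon y\to \cof f$ into the claimed filtration built from $\epsilon\colon \fib f\to x$.
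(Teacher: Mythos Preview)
Your proposal is correct and follows essentially the same route as the paper's proof: both establish the triangle $\ker\delta\to x\to \cok\delta[-1]$, build the three-step filtration $s(\ker\delta)\subset[\ker\delta\to y]\subset f$ using Lemma \ref{lm:difference} to lift the commutative squares, and then invoke Lemma \ref{lm:semistable} (together with the constancy of the heart under (Deg)) to verify semistability and the phase ordering. Your write-up is a bit more explicit than the paper's—you spell out the octahedral step and compute the middle phase as $1-\theta/2$ rather than merely bounding it between $1-\theta$ and $1$—but there is no substantive difference in strategy, and your handling of part (2) by passing to $\ms C^{\mathrm{op}}$ matches the paper's ``similar'' remark.
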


\begin{proof}
We only prove the first assertion by the same reason in Lemma \ref{lm:semistable}. 

There is a distinguished triangle in $\ho {\ms C}$:
\[
\xymatrix{
\ker \delta	\ar[r]	&	x	\ar[r]	& \cok \delta[-1]	
}
\]

The construction of $d_0^*(\theta) \sigma$ implies that $\1_{\ker \delta }$ is semistable in $d_0^* (\theta) \sigma$ with phase $1$ 
and that $[\cok \delta [-1] \to 0]$ is $d_0^* (\theta) \sigma$-semistable in phase $1-\theta$ by Lemma \ref{lm:degenerate}. 

The following diagram and Lemma \ref{lm:difference}
\[
\xymatrix{
\ker \delta \ar[r]\ar[d]_{f_1}	& x \ar[r]\ar[d]_f	&	\cok \delta[-1]	\ar[d]_{\alpha_2}	\\
y \ar[r]	&		y	\ar[r]	&	0 	\\
}
\]
implies that $\alpha _2$ is a quotient of $f$ by Lemma \ref{lm:4.6}, since $\mca A_{0}^{\theta}$ is constant for $\theta$. 

Similarly, by the following diagram,  
\[
\xymatrix{
\ker \delta \ar[r]\ar[d]_{\1}	&	\ker \delta	\ar[r]\ar[d]_{f_1}	&	0\ar[d]_{\alpha _1}	\\
\ker \delta	\ar[r]	&	y	\ar[r]	&	\im \delta		\\
}
\]
we see that $\1 _{\ker }$ is a subobject of $f_1$ by Lemma \ref{lm:4.6}.
Thus we obtain a filtration denoted in (\ref{eq:HNfiltration}). 
By Lemma \ref{lm:semistable}, the morphism $[0 \to \im \delta]$ is $d_0^*(\theta) \sigma$-semistable. 
If $\theta \neq 0$ then the phase of $[ 0 \to \im \delta]$ is smaller than $1$ and is bigger than $1-\theta$. 
Thus the filtration in (\ref{eq:HNfiltration}) gives an HN filtration of $f$. 
\end{proof}

\begin{cor}\label{cr:torsionpiar}
Let $d_0^* (\theta) \sigma$ be the stability condition constructed in Definition \ref{dfn:construction}. 
Suppose that $\theta \neq 0$. 
\begin{enumerate}
\item Any semistable object in $d_0^* (\theta)\sigma$ is one of the following:
\[
\left\{
[x	\stackrel{\1}{\to}	x], 
\left[y[-1]	\to 	0\right], 
[0	\to	z]
 \middle|
 x,y,z\in \mca A\mbox{ where $\sigma=(\mca A, Z)$}
\right\}
\]
\item 
The pair $(\mca T, \mca F)$ gives a torsion pair on the heart $\mca A_0^{\theta}$ where $\mca T$ and $\mca F$ are respectively 
\[
\mca T =	\left\{ 
[x	\stackrel{\1}{\to} 	x]  \in \mca A_0^{\theta}	\middle|	 x\in \mca A	 \right\} \mbox{ and }
\mca F = \left\{ 
\left [y \to z  \right]
\in \mca A_0^{\theta}	\middle|  y \in \mca A[-1], z \in \mca A		\right\}	. 
\]
\end{enumerate}
\end{cor}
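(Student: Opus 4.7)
The plan is to derive both parts of the corollary directly from the three-step Harder--Narasimhan filtration supplied by Lemma \ref{lm:classification}, combined with the phase calculations from the proof of Lemma \ref{lm:semistable}. Since semistable objects are shifts of semistable objects in the heart, I will work throughout with $f = [x \to y] \in \mca A_0^\theta$.

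For part (1), the key observation is that the three graded pieces $s(\ker\delta)$, $j_*(\im\delta)$, and $j_!(\cok\delta[-1])$ of the HN filtration have phases $1$, $1 - \theta/2$, and $1 - \theta$ respectively. The first and third phases are immediate from Definition \ref{dfn:construction}, while the middle phase follows from the identity
\[
Z_0^\theta(j_*(x)) = -|Z(x)|(1 + e^{-\sqrt{-1}\pi\theta}) = -2|Z(x)| e^{-\sqrt{-1}\pi\theta/2} \cos(\pi\theta/2)
\]
used in the proof of Lemma \ref{lm:semistable}. Since $\theta \in (0,1)$, these three phases are pairwise distinct, so $d_0^*(\theta)\sigma$-semistability of $f$ forces at most one graded piece to be nonzero. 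A short case analysis then yields the three listed types: if only $s(\ker\delta)$ survives then $\delta = 0$ and $\cof f = 0$, so $f \cong [x \stackrel{\1}{\to} x]$; if only $j_*(\im\delta)$ survives then $\delta$ is an isomorphism $y \to \cof f$, forcing $x = 0$ and $f \cong [0 \to y]$; and if only $j_!(\cok\delta[-1])$ survives then $y = 0$, and $x[1] \cong \cok\delta \in \mca A$ gives $f \cong [w[-1] \to 0]$ with $w = \cok\delta$.

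For part (2), I will verify the two axioms of a torsion pair. The orthogonality $\Hom_{\mca A_0^\theta}(T, F) = 0$ reduces via the adjunction $s \dashv d_1$ to $\Hom_{\ho{\ms C}}(x, y) = 0$ for $x \in \mca A$ and $y \in \mca A[-1]$, which holds because negative Ext groups in the heart of a bounded $t$-structure vanish. For the torsion/torsion-free decomposition of a given $f \in \mca A_0^\theta$, I will set $T := s(\ker\delta) \in \mca T$. The first two steps of the HN filtration (\ref{eq:HNfiltration}) in Lemma \ref{lm:classification} exhibit $T$ as a subobject of $f$ in $\mca A_0^\theta$, and its cokernel $f/T$ is an extension of $j_!(\cok\delta[-1])$ by $j_*(\im\delta)$ inside $\mca A_0^\theta$. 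Reading off the source and target in this extension gives $f/T \cong [\cok\delta[-1] \to \im\delta]$, which lies in $\mca F$ since $\cok\delta[-1] \in \mca A[-1]$ and $\im\delta \in \mca A$.

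The main technical step is identifying $f/T$ as $[\cok\delta[-1] \to \im\delta]$: in the triangle $j_*(\im\delta) \to f/T \to j_!(\cok\delta[-1])$ in $\ho{\ms C^{\Delta^1}}$, applying $d_1$ and $d_0$ componentwise reduces to computing the middle terms of the triangles $0 \to d_1(f/T) \to \cok\delta[-1]$ and $\im\delta \to d_0(f/T) \to 0$ in $\ho{\ms C}$, yielding the claimed source and target. Beyond this routine diagram chase, the corollary is essentially a bookkeeping consequence of Lemma \ref{lm:classification}, and I do not anticipate any substantial obstacle.
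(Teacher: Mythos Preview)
Your proposal is correct and follows essentially the same route as the paper's proof: both read off part~(1) from the HN filtration of Lemma~\ref{lm:classification} (the paper simply calls this ``obvious''), and for part~(2) both take $T = s(\ker\delta)$, identify the quotient $f/T$ as $[\cok\delta[-1] \to \im\delta]$ by applying $d_0$ and $d_1$, and obtain orthogonality from $s \dashv d_1$. Your computation of the middle phase as exactly $1-\theta/2$ is slightly more precise than the paper's ``between $1-\theta$ and $1$'', but this extra precision is not needed for the argument.
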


\begin{proof}
The first assertion is obvious from Lemma \ref{lm:classification}. 
For the second assertion, 
take any morphism $[f \colon x \to y ] \in \mca A_0^{\theta}$ and let $\delta$ be the morphism $y \to \cof f$. 
Since $\1_{\ker \delta}$ is a subobject of $f$, we denote by $e$ the quotient $f/\1_{\ker \delta}$. 
Then $d_{0}e$ is isomorphic to $\im \delta$ and $d_{1}e$ is isomorphic to $\cok \delta[-1]$. 
Thus $e$ is given by $[e \colon \cok \delta[-1] \to \im \delta]$. 
Then the object $[e \colon \cok \delta[-1] \to \im \delta]$ clearly is in $\mca F$. 
The adjunction $ s \dashv d_1$ implies $\Hom_{\ho{ \ms C^{\Delta^1}}}(\mca T, \mca F)=0$. 
Thus we conclude the proof. 
\end{proof}

\begin{thm}\label{thm:P^1}
Suppose that an infinity category $\ms C$ is the infinity category $\ms D_{\mr{coh}}^{b}(\bb P^1) $ of $\bb P^1$ (cf. Definition \ref{dfn:derivedmugen}). 
Then two distinct images $d_1^*, d_0^* \colon \Stab (\ho{ \ms C}) \to \Stab (\ho {\ms C^{\Delta^1}})$ are disjoint but path connected to each other. 
\end{thm}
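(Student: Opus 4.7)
Disjointness of $\Im d_{0}^{*}$ and $\Im d_{1}^{*}$ is immediate from Theorem \ref{thm-main1}(\ref{main1-2}) specialised to $\ms C = \ms D_{\mr{coh}}^{b}(\bb P^{1})$. Since $\Stabr{\ho{\ms C}}$ is path-connected for $\bb P^{1}$ (indeed contractible, by \cite{MR2335991}) and both $d_{0}^{*}, d_{1}^{*}$ are continuous by Proposition \ref{pr:pullback}, the two images $\Im d_{0}^{*}$ and $\Im d_{1}^{*}$ are themselves path-connected. Thus it suffices to exhibit one continuous path in $\Stabr{\ho{\ms C^{\Delta^{1}}}}$ joining some point of $\Im d_{0}^{*}$ to some point of $\Im d_{1}^{*}$.

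The key idea is to use an algebraic stability condition available only on $\bb P^{1}$: the exceptional pair $(\mca O(-1)[1], \mca O)$ generates by extensions a heart $\mca A \subset \ho{\ms C}$ equivalent to the category of finite dimensional representations of the Kronecker quiver, and the assignment $Z(\mca O) = Z(\mca O(-1)[1]) = -1$ gives a reasonable stability condition $\sigma = (\mca A, Z)$ whose image of $Z$ is contained in $\bb R$. Hence $\sigma$ satisfies the degeneracy condition $(\mr{Deg})$ of Definition \ref{dfn:construction}. No stability condition with this property exists on a curve of positive genus, which is why the $\bb P^{1}$ hypothesis cannot be relaxed by this approach.

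With $\sigma$ fixed, Definition \ref{dfn:construction} supplies two continuous one-parameter families $\theta \mapsto d_{0}^{*}(\theta)\sigma$ and $\theta \mapsto d_{1}^{*}(\theta)\sigma$ in $\Stabr{\ho{\ms C^{\Delta^{1}}}}$ for $\theta \in [0,1)$, with $d_{i}^{*}(0)\sigma = d_{i}^{*}\sigma$ for $i \in \{0,1\}$. By Corollary \ref{cr:torsionpiar} and its evident analogue for $d_{1}^{*}$, the heart $\mca A_{0}^{\theta}$ stabilises for $\theta\in(0,1)$ to a fixed heart $\mca A_{0}$ carrying the explicit torsion pair $(\mca T, \mca F)$ of that corollary, and similarly $\mca A_{1}^{\theta}$ stabilises to a fixed heart $\mca A_{1}$. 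I would identify $\mca A_{0}$ and $\mca A_{1}$ as two tilts of a common heart $\mca H$ in $\ho{\ms C^{\Delta^{1}}}$, using the two semiorthogonal decompositions of Lemma \ref{lm:SOD} to write down each in terms of simples of $\mca A$ placed along $\mb C_{s}$, $\mb C_{/0}$ and $\mb C_{0/}$; the intersection of the respective torsion pairs picks out $\mca H$ together with an explicit finite list of simples (four in all, coming from the two Kronecker simples and their counterparts in the complementary component).

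Having obtained $\mca H$, the central charges of $d_{0}^{*}(\theta_{0})\sigma$ and $d_{1}^{*}(\theta_{1})\sigma$ for suitable $\theta_{0},\theta_{1}\in(0,1)$ are represented by points of the open set $\{Z\colon K_{0}(\mca H) \to \bb C \mid Z(\text{each simple}) \in \bb H_{+}\}$, which is convex. A straight-line interpolation of central charges within this set, together with the fixed heart $\mca H$, provides a continuous path in $\Stabr{\ho{\ms C^{\Delta^{1}}}}$; fullness along the interpolation follows from Theorem \ref{thm:full} and the support property at the endpoints. Concatenating the three pieces --- the portion of $\theta \mapsto d_{0}^{*}(\theta)\sigma$ on $[0,\theta_{0}]$, the central-charge interpolation through $\mca H$, and the portion of $\theta \mapsto d_{1}^{*}(\theta)\sigma$ on $[0,\theta_{1}]$ traversed in reverse --- yields the required path from $d_{0}^{*}\sigma$ to $d_{1}^{*}\sigma$.

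The main obstacle will be the explicit identification of the common tilted heart $\mca H$ and the verification that the straight-line interpolation remains a stability condition. Both hinge on a careful bookkeeping of the four simples of $\mca H$ in terms of $(\mca O(-1)[1], \mca O)$ and the functors $s, j_{!}, j_{*}$; once written down, the preservation of $\Stabr$ along the interpolation reduces to a finite condition on a small number of complex numbers in the upper half-plane.
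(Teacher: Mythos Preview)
Your setup matches the paper: disjointness via Proposition~\ref{pr:fundamental}, reduction to a single path via connectedness of $\Stab{\ho{\ms C}}$, the choice of an algebraic $\sigma$ satisfying $(\mr{Deg})$ coming from the Kronecker heart, and the use of the families $d_{0}^{*}(\theta)\sigma$ and $d_{1}^{*}(\theta)\sigma$ from Definition~\ref{dfn:construction}. The divergence is in the middle segment.

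Your plan is to manufacture a common heart $\mca H$, realise both $d_{0}^{*}(\theta_{0})\sigma$ and $d_{1}^{*}(\theta_{1})\sigma$ as points of the convex chamber $\{Z : Z(\text{simple})\in\bb H_{+}\}$ attached to $\mca H$, and then interpolate linearly. As written this has a gap: the heart of $d_{i}^{*}(\theta)\sigma$ is the fixed $\mca A_{i}$ for all $\theta\in[0,1)$ (by the proof of Lemma~\ref{lm:semistable}), and $\mca A_{0}\neq\mca A_{1}$, so neither endpoint is a stability condition with heart $\mca H$. You would still need continuous paths from each $d_{i}^{*}(\theta_{i})\sigma$ to something with heart $\mca H$ before any straight-line interpolation makes sense; this is exactly the step you flag as the ``main obstacle'' and leave undone. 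Moreover, checking that $\mca H$ is of finite length with a short list of simples, and that the chosen central charges land in the open chamber, is nontrivial bookkeeping that you have not carried out.

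The paper avoids all of this. It takes $\theta_{0}=\theta_{1}=2/3$ and uses the $\widetilde{\mr{GL}}_{2}^{+}(\bb R)$-action rather than a heart interpolation: one applies the rotation $g_{2/3}$ of~\eqref{eq:rotation} to $d_{0}^{*}(2/3)\sigma$ and checks, by reading off the lists of semistable objects and their phases furnished by Corollary~\ref{cr:torsionpiar} and Lemma~\ref{lm:classification}(2), that the result is \emph{equal} to $d_{1}^{*}(2/3)\sigma$. Concretely, in $d_{0}^{*}(2/3)\sigma$ the objects $s(x)$, $j_{*}(z)$, $j_{!}(y[-1])$ (for $x,y,z\in\mca A$) are semistable of phases $1$, $2/3$, $1/3$; rotating by $g_{2/3}$ shifts these to $1/3$, $0$, $-1/3$, so in the new heart $\mca P_{0}^{2/3}\bigl((2/3,5/3]\bigr)$ the semistable objects are $j_{*}(z[1])$, $j_{!}(y)$, $s(x)$ in phases $1$, $2/3$, $1/3$---which is exactly the description of $d_{1}^{*}(2/3)\sigma$. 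Thus the required path is
\[
d_{0}^{*}\sigma \;\xrightarrow{\ \theta\,\in\,[0,2/3]\ }\; d_{0}^{*}(2/3)\sigma \;\xrightarrow{\ \cdot\,g_{\theta},\ \theta\,\in\,[0,2/3]\ }\; d_{1}^{*}(2/3)\sigma \;\xrightarrow{\ \theta\,\in\,[0,2/3]\ }\; d_{1}^{*}\sigma,
\]
with the middle piece supplied by the continuous right $\widetilde{\mr{GL}}_{2}^{+}(\bb R)$-action. No auxiliary heart $\mca H$, no convex interpolation, and no simple-object bookkeeping are needed. If you want to salvage your approach, note that the rotation is precisely what moves $\mca A_{0}$ to $\mca A_{1}$; your hypothetical $\mca H$ can be taken to be $\mca A_{1}$ itself, and the ``tilt'' you seek is realised by $g_{2/3}$.
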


\begin{proof}
Let $Q$ be a quiver denoted by $\xymatrix{\mb v_1 \ar@<0.5ex>[r] \ar@<-0.5ex>[r]  & \mb v_2 } $ and let $R$ be the path algebra of $Q$ over $\mb k$. 
The algebra $R$ is described by exceptional collections $\{ \mca O, \mca O(1) \}$ in $\ho{\ms C}$; $R \cong \mr{End}_{\bb P^1} (\mca O \+ \mca O(1))$. 
The derived category $\ho{\ms C}$ is equivalent to the bounded derived category $\mb D^{b} (\mb{mod}\, R)$ of $R$ with the functor 
\[
\bb R\Hom _{\bb P^1}(\mca O \+ \mca O(1), - ) \colon \ho{\ms C} \to \mb D^{b} (\mb{mod}\, R)
\] 

Take a stability condition $\sigma=(\mca A , Z)$ on $\ho{\ms C}$ such that any simple object in the abelian category $\mb{mod}\, R$ is stable in phase $1$ \footnote{Equivalently take a stability condition $\sigma=(\mca A , Z)$ on $\mb D^{b}(\mb{coh}\, \bb P^1)$ such that $\mca O$ and $\mca O(-1)[1]$ are both $\sigma$-stable with $
Z(\mca O)=Z(\mca O(-1)[1])=-1. $
}. 
Then $\sigma$ satisfies the degenerate condition {\rm (Deg)} in Definition \ref{dfn:construction}. 
Moreover $\Stab {\ho{\ms C}}$ is connected by \cite{MR2335991} or \cite{MR2219846}. 
Thus it is enough to show that $d_0^* \sigma$ and $d_1^* \sigma$ are path connected.

\textbf{Step 1}. 
By Definition \ref{dfn:construction}, there is a collection $\{ d_0^* (\theta)\sigma \}_{0 \leq \theta \leq 2/3}$ of stability conditions on $\ho{\ms C^{\Delta^{1}}}$. 
Since $\sigma$ is reasonable, 
the collection $\{ d_0^* (\theta)\sigma \}_{0 \leq \theta \leq 2/3}$ is a continuous family in $\Stab {\ho{\ms C^{\Delta^{1}}}}$. 
Thus the collection is a path in $\Stab {\ho{\ms C^{\Delta^{1}}} }$

\textbf{Step 2}. 
Let $x,y$ and $z$ are in the heart $\mca A$ and set $\theta=2/3$. 
By the construction of $d_0^* (\theta)\sigma$, objects $[\1 \colon x \to x]$ and $\left[y[-1] \to 0\right]$ are  $d_0^*(2/3) \sigma$-semistable in phase respectively $1$ and $1/3$. 
By Lemma \ref{lm:semistable}, the object $[0 \to z] \in \mca A_0^{2/3}$ is semistable in phase $2/3$. 
Moreover 
the torsion pair $\pair{\mca P_0^{2/3}\left ((2/3,  1] \right)}{\mca P_0^{2/3}\left((0,2/3]  \right)}$ is just $\pair{\mca P_0^{2/3} (1)}{\mca P_0^{2/3}([1/3, 2/3])}$. 
Recall $g_{2/3} \in \widetilde{\mr{GL}}_2(\bb R) $ defined in (\ref{eq:rotation}). 
We denote $(d_0^* (2/3) \sigma ) \cdot g_{2/3}$  by $\tau$. 
Then the heart $\mca B$ of $\tau$ is $\mca P_0^{2/3}(2/3, 5/3]$ and any $\tau$-semistable objects in $\mca B$ is one of the following: 
\[
\left[\gamma \colon 0 \to z[1]\right], [\beta \colon y\to 0] \mbox{ and } [\1_x \colon x \to x]. 
\]
We note that the phases of $\gamma, \beta$ and $\1_x$ in $\tau$ are respectively $1$, $2/3$ and $1/3$. 

\textbf{Step 3}. 
Take the semiorthogonal decomposition $\ho{\ms C^{\Delta^{1}}}=\<\mb D_{1}^{L}, \mb D_{1}^{R}\>$. 
Similarly to the case of $d_0^*(\theta)\sigma$, the collection $\{ d_1^*(\theta) \sigma \}_{0 \leq \theta \leq 2/3}$ determines a path in $\Stab {\ho{\ms C^{\Delta^{1}}}}$. 
The second part of Lemma \ref{lm:classification} implies that 
any semistable object in $d_1^* (\sigma)$ is one of the following:
\[
\left\{
\left[0 	\to	x[1] \right], 
\left[y	\to 	0\right], 
[z \stackrel{\1_z}{\to}	z]
 \middle|
 x,y,z\in \mca A\mbox{ where $\sigma=(\mca A, Z)$}
\right\}. 
\]
Furthermore if $\theta = 2/3$ then the phases of $\left[0 \to	x[1] \right], [y \to 0] $ and $[z \stackrel{\1}{\to} z]$ are respectively $1$, $2/3$ and $1/3$. 
Hence $d_1^*(2/3)\sigma$ is the same as $\tau$ defined in Step 2. 
Thus we obtain a path connecting $d_1^* \sigma$ to $d_0^* \sigma$. 
\end{proof}

\begin{rmk}
The stability condition $\sigma$ taken in the proof above is not geometric. 
Namely skyscraper sheaves $\mca O_x$ is not $\sigma$-stable but $\sigma$-semistable. 
If the genus of a smooth projective curve is positive, such a non-geometric stability condition does not exist. 
\end{rmk}

The same argument in Theorem \ref{thm-main2} is effective for the distinguished full component $\Stabd {\mb D^{b}(\mb{coh}\, \bb P^2)}$ of the space of stability conditions on the bounded derived category $\mb D^{b}(\mb{coh}\, \bb P^2) \simeq \ho{\ms D_{\mr{coh}}^{b}(\bb P^{2})}$ of the projective plane $\bb P^2$. 
Namely we have

\begin{cor}\label{cor:Li}
Suppose an infinity category $\ms C$ is the infinity category $\ms D_{\mr{coh}}^{b}(\bb P^{2})$ (cf. Definition \ref{dfn:derivedmugen}).  
Let $\Stabd {\ho{\ms C}}$ be the distinguished full component of $\Stab {\ho{\ms C}} $ and 
let $d_0^*|_{\dagger}$ (resp. $d_1^*|_{\dagger}$) be the restriction of $d_0^*$ (resp. $d_1^*$) to $\Stabd {\ho{\ms C}}$. 
Then $\Im d_0^*|_{\dagger}$ is path connected to $\Im d_1^* | _{\dagger}$. 
\end{cor}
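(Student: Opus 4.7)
\textbf{Proof plan for Corollary \ref{cor:Li}.}
The plan is to imitate the proof of Theorem \ref{thm:P^1} using the Beilinson exceptional collection for $\bb P^{2}$ in place of the one for $\bb P^{1}$. First I would use the equivalence $\bb R\Hom_{\bb P^{2}}(\mca O \+ \mca O(1) \+ \mca O(2), -) \colon \ho{\ms C} \stackrel{\sim}{\to} \mb D^{b}(\mb{mod}\, R)$, where $R=\mr{End}_{\bb P^{2}}(\mca O \+ \mca O(1) \+ \mca O(2))$ is the path algebra of the three-vertex Beilinson quiver. Let $S_{0},S_{1},S_{2}$ be the simple $R$-modules. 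Choose an algebraic stability condition $\sigma=(\mca A, Z)$ with $\mca A=\mb{mod}\, R$ and $Z(S_{i})=-1$ for every $i$. Then $\sigma$ is full and reasonable, and its central charge takes values in $\bb R$, so $\sigma$ satisfies the condition $(\mr{Deg})$ from Definition \ref{dfn:construction}. Crucially, I would invoke the known fact that such algebraic stability conditions on $\bb P^{2}$ lie in the distinguished full component $\Stabd{\ho{\ms C}}$.

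Next, I observe that Definition \ref{dfn:construction}, Lemma \ref{lm:degenerate}, Lemma \ref{lm:semistable}, Lemma \ref{lm:classification}, and Corollary \ref{cr:torsionpiar} depend only on the stable infinity category structure of $\ms C$ and the condition $(\mr{Deg})$; they are not specific to $\bb P^{1}$. Applied to our $\sigma$, they produce a continuous family $\{d_{0}^{*}(\theta)\sigma\}_{\theta\in[0,2/3]}$ in $\Stabr{\ho{\ms C^{\Delta^{1}}}}$, and at $\theta=2/3$ the $\widetilde{\mr{GL}}_{2}^{+}(\bb R)$-action by $g_{2/3}$ gives a stability condition $\tau=(d_{0}^{*}(2/3)\sigma)\cdot g_{2/3}$. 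The identical phase computation as in Steps 2 and 3 of the proof of Theorem \ref{thm:P^1} shows that $\tau$ coincides with $d_{1}^{*}(2/3)\sigma$. Concatenating the path from $d_{0}^{*}\sigma=d_{0}^{*}(0)\sigma$ to $d_{0}^{*}(2/3)\sigma$ along this family, the $\widetilde{\mr{GL}}_{2}^{+}(\bb R)$-arc from $d_{0}^{*}(2/3)\sigma$ to $\tau$, and the reverse family from $d_{1}^{*}(2/3)\sigma=\tau$ to $d_{1}^{*}\sigma$, I obtain a continuous path in $\Stabr{\ho{\ms C^{\Delta^{1}}}}$ from $d_{0}^{*}\sigma$ to $d_{1}^{*}\sigma$.

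Finally, since $\Stabd{\ho{\ms C}}$ is a connected complex manifold (hence path connected), and $d_{0}^{*}|_{\dagger}$, $d_{1}^{*}|_{\dagger}$ are continuous by Proposition \ref{pr:pullback}, the images $\Im d_{0}^{*}|_{\dagger}$ and $\Im d_{1}^{*}|_{\dagger}$ are path connected. The path constructed above joins a point of $\Im d_{0}^{*}|_{\dagger}$ to a point of $\Im d_{1}^{*}|_{\dagger}$, which is enough. The main obstacle is the input that the algebraic $\sigma$ above actually lies in the distinguished component; this is a known result for $\bb P^{2}$ (going back to Arcara--Bertram--Coskun--Huizenga and developed by Li), but is not obvious and must be cited rather than re-derived.
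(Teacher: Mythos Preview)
Your proposal is correct and follows essentially the same approach as the paper's own proof: pick an algebraic stability condition on $\ho{\ms C}\simeq \mb D^{b}(\mb{mod}\, R)$ with all simples in phase $1$, note that it satisfies (Deg) and lies in $\Stabd{\ho{\ms C}}$ by Li's description of the distinguished component, and then rerun the argument of Theorem \ref{thm:P^1} verbatim. The only cosmetic difference is that the paper cites Li directly for the containment in $\Stabd{\ho{\ms C}}$, whereas you attribute it more broadly; also note that $R=\mr{End}_{\bb P^{2}}(\mca O\+\mca O(1)\+\mca O(2))$ is the Beilinson quiver algebra \emph{with relations}, not the free path algebra, though this does not affect the argument.
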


\begin{proof}
Due to Li \cite{MR3703470}, $\Stabd {\ho{\ms C}}$ is the union of algebraic stability conditions and geometric stability conditions. 
Since $\ho{\ms C} $ is equivalent to the bounded derived category of representations of the quiver $Q =\xymatrix{\bullet \ar@<1ex>[r] \ar[r] \ar@<-1ex>[r]  & \bullet \ar@<1ex>[r] \ar[r] \ar@<-1ex>[r]  & \bullet}$. 
Thus there exists $\sigma \in \Stabd {\ho{\ms C}}$ such that any simple module in the abelian category $\mb{mod}( \mb k Q)$ of the path algebra $\mb k Q$ is 
$\sigma$-stable in phase $1$. 
Since the stability condition $\sigma $ satisfies the condition (Deg), the same argument in Theorem \ref{thm:P^1} works. 
\end{proof}

\section{Another construction of $\ho{\ms C^{\Delta^{1}}}$}


Let $\mr {Mor}(\mca B)$ be the category of morphisms in an abelian category $\mca B$. 
The category $\mr{Mor}(\mca B)$ is tautologically the same as the category of functors from the ordinal $[1]=\{0 < 1	\}$ to $\mca B$. 
Since $\mr{Mor}(\mca B)$ is also an abelian category, 
one could define the bounded derived category $\mb D^{b}(\mr{Mor}(\mca B))$ by the localization of quasi-isomorphisms. 
When $\mca B$ is the abelian category $\mb{coh}(X)$ of coherent sheaves on $X$, 
we show that the derived category $\mb D^{b}(\mr{Mor}(\mca B))$ is equivalent to the homotopy category $\ho{\ms D_{\mr{coh}}^{b}(X)^{\Delta^{1}}}$ in Corollary \ref{pr:MMR} below.

\begin{prop}\label{pr:eqF}
Suppose that $\mca A$ is a Grothendieck abelian category. 
Then the unbounded derived category $\mb D (\mr{Mor}(\mca A))$ is equivalent to the homotopy category $\ho{\ms D(\mca A)^{\Delta^{1}}}	$ of the infinity category $\ms D(\mca A)^{\Delta^{1}}$
\end{prop}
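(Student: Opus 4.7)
The plan is to prove the stronger statement that the underlying stable infinity categories $\ms D(\mr{Mor}(\mca A))$ and $\ms D(\mca A)^{\Delta^1}$ are equivalent; passing to homotopy categories then yields the claim. First observe that $\mr{Mor}(\mca A) = \mca A^{[1]}$ is itself a Grothendieck abelian category (being the functor category from a small category into a Grothendieck abelian one), so $\ms D(\mr{Mor}(\mca A)) = \mr{N}_{\mr{dg}}(\mr{Ch}(\mr{Mor}(\mca A))^{\circ})$ is defined via Definition \ref{dfn:derivedmugen}.

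The main input is the compatibility of the dg-nerve with arrow categories: for any dg-category $\ms D$ there is a natural equivalence of infinity categories
\[
\mr{N}_{\mr{dg}}(\ms D)^{\Delta^{1}} \simeq \mr{N}_{\mr{dg}}\left( \mr{Mor}(\ms D) \right),
\]
where $\mr{Mor}(\ms D)$ carries its natural dg structure. This is a special case of the general compatibility of the dg-nerve with cotensoring by ordinary categories, which can be extracted from the material of \cite[\S 1.3.1]{higheralgebra}. Applying this fact to $\ms D = \mr{Ch}(\mca A)^{\circ}$ yields $\ms D(\mca A)^{\Delta^1} \simeq \mr{N}_{\mr{dg}}(\mr{Mor}(\mr{Ch}(\mca A)^{\circ}))$.

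The remaining step is to identify $\mr{Mor}(\mr{Ch}(\mca A)^{\circ})$ with $\mr{Ch}(\mr{Mor}(\mca A))^{\circ}$. There is a tautological isomorphism of dg-categories $\mr{Mor}(\mr{Ch}(\mca A)) \cong \mr{Ch}(\mr{Mor}(\mca A))$, since both sides describe the same data: a $\bb Z$-indexed family of morphisms $f^p \colon X^p \to Y^p$ in $\mca A$ together with compatible differentials. To match fibrant-cofibrant objects on the two sides, note first that cofibrancy is automatic in either injective model structure. For the fibrant part, I would observe that the evaluation functors $d_0, d_1 \colon \mr{Mor}(\mca A) \to \mca A$ are exact and admit exact left adjoints $j_!(x)=[x \to 0]$ and $j_*(x)=[0 \to x]$, inducing Quillen adjunctions on the injective model structures, and then use this to show that a chain complex in $\mr{Mor}(\mca A)$ is injectively fibrant precisely when both its source and target complexes are injectively fibrant in $\mr{Ch}(\mca A)$. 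Concatenating the identifications yields
\[
\ms D(\mca A)^{\Delta^1} \simeq \mr{N}_{\mr{dg}}\!\left(\mr{Ch}(\mr{Mor}(\mca A))^{\circ}\right) = \ms D(\mr{Mor}(\mca A)).
\]

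The principal obstacle will be the model-theoretic verification just sketched: termwise K-injectivity in the arrow category $\mr{Ch}(\mr{Mor}(\mca A))$ is a subtle condition to relate to K-injectivity of the source and target separately, and a rigorous argument needs a careful unpacking of the right lifting property against trivial cofibrations of arrows together with an explicit description of the injective objects in $\mr{Mor}(\mca A)$ (which turn out to be retracts of objects of the form $[I_0 \to I_1]$ with $I_0, I_1$ injective in $\mca A$).
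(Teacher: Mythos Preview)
Your strategy is more conceptual than the paper's: you aim for an equivalence of the underlying stable $\infty$-categories, whereas the paper builds an explicit functor $\Phi \colon \mr{Ho}(\K(\mr{Mor}(\mca A))) \to \ho{\ms D(\mca A)^{\Delta^1}}$ and verifies essential surjectivity, fullness, and faithfulness by hand using factorizations and lifting in the injective model structure. Your route would be cleaner if it worked, but the step you flag as the ``principal obstacle'' is in fact false as stated.

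The claim that an object of $\mr{Ch}(\mr{Mor}(\mca A))$ is injectively fibrant precisely when its source and target are injectively fibrant in $\mr{Ch}(\mca A)$ holds only in one direction. The evaluations $d_0,d_1$ are right Quillen (their left adjoints are $j_*$ and $s$, not $j_!$ as you write; $j_!$ is a \emph{right} adjoint of $d_1$), so fibrant arrows have fibrant source and target, and the paper uses exactly this. The converse fails: for a nonzero injective $I\in\mca A$, the complex $j_*(I)=[0\to I]$ concentrated in degree~$0$ has fibrant source and target, yet is not fibrant in $\mr{Ch}(\mr{Mor}(\mca A))$ because $j_*(I)$ is not injective in $\mr{Mor}(\mca A)$ (the monomorphism $(0,\1_I)\colon j_*(I)\hookrightarrow s(I)$ does not split). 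More generally, the injective objects of $\mr{Mor}(\mca A)$ are not arbitrary arrows between injectives as you suggest, but split epimorphisms $[I_0\oplus I_1\to I_1]\cong s(I_1)\oplus j_!(I_0)$. Hence $\mr{Ch}(\mr{Mor}(\mca A))^\circ$ is a \emph{proper} full dg-subcategory of $\mr{Mor}(\mr{Ch}(\mca A)^\circ)$, and your chain of identifications breaks.

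One could attempt a repair by showing that this inclusion is nevertheless a quasi-equivalence of dg-categories, but checking essential surjectivity in $H^0$ amounts to proving that every arrow between fibrant complexes is chain-homotopy equivalent, via strictly commuting squares, to an injectively fibrant arrow. That is precisely the content of the paper's Steps~2 and~3, so the conceptual shortcut does not bypass the work. Separately, the equivalence $\mr{N}_{\mr{dg}}(\ms D)^{\Delta^1}\simeq \mr{N}_{\mr{dg}}(\mr{Mor}(\ms D))$ is not something one can simply read off from \cite[\S 1.3.1]{higheralgebra}; the two simplicial sets differ already at the level of $1$-simplices (homotopy-commuting versus strictly commuting squares), and establishing that the comparison map is a categorical equivalence requires its own argument.
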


\begin{proof}
Throughout the proof we identify $\K(\mr{Mor}(\mca A))$ with $\mr{Mor}(\K (\mca A))$. 
Since $\mca A$ is the Grothendieck abelian category, so is $\mr{Mor}(\mca A)$ by \cite{MR102537}. 
Thus the derived category $\mb D(\mr{Mor}(\mca A))$ is obtained by the homotopy category $\mr{Ho}(\K (\mr{Mor}(\mca A)))$ with respect to the injective model structure on $\K(\mr{Mor}(\mca A))$. 
Moreover $\K(\mca A)$ is also the injective model category by Proposition-Definition \ref{dfn:injetivemodel}. 

We wish to define a functor $\Phi \colon \mr{Ho}(\K(\mr{Mor}(\mca A))) \to \ho{ \ms D(\mca A)^{\Delta^{1}}	}$ which gives an equivalence. 
The proof consists of 3 steps. 
In Step 1, we define a functor $\Phi $ and show that $\Phi $ is essentially surjective. 
We next show that $\Phi$ is full in Step 2 and show that $\Phi$ is faithful in Step 3. 

\textbf{Step 1}. 
Let $f$ be in $\K(\mr{Mor}(\mca A))=\mr{Mor}(\K(\mca A))$. 
Then $f$ is given by the form $[f\colon x \to y]$ where $x $ and $y$ are in $\K(\mca A)$. 
There exist functors between $\K(\mr{Mor} (\mca A))$ and $\K(\mca A)$: 
\begin{itemize}
\item $j_{*} \colon \K(\mca A) \to \K(\mr{Mor}(\mca A)); j_{*}(z)=[0 \to z]$, 
\item $d_{0} \colon \K(\mr{Mor}(\mca A)) \to \K(\mca A); d_{0}([f \colon x \to y])=y$, 
\item $s \colon \K(\mca A) \to \K(\mr{Mor}(\mca A)); s(z)=[\1 \colon z \to z]$, and 
\item $d_{1} \colon \K(\mr{Mor}(\mca A)) \to \K(\mca A); d_{1}([f \colon x \to y])=x$. 
\end{itemize}
Moreover $j_{*}$ is the left adjoint of $d_{0}$ and $s$ is also the left adjoint of $d_{1}$. 
Since $j_{*}$ and $s$ preserve trivial cofibrations, both $d_{0}$ and $d_{1}$ preserve fibrations. 
%
%
%
Hence, if $f$ is fibrant in $\K(\mr{Mor}(\mca A))$, then both $d_{1}f=x $ and $d_{0}f=y$ are fibrant in $\K(\mca A)$. 
Hence $f$ gives  an object in $\ho{\ms D(\mca A)^{\Delta^{1}}}$ and we define $\Phi(f) $ by $f$ it self.

We next define $\Phi([\rho ])$ for a morphism $[\rho] \colon f \to g $ in the category $\mr{Ho}(\K(\mr{Mor}(\mca A)))$. 
Put $d_{1}g=z$ and $d_{0}g=w$. 
Recall that the morphism $[\rho]$ is a chain homotopy class of a morphism $\rho \colon f \to g$ in the category $\K(\mr{Mor}(\mca A))$ and that $\rho$ is given by a pair $(\rho_{1},\rho_{0} )$ of morphisms $ \rho_{1} \colon x \to y$ and $\rho_{0} \colon y \to w$ in $\K(\mca A)$ satisfying $\rho_{0}f = g \rho_{1}$. 
On the other hand, a morphism 
$\Phi (f) \to \Phi(g)$ 
in $\ho{\ms D(\mca A)^{\Delta^{1}}}$ is the equivalence class of a 5-tuple $\varphi =(\tau_{1}, \tau_{0}, \psi, h_{1}, h_{0})$ satisfying the following relations 
(see also the proof of Proposition \ref{pr:k-linear}): 
\begin{itemize}
\item $\tau_{1} \in  Z^{0}_{\mr{Ch}(\mca A)}(x, z)$, $\tau_{0} \in Z^{0}_{\mr{Ch}(\mca A)}(y,w)$ , $\psi \in Z^{0}_{\mr{Ch}(\mca A)}(x, w) $, 
\item  $h_{1} \in \Map_{\mr{Ch}(\mca A)}^{-1}(x, w)$  with $\delta (h_{1})=g\tau_{1}-\psi$, and 
\item  $h_{0} \in \Map_{\mr{Ch}(\mca A)}^{-1}(x, w)$  with $\delta( h_{0})=\tau_{0}f-\psi $. 
\end{itemize}

For the morphism $\rho \colon f \to g$ in $\K(\mr{Mor}(\mca A))$, let us denote by $\phi_{\rho}$ the $5$-tuple $(\rho_{1}, \rho_{0}, \rho_{0}f, 0,0)$ which gives a morphism in $\ms D(\mca A)^{\Delta^{1}}$. 
We wish to define $\Phi([\rho])$ by the equivalence class $[\phi_{\rho}]$. 
To complete the definition of $\Phi$, 
let $\tilde {\rho}= (\tilde{\rho}_{1}, \tilde {\rho}_{0})$ be a morphism in $\K(\mr{Mor}(\mca A))$ which is chain homotopic to the morphism $\rho$. 
Then there exists a morphism $\eta \in \Map_{\mr{Ch}(\mr{Mor}(\mca A))}^{-1}(f,g)$ such that $\delta (\eta )= \rho- \tilde {\rho}$. 
In particular the morphism $\eta$ is given by a pair $(\eta_{1}, \eta_{0})$ of morphisms $\eta_{1} \in \Map_{\mr{Ch}(\mca A)}^{-1}(x,z)$ and 
$\eta_{0} \in \Map_{\mr{Ch}(\mca A)}^{-1}(y,w)$ satisfying $\eta_{0}f=g \eta_{1}$ and 
 $\delta (\eta _{i})= \rho_{i} - \tilde {\rho}_{i}$ for $i \in \{0, 1\}$. 
By the relation $\eta_{0}f=g \eta_{1}$ and the morphism $\eta$, one can  find a $7$-tuple which gives an equivalence between $\phi_{\tilde {\rho}}$ and $\phi_{\rho}$. 
Thus we can define $\Phi([\rho])$ by the equivalence class $[\phi_{\rho}]$ and obtain the functor $\Phi$: 
\[
\Phi \colon \mr{Ho}(\K(\mr{Mor}(\mca A))) \to \ho{ \ms D(\mca A)^{\Delta^{1}} }	;  
\Phi(f)=f \mbox{ and } \Phi(\rho)=[\phi_{\rho}]. 
\]

Take an object $f \in \ho {\ms D(\mca A)^{\Delta^{1}}}$. 
Then $f$ can be regarded as an object in $\K(\mr{Mor}(\mca A))$. 
By the fibrant replacement $f^{\sharp}$ of $f$ in $\K(\mr{Mor}(\mca A))$, $\Phi(f^{\sharp})=f^{\sharp}$ is isomorphic to $f$ in $\ho{\ms D(\mca A)^{\Delta^{1}}}$. 
Hence the functor $\Phi$ is essentially surjective.

\textbf{Step 2}. 
We claim that any morphism $[\varphi] \colon \Phi(f) \to \Phi(g)$ in $\ho{\ms D(\mca A)^{\Delta^{1}}}$ is given by $\Phi ([\rho])$ for some 
$[\rho] \colon f\to g \in \mr{Ho}(\K(\mr{Mor}(\mca A)))$.  
To see this, let $\varphi=(\tau_{1}, \tau_{0}, \psi, h_{1}, h_{0})$ be a morphism $\varphi \colon f \to g$ in the infinity category $\ms D(\mca A)^{\Delta^{1}}$. 
Then $\tau_{0}  f $ and $g  \tau_{1}$ are chain homotopic since the relation $\delta (h_{0}-h_{1})=\tau_{0}  f- g  \tau_{1}$ holds. 

Take a factorization of the morphism $g$ in $\K (\mca A)$ as 
\begin{equation}\label{eq:factorization}
\xymatrix{
z	\ar@{^{(}->}[r]^{g_{\mr{tc}}}	&	\tilde z	\ar@{->>}[r]^{g_{\mr b}}	&	w
}, 
\end{equation}
where $g_{\mr{tc}}$ is a trivial cofibration and $g_{\mr b}$ is a fibration in $\K (\mca A)$. 
Since $\tau_{0}f$ and $g \tau_{1}$ are chain homotopic, 
we obtain the following commutative digram in $\K(\mca A)$ by using the cylinder object $C(x)$ of $x$:
\[
\xymatrix{
x	\ar[r]^-{i}	\ar[d]_{f}	&	C(x)	\ar[d]_{H}	\ar@{-->}[rd]|{\tilde H}&	x	\ar[l]_-{j}\ar[d]^{g_{\mr{tc}}\cdot \tau_{1}}	&  \ar@{=}[l]	x	\ar[d]^{\tau_{1}}	\\
y	\ar[r]_{\tau_{0}}	&	w			&	\tilde z	\ar[l]^{g_{\mr b}}	&	z	\ar[l]^{g_{\mr{tc}}}. 
}
\]
Note that $j$ is a trivial cofibration in $\K(\mca A)$. 
Since $g_{\mr b}$ is a fibration, there exists a morphism $\tilde H \colon C(x) \to \tilde z$ as indicated rendering the diagram commutative in $\K(\mca A)$. 
Thus there exists a morphism $h \in \Map_{\mr{Ch}(\mca A)}^{-1}(x, \tilde z)$ such that $g_{\mr b} h =h_{0}-h_{1}$. 

Put $\tilde \tau_{1}= g_{\mr {tc}} \tau_{1}+ \delta (h)$ where $\delta \colon \Map_{\mr{Ch}(\mca A)}^{-1}(x, \tilde z) \to \Map_{\mr{Ch}(\mca A)}^{0}(x, \tilde z)$. 
Then we obtain the commutative diagram in $\K(\mca A)$ which represents morphisms $f \rightarrow g_{\mr b}$ and $g_{\mr b} \leftarrow g$ in $\mr{Mor}(\K(\mca A))=\K(\mr{Mor}(\mca A))$: 
\[
\xymatrix{
x	\ar[r]^{\tilde{\tau_{1}}}\ar[d]_{f}	&	\tilde z	\ar[d]_{g_{\mr b}}	&	z	\ar[l]_{g_{\mr{tc}}}\ar[d]_{g}	\\
y	\ar[r]_{\tau_{0}}	&	w	&	\ar@{=}[l]	w. 
}
\]
Let $(g_{\mr{tc}}, \1_{w}) \colon g \to g_{\mr b}$ be the morphism indicated in the digram above. 
Since the object $g$ is fibrant and the morphism $(g_{\mr{tc}}, \1_{w})$ is a trivial cofibration, the lifting property in $\K(\mr{Mor}(\mca A))$ implies 
a morphism 
\begin{equation}\label{eq:retract}
(r_{1}, r_{0}) \colon g_{\mr b} \to g
\end{equation}
satisfying $(r_{1}, r_{0}) \circ (g_{\mr {tc}}, \1_{w})=\1_{g}$. 
Since $r_{0}=\1_{w}$, one can check that $\rho=(r_{1} \tilde \tau_{1}, \tau_{0})$ satisfies $\Phi([\rho])=[\varphi]$. 

\textbf{Step 3}. 
Let $\rho=(\rho_{1}, \rho_{0})$ be a morphism $\rho \colon f \to g$ for the objects 
$[f \colon x \to y]$ and $[ g \colon z \to w] \in \mr{Ho}(\K(\mr{Mor}(\mca A)))$. 
We are going to show that $\rho$ is null homotopic if $\Phi([\rho])$ is zero. 
The procedure is to find a chain homotpy, that is a pair $(h_{1}, h_{0} ) \in \Map_{\mr{Ch}(\mca A)}^{-1}(x,z) \times \Map_{\mr{Ch(\mca A)}}^{-1}(y,w)$ satisfying 
$\delta (h_{1})=\rho_{1}$, $\delta (h_{0})=\rho_{0}$, and $g  h_{1}=h_{0}  f$.


Unwinding the definitions of $\phi_{\rho}$ and of $\ho{\ms D(\mca A)^{\Delta^{1}}}$, 
there exist morphisms 
$h_{\overline{012}} \in \Map_{\mr{Ch}(\mca A)}^{-1}(x,z)$, 
$h_{\underline{012}} \in \Map_{\mr{Ch}(\mca A)}^{-1}(y,w)$, and 
$h_{\overline{0}**\underline{2}} \in \Map_{\mr{Ch}(\mca A)}^{-2}(x,w)=\Map_{\mr{Ch}(\mca A)}^{-1}(x,w[-1]) $ satisfying 
$\delta (h_{\overline{012}})= \rho_{1}$, $\delta (h_{\underline{012}})=\rho_{0}$, and 
$\delta (h_{\overline{0}**\underline{2}})= h_{\underline{012}}  f - g   h_{\overline{012}}$. 
Put $I=h_{\underline{012}}  f - g   h_{\overline{012}}$. 
Then the morphism $I$ is in $Z^{-1}_{\mr{Ch}(\mca A)}(x,w) = Z^{0}_{\mr{Ch}(\mca A)}(x,w[-1]) $ and is null homotopic by $h_{\overline{0}**\underline{2}}$. 
By the factorization of $g$ as in (\ref{eq:factorization}), we have the following commutative diagram in $\K(\mca A)$:  
\[
\xymatrix{
x	\ar[r]^-{i}	\ar[rd]_{I}	&	C(x)	\ar[d]	&	x	\ar[l]_-{j}\ar[d]^{0}	&  \ar@{=}[l]	x	\ar[d]^{0}	\\
&	w[-1]			&	\tilde z[-1]	\ar[l]^{g_{\mr b}[-1]}	&	z[-1]	\ar[l]^{g_{\mr{tc}}[-1]}. 
}
\]
Similarly as in Step 2, since $g_{\mr b}[-1]$ is a fibration, 
there exists a morphism $\tilde h \in \mr{Map}_{\mr{Ch}(\mca A)}^{-1}(x,\tilde z[-1])=\mr{Map}_{\mr{Ch}(\mca A)}^{-2}(x, \tilde z)$ such that $g_{\mr b}  \tilde h=h_{\overline{0}**\underline{2}}$.

The pair $(g_{\mr{tc}}  h_{\overline{012}} + \delta (\tilde h), h_{\underline{012}})$ satisfies $g_{\mr b}\left(	g_{\mr{tc}}  h_{\overline{012}} + \delta (\tilde h)	\right) = h_{\underline{012}} f$. 
Using the morphism $r_{1}$ in (\ref{eq:retract}), put $h_{1}=  h_{\overline{012}} + r_{1}  \delta (\tilde h)$ and $h_{0}=h_{\underline{012}}$. 
Then the pair $(h_{1}, h_{0})$ gives a desired chain homotopy for $\rho \colon f\to g$. 
\end{proof}

\begin{cor}\label{pr:MMR}
Suppose $\mca B$ is the abelian category $\mb{coh}(X)$ of coherent sheaves on a Noetherian scheme $X$. 
The bounded derived category $\mb D^{b}\left(\mr{Mor}\left(\mca B \right) \right)$ of $\mr{Mor}(\mca B)$ is equivalent to the homotopy category $\ho{\ms D_{\mr{coh}}^{b}(X)^{\Delta^{1}}}$. 
\end{cor}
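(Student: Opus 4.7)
The plan is to deduce the corollary directly from Proposition \ref{pr:eqF} by restricting the established equivalence to the appropriate full subcategories on both sides. Since $X$ is Noetherian, $\mb{Qcoh}(X)$ is a Grothendieck abelian category, so Proposition \ref{pr:eqF} produces an equivalence
\[
\Phi \colon \mb D(\mr{Mor}(\mb{Qcoh}(X))) \xrightarrow{\ \sim\ } \ho{\ms D(X)^{\Delta^{1}}}.
\]
It suffices to identify the respective full subcategories $\mb D^{b}(\mr{Mor}(\mb{coh}(X)))$ and $\ho{\ms D_{\mr{coh}}^{b}(X)^{\Delta^{1}}}$ inside the two sides and to check that $\Phi$ matches them.

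On the infinity-categorical side, $\ho{\ms D_{\mr{coh}}^{b}(X)^{\Delta^{1}}}$ consists by definition of those morphisms $[f \colon x \to y]$ for which both $x$ and $y$ lie in $\ms D_{\mr{coh}}^{b}(X)$; equivalently, it is the full preimage of $\ho{\ms D_{\mr{coh}}^{b}(X)} \times \ho{\ms D_{\mr{coh}}^{b}(X)}$ under the pair of functors $(d_{1}, d_{0})$. On the derived-category side, $\mr{Mor}(\mb{coh}(X))$ is a Serre subcategory of $\mr{Mor}(\mb{Qcoh}(X))$ closed under subobjects and quotients, so the standard theorem gives a fully faithful embedding $\mb D^{b}(\mr{Mor}(\mb{coh}(X))) \hookrightarrow \mb D(\mr{Mor}(\mb{Qcoh}(X)))$ whose essential image is the bounded complexes whose cohomology lies in $\mr{Mor}(\mb{coh}(X))$. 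Since kernels and cokernels in $\mr{Mor}(\mb{Qcoh}(X))$ are formed componentwise, a complex $[f^{\bullet} \colon x^{\bullet} \to y^{\bullet}]$ has bounded, coherent cohomology in $\mr{Mor}(\mb{Qcoh}(X))$ if and only if both $x^{\bullet}$ and $y^{\bullet}$ have bounded, coherent cohomology in $\mb{Qcoh}(X)$.

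From the explicit construction of $\Phi$ in the proof of Proposition \ref{pr:eqF}, the equivalence sends an object $[f \colon x \to y]$ to the same datum viewed in $\ms D(X)^{\Delta^{1}}$, and the functors $d_{0}, d_{1}$ on the infinity-categorical side correspond to the target and source functors on $\mr{Mor}(\mb{Qcoh}(X))$. Consequently the two descriptions above of ``bounded source and target with coherent cohomology'' agree, and the restriction of $\Phi$ gives the desired equivalence
\[
\mb D^{b}(\mr{Mor}(\mb{coh}(X))) \xrightarrow{\ \sim\ } \ho{\ms D_{\mr{coh}}^{b}(X)^{\Delta^{1}}}.
\]
The main technical point to verify carefully is the classical identification of $\mb D^{b}(\mca C)$ with the bounded $\mca C$-cohomological subcategory of $\mb D(\mca A)$ when $\mca C \subset \mca A$ is a Serre subcategory of a Grothendieck abelian category, applied here both to $\mb{coh}(X) \subset \mb{Qcoh}(X)$ and to $\mr{Mor}(\mb{coh}(X)) \subset \mr{Mor}(\mb{Qcoh}(X))$; once these standard facts are invoked, the corollary follows immediately from Proposition \ref{pr:eqF}.
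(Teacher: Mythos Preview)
Your overall strategy coincides with the paper's: apply Proposition~\ref{pr:eqF} to $\mca A=\mb{Qcoh}(X)$, identify $\ho{\ms D_{\mr{coh}}^{b}(X)^{\Delta^{1}}}$ with the full subcategory $\mb D^{b}_{\mca B}(\mr{Mor}(\mca A))\subset \mb D(\mr{Mor}(\mca A))$ of objects whose source and target have bounded coherent cohomology, and then identify the latter with $\mb D^{b}(\mr{Mor}(\mca B))$. The first two steps are fine and match the paper.

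The gap is in the last step. You write that $\mr{Mor}(\mca B)\subset \mr{Mor}(\mca A)$ being a Serre subcategory implies, by a ``standard theorem'', that $\mb D^{b}(\mr{Mor}(\mca B))\hookrightarrow \mb D(\mr{Mor}(\mca A))$ has essential image $\mb D^{b}_{\mca B}(\mr{Mor}(\mca A))$. Being a Serre subcategory is \emph{not} sufficient for this comparison; the usual criterion requires in addition that every epimorphism $g\twoheadrightarrow f$ in $\mr{Mor}(\mca A)$ with $f\in\mr{Mor}(\mca B)$ factors through a subobject $h\subset g$ with $h\in\mr{Mor}(\mca B)$. For $\mb{coh}(X)\subset\mb{Qcoh}(X)$ this is classical, but it does not formally transfer to the morphism category: given an epi $g\to f$ one can choose coherent subsheaves $z\subset d_{1}g$ and $w\subset d_{0}g$ surjecting onto $d_{1}f$ and $d_{0}f$, yet $g$ need not carry $z$ into $w$. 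The paper handles exactly this point by replacing $w$ with the coherent subsheaf $\tilde w\subset d_{0}g$ generated by $w$ and $\im(g|_{z})$, producing $h=[z\to\tilde w]\in\mr{Mor}(\mca B)$ with $h\twoheadrightarrow f$; it then invokes \cite[Theorem~B]{MR3989131} to conclude. You should supply this verification (or an equivalent one) rather than absorb it into an appeal to Serre subcategories.
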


\begin{proof}
Let $\mca A$ be the abelian category $\mb{Qcoh}(X)$ of quasi-coherent sheaves on $X$ and let 
\[
\Phi \colon \mr{Ho}(\K(\mr{Mor}(\mca A))) \to \ho{\ms D(\mca A)^{\Delta^{1}}}
\]
be the functor constructed in the proof of Proposition \ref{pr:eqF}. 

Define $\mb D^{b}_{\mca B}(\mr{Mor}(\mca A))$ to be the full subcategory of $\mb D(\mr{Mor}(\mca A)) = \mr{Ho}(\K(\mr{Mor}(\mca A)))$ consisting of the bounded complexes with coherent cohomologies: 
\[
\mb D^{b}_{\mca B}(\mr{Mor}(\mca A)) =\{	[f\colon x \to y ] \in \mb D(\mr{Mor}(\mca A)) \mid 
x \mbox{ and }y \in \ms D_{\mr{coh}}^{b}(X)	\}. 
\]

Then $\mb D^{b}_{\mca B}(\mr{Mor}(\mca A))$ is equivalent to $\ho{\ms D_{\mr{coh}}^{b}(\mca A)^{\Delta^{1}}}$ via the functor $\Phi$.
Thus it is enough to show that $\mb D_{\mca B}^{b}(\mr{Mor}(\mca A))$ is equivalent to $\mb D^{b}(\mr{Mor}(\mca B))$. 

Let $g \to f$ be an epi morphism in $\mr{Mor}(\mca A)$ such that $f \in \mr{Mor}(\mca B)$ and $g \in \mr{Mor}(\mca A)$. 
Then there exists a subobject $z \subset d_{1}g$ and $w \subset d_{0}g$ such that the composites $z \to d_{1}g \to d_{1}f$ and $w \to d_{0}g\to d_{0}f$ are epi and $z$ and $w$ are in $\mca B$. 
Let $\tilde w$ be the subobject of $d_{0}g$ generated by $\im (g|_{z})$ and $w$. 
Then a morphism $h \colon z \stackrel{g}{\to} \tilde w$ is a subobject of $g$ in $\mr{Mor}(\mca B)$, 
and the composite $h \to g \to f$ is also an epi morphism. 
Hence we see that the natural functor $\mb D^{b}(\mr{Mor}(\mca B)) \to \mb D^{b}_{\mca B}(\mr{Mor}(A))$ is essentially surjective. 
Then the functor gives an equivalence by \cite[Theorem B]{MR3989131}. 
\end{proof}

\begin{rmk}\label{rmk:relevant}
Keep the notation as in Corollary \ref{pr:MMR}.  
\begin{enumerate}
\item Suppose that $X$ is a smooth projective curve $C$ over $\bb C$. 
Then $\mb D^{b}(\mr{Mor}(\mca B))$ is the same as the category $\mca T_{C}$ discussed in \cite{2019arXiv190504240M}. 
Hence our category $\ho{\ms D_{\mr{coh}}^{b}(C)^{\Delta^{1}}}$ is equivalent to $\mca T_{C}$. 
Consequently, Proposition \ref{pr:BK} gives an answer to \cite[Conjecture 3.17]{2019arXiv190504240M} related with the Serre functor on $\mca T_{C}$. 
\item 
If $X$ is the affine scheme $\Spec \mb k$ of a field $\mb k$, then $\mr{Mor}(\mca B)$ is nothing but the abelian category $\mb{mod} (\bullet \to \bullet )$ of finite dimensional representations of the $A_{2}$-quiver $\bullet \to \bullet $. 
Hence our category $\ho{\ms D_{\mr{coh}}^{b}(\Spec \mb k)^{\Delta^{1}}}$ is equivalent to the bounded derived category $\mb D^{b}(\bullet \to \bullet)$ of the abelian category $\mb{mod} (\bullet \to \bullet )$. 
Thus the homotopy category $\ho{\ms D_{\mr{coh}}^{b}(X)^{\Delta^{1}}}$ is a generalization of the bounded derived category $\mb D^{b}(\bullet \to \bullet)$ of quiver representations. 
\item From a geometrical view, a quiver is nothing but a simplicial set $K$ whose arbitrary $n$-simplices degenerate for $n>1$. 
Let $\ms D_{\mr{coh}}^{b}(X)^{K}=\Fun{K}{\ms D_{\mr{coh}}^{b}(X)}$ be the infinity category of maps from any simplicial set $K$ to the infinity category $\ms D_{\mr{coh}}^{b}(X)$. 
Then $\ms D_{\mr{coh}}^{b}(X)^{K}$ can be regarded as a further generalization of the bounded derived category of quiver representations. 
It might be interesting to study $\ms D_{\mr{coh}}^{b}(X)^{K}$ 
from a perspective of the representation theory. 
\end{enumerate}
\end{rmk}


\end{document}